\documentclass[11pt,reqno]{amsart}

\usepackage[T1]{fontenc}
\usepackage[utf8]{inputenc}
\usepackage{lmodern}
\usepackage{microtype}
\usepackage{mathtools}
\usepackage{amssymb,amsfonts}
\usepackage{amscd}
\usepackage{enumitem}
\usepackage{xcolor}
\usepackage{booktabs}
\usepackage{hyperref}
\usepackage[nameinlink,capitalise,noabbrev]{cleveref}

\hypersetup{
  colorlinks=true,
  linkcolor=blue!55!black,
  citecolor=green!45!black,
  urlcolor=blue!65!black,
  pdftitle={On Cap-Decorated Floer Persistence},
  pdfauthor={Wenmin Gong}
}

\setlist[itemize]{leftmargin=2em,itemsep=2pt,topsep=4pt}
\setlist[enumerate]{leftmargin=2.2em,itemsep=2pt,topsep=4pt}
\newtheorem{theorem}{Theorem}[section]
\newtheorem{proposition}[theorem]{Proposition}
\newtheorem{lemma}[theorem]{Lemma}
\newtheorem{corollary}[theorem]{Corollary}
\theoremstyle{definition}
\newtheorem{definition}[theorem]{Definition}
\newtheorem{example}[theorem]{Example}
\theoremstyle{remark}
\newtheorem{remark}[theorem]{Remark}

\newcommand{\K}{\mathbb K}
\newcommand{\R}{\mathbb R}
\newcommand{\N}{\mathbb N}
\newcommand{\Z}{\mathbb Z}
\newcommand{\Spec}{\operatorname{Spec}}
\newcommand{\Ham}{\operatorname{Ham}}
\newcommand{\Powers}{\operatorname{Powers}}
\newcommand{\im}{\operatorname{im}}
\newcommand{\id}{\operatorname{id}}
\newcommand{\cl}{\operatorname{cl}}
\newcommand{\mcl}{\operatorname{mcl}}
\newcommand{\PCL}{\operatorname{PCL}}
\newcommand{\depth}{\operatorname{depth}}
\newcommand{\distE}{d_{\mathrm E}}

\newcommand{\distH}{d_{\mathrm H}}

\newcommand{\barc}{\operatorname{barc}}
\newcommand{\dgm}{\operatorname{dgm}}
\newcommand{\cA}{\mathcal A}

\newcommand{\cV}{\mathcal V}
\newcommand{\cW}{\mathcal W}
\newcommand{\eps}{\varepsilon}
\newcommand{\frownH}{\mathbin{\frown_H}}
\newcommand{\smilecup}{\mathbin{\smile}}
\newcommand{\normHofer}[1]{\lVert #1\rVert}
\newcommand{\mideal}{\mathfrak m}
\newcommand{\CapLayer}{\operatorname{Cap}}
\newcommand{\Bcap}{\mathcal B^{\mathrm{cap}}}
\newcommand{\Dcap}{\mathfrak D_{\mathrm{cap}}}
\newcommand{\PD}{\operatorname{PD}}
\newcommand{\supp}{\operatorname{supp}}
\newcommand{\Symp}{\operatorname{Symp}}
\newcommand{\OrbHam}{\mathcal O_{\mathrm{Ham}}}
\newcommand{\dHamConj}{d_{\mathrm H}^{\mathrm{Ham\text{-}conj}}}
\newcommand{\Csymp}[2]{\mathcal C_{#1}(#2)}
\newcommand{\DeltaCap}{\Delta^{\mathrm{cap}}}
\newcommand{\QHam}{\mathcal Q_{\mathrm{Ham}}}
\newcommand{\ProfileCap}{\mathfrak P_{\mathrm{cap}}}

\title[On Cap-Decorated Floer Persistence Modules]{On Cap-Decorated Floer Persistence Modules}
\author{Wenmin Gong}
\date{}

\address{School of Mathematical Sciences, Beijing Normal University,
	Beijing, 100875, China}

\email{ wmgong@bnu.edu.cn}

\begin{document}

\begin{abstract}
For a closed symplectically aspherical manifold, we introduce
cap-decorated Floer persistence modules by refining the action-filtered Hamiltonian
Floer persistence module with layers defined by iterated actions of homogeneous
cohomology ideals. We prove functoriality and stability of these layers, and
construct a universal cap-profile pseudodistance bounded above by Hofer
distance. As an application, we obtain new invariants of Hamiltonian
conjugacy classes, detecting symplectic mapping-class displacement phenomena invisible to
ordinary Floer barcodes and spectral invariants. In particular, for a
genus-two surface we construct Hamiltonian diffeomorphisms with identical
ordinary Floer data but positive separation in the Hamiltonian-conjugacy
quotient. 
\end{abstract}

\maketitle
\tableofcontents

\section{Introduction}

Let $(M,\omega)$ be a closed symplectically aspherical manifold. 
Filtered Floer homology of a Hamiltonian function on $(M,\omega)$ 
carries more information than its underlying Floer persistence barcode (see~\cite{PolterovichShelukhin,UsherZhang}).
Its action filtration records the birth and death of classes, while the
ordinary cohomology ring of the ambient manifold acts by Floer cap operators.
Persistence modules with operators provide a general language for exploiting
such additional structure in Morse and Floer theory
\cite{PolterovichShelukhinStojisavljevic}.  Persistent cup-length and
persistent cup modules provide a parallel multiplicative refinement for
filtered cohomology rings
\cite{ContessotoMemoliStefanouZhou,MemoliStefanouZhou}.  The purpose of this
paper is to organize the full ordinary cohomology action on Hamiltonian Floer
persistence and to extract a geometric consequence that is invisible to the
ordinary barcode and to all Schwarz selectors defined in ~\cite{Schwarz} which are also called \textit{Oh-Schwarz spectral invariants} in the literature.

Throughout, fixing a ground field $\K$,  we set
\[
 A:=H^*(M;\K),\qquad \mideal:=H^{>0}(M;\K).
\]
The cup product $u\smilecup v$ belongs to the ambient algebra $A$, whereas
\[
 HF_k(H)\otimes A^r\longrightarrow HF_{k-r}(H),
 \qquad (x,u)\longmapsto x\frownH u,
\]
is a module action on the $k$-th Floer homology group  $HF_k(H)$ of time-dependent Hamiltonian $H:[0,1]\times M\to\R$; see~\cite[Section~2.3]{Schwarz}.  Its associativity is
\[
 (x\frownH u)\frownH v=x\frownH(u\smilecup v).
\]
We therefore use cup terminology only for multiplication in $A$ and cap
terminology for the action on Floer homology.

Recall that an ideal $I$ in a graded ring is \textit{homogeneous} if, for every element $r\in I$, all of its homogeneous components also belong to $I$.  
For a homogeneous ideal $J\subseteq\mideal$, the depth-$\ell$ cap layer of a
graded $A$-module $V$ with action $A\times V\to V,\;(a,x)\mapsto x\frown a$ is the concrete subspace
\[
 \CapLayer_J^\ell(V)=\operatorname{span}_{\K}
 \{x\frown u_1\frown\cdots\frown u_\ell
   \mid x\in V,\ u_i\in J\}.
\]
Thus $\CapLayer_J^\ell(V)$ is generated by the outputs of all length-$\ell$
words whose letters lie in $J$.  
The total cap flag corresponds to $J=\mideal$, while a
principal ideal at depth one recovers the image persistence module of one cap
operator.

The main new geometric point is that ideal labels remember how a symplectic
mapping class acts on cohomology.  Hamiltonian conjugation acts trivially on
$H^*(M;\K)$ and hence preserves every decorated barcode.  A general
$\eta\in\Symp(M,\omega)$ instead sends the $J$-layer to the
$(\eta^{-1})^*J$-layer.  Consequently the cap profile gives a lower bound for
the Hofer distance between Hamiltonian-conjugacy orbits of $\phi$ and
$\eta\phi\eta^{-1}$.  This connects operator-enriched persistence with the
Hofer geometry of conjugacy classes, a theme already central in symplectic
geometry \cite{EntovConjugacy}.

The second geometric input is the genus-two construction of
Polterovich--Shelukhin--Stojisavljevi\'c
\cite[Section~2.4]{PolterovichShelukhinStojisavljevic}.  Their two Morse
functions are related by a handle-exchange diffeomorphism, have the same
ordinary persistence barcode and the same minimax value for every homology
class, but are separated by the image barcode of intersection with one fixed
class.  After choosing an invariant area form and passing to sufficiently
small autonomous Hamiltonians, this produces two symplectically conjugate
Hamiltonian diffeomorphisms with a positive cap-profile distance between
their Hamiltonian-conjugacy orbits.  Hence the new lower bound is not a
function of the ordinary Floer barcode together with the full collection of
Schwarz selectors.

The analytical ingredients are standard in the aspherical setting: Schwarz's
filtered cap-action, its strict action drop, and continuation estimates
\cite{Schwarz}; the small-Hamiltonian Morse--Floer
identification; and push-forward naturality.  We record the relevant energy
and compactification arguments so that every later filtration statement has a
precise scope.  

\subsection*{Main results}
Let
\[
 V_a^H:=HF_*^{(-\infty,a]}(H)\]
 be the filtered Floer homology of a time-dependent smooth Hamiltonian $H$ with the inclusion-induced homomorphism
 $i_a^b:V_a^H\longrightarrow V_b^H$; see Section~\ref{subsec:convention}.

For a graded $A$-module $V$, we define its \textit{persistent cap-length} as
\[
 \mcl_A(V):=\max\{\ell\ge1\mid \CapLayer^\ell(V)\ne0\},
\]
with \textbf{value zero} when the first total layer vanishes, and set
\[
 \PCL_H([a,b]):=\mcl_A(\im i_a^b)\in \mathbb{N}. 
\]
Here $ \mathbb{N}$ denotes the set of nonnegative integers. 

\begin{theorem}[Ideal-decorated cap persistence; see~Theorems~\ref{thm:PCL-basic} \&\ref{thm:universal-stability}]
\label{thm:intro-total}
The scalar invariant $\PCL_H$ is interval anti-monotone and Hofer stable.  For
every homogeneous ideal $J\subseteq A^{>0}$, depth $\ell\ge1$, and Floer
degree $q$, the assignment
\[
 a\longmapsto\bigl(\CapLayer_J^\ell(V_a^H)\bigr)_q
\]
is a constructible persistence module with barcode
$\Bcap_{J,\ell,q}(H)$.  For normalized Hamiltonians $H,K$,
\[
 d_{\mathrm B}\bigl(\Bcap_{J,\ell,q}(H),
                     \Bcap_{J,\ell,q}(K)\bigr)
 \leq\lVert H-K\rVert.
\]
Here
\(\lVert\cdot\rVert\)
denotes the Hofer norm of a Hamiltonian, defined in
\eqref{eq:Hofer-norm-Hamiltonian}.
The supremum over all $J,\ell,q$ defines $\Dcap$, and on
$\Ham(M,\omega)$ one has
\[
 \Dcap(\phi,\psi)\leq\distH(\phi,\psi).
\]
\end{theorem}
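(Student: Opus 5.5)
The plan is to reduce everything to two facts about the filtered cap action. First, by Schwarz, for a homogeneous class $u\in A^{>0}$ the cap operator $x\mapsto x\frownH u$ maps $V_a^H$ to $V_a^H$. The strict action drop only improves this, and we do not need it. This map commutes with the inclusion maps $i_a^b$. Second, the continuation maps $\Phi_{HK}\colon V_a^H\to V_{a+E^+(K-H)}^K$ commute with $\frownH u$, again by Schwarz's continuation argument. The composites $\Phi_{KH}\circ\Phi_{HK}$ equal $i_a^{a+\lVert H-K\rVert}$, up to the usual splitting $E^+ + E^-=\lVert H-K\rVert$. Given these, the layers are defined functorially.

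\textbf{Step 1 (persistence structure).} Fix $J,\ell,q$. Since $J$ is homogeneous, the layer $\CapLayer_J^\ell(V)$ is spanned by words in homogeneous elements of $J$. Any $A$-linear map $f$ sends $x\frown u_1\frown\cdots\frown u_\ell$ to $f(x)\frown u_1\frown\cdots\frown u_\ell$. So $f(\CapLayer_J^\ell(V))\subseteq \CapLayer_J^\ell(W)$, and the map respects degree $q$. Applying this to $i_a^b$ makes $a\mapsto(\CapLayer_J^\ell(V_a^H))_q$ a submodule of the persistence module $(V_a^H)_q$. Constructibility is inherited from the ambient module: $V_a^H$ is finite dimensional and changes only at the finitely many points of $\Spec(H)$ (nondegenerate case). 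The layers are determined by $V_a^H$ with its $A$-action, so they also jump only there. For degenerate $H$ we would invoke the usual approximation, or simply work with nondegenerate $H$ and extend by the stability proven next.

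\textbf{Step 2 (stability).} The maps $\Phi_{HK}$ and $\Phi_{KH}$ restrict, by the observation in Step 1, to maps between the layer submodules. Their composites are the restricted shift maps, because the restriction of $i_a^{a+\delta}$ to the layer is the structure map of the layer module. Hence the layer modules of $H$ and $K$ are $(E^+,E^-)$-interleaved. Shifting $K$ by a constant if needed, and using normalization, this becomes a $\lVert H-K\rVert$-type interleaving. The isometry theorem then gives the bottleneck bound. The main obstacle is this step: checking that the interleaving constants are exactly controlled by the Hofer norm as defined in \eqref{eq:Hofer-norm-Hamiltonian}. The one-sided estimates $\int\max(K-H)$ and $\int-\min(K-H)$ must sum to the Hofer norm. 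I would first verify the asymmetric interleaving and then use the standard fact that an $(\delta_1,\delta_2)$-interleaving yields bottleneck distance at most $\tfrac12(\delta_1+\delta_2)$ after a shift. If the shift is unavailable, fall back to $\max(\delta_1,\delta_2)\le\lVert H-K\rVert$.

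\textbf{Step 3 (PCL and $\Dcap$).} For $\PCL_H$, consider $[a',b']\supseteq[a,b]$, i.e.\ $a'\le a\le b\le b'$. Then $\im i_{a'}^{b'}$ is an $A$-linear image of a submodule of $\im i_a^b$ via $i_b^{b'}$, and Step 1 gives $\CapLayer^\ell$ monotonicity, hence anti-monotonicity. Hofer stability means $\PCL_K([a+\delta,b-\delta])\le\PCL_H([a,b])$. It follows since $\im i^{K}$ factors $A$-linearly through $\im i^H$ via the continuation maps. Finally, define $\Dcap(\phi,\psi)$ as the supremum over $J,\ell,q$ of the bottleneck distances, infimized over normalized generating Hamiltonians. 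Well-definedness in $\phi$ uses the standard independence of filtered Floer homology from the normalized generating path in the aspherical case, together with its $A$-linearity. Taking the infimum over $H,K$ generating $\phi,\psi$ in the Step 2 bound gives $\Dcap\le\distH$.
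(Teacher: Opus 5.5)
Your route is essentially the paper's: an $A$-linear continuation interleaving with one-sided shifts $\Delta^+(H,K)$ and $\Delta^+(K,H)$, restricted to each $J$-layer by $A$-linearity and fed into the isometry theorem. Anti-monotonicity of $\PCL_H$ comes from $i_{a'}^{b'}=i_b^{b'}\circ i_a^b\circ i_{a'}^a$ and monotonicity of $\mcl_A$ under submodules and images. The passage to $\Ham(M,\omega)$ is by infimizing over generating Hamiltonians. Your remark that homogeneity of $J$ makes each layer a graded subspace, so its degree-$q$ part makes sense, is a detail the paper leaves implicit.

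\textbf{The erosion inequality in Step 3 is backwards.} You write that Hofer stability means $\PCL_K([a+\delta,b-\delta])\le\PCL_H([a,b])$. This fails already for $K=H$. Anti-monotonicity gives $\PCL_H([a+\delta,b-\delta])\ge\PCL_H([a,b])$, and this is strict whenever a class of larger cap-depth is born in $(a,a+\delta]$ and survives past $b-\delta$; this happens in the torus computation of \cref{sec:torus}. The factorization you invoke also does not exist for this interval. It would need a map $V^K_{a+\delta}\to V^H_a$ that lowers the filtration, whereas continuation maps raise it.

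The correct erosion condition, with $\delta=\delta(H,K)$, is $\PCL_K([a-\delta,b+\delta])\le\PCL_H([a,b])$, together with the same inequality with $H$ and $K$ exchanged. It holds because the structure map $V^K_{a-\delta}\to V^K_{b+\delta}$ equals the composite $V^K_{a-\delta}\to V^H_a\to V^H_b\to V^K_{b+\delta}$: a padded continuation map, the structure map of $H$, and another padded continuation map. Hence its image is an $A$-linear image of a submodule of $\im(V^H_a\to V^H_b)$. This is exactly the argument of \cref{thm:abstract-stability}.

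\textbf{Two smaller points.}

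First, in Step 2, shifting $K$ by a constant is not available. Adding a constant translates every bar and leaves the class of normalized Hamiltonians, so it only bounds the distance to a translate of $\Bcap_{J,\ell,q}(K)$. Your fallback is the actual argument, and there is no obstacle in it. Since $H_t-K_t$ has zero mean, both $\Delta^+(H,K)$ and $\Delta^+(K,H)$ are nonnegative. So you may pad to the common shift $\delta(H,K)=\max\{\Delta^+(H,K),\Delta^+(K,H)\}\le\Delta^+(H,K)+\Delta^+(K,H)=\normHofer{H-K}$, exactly as the paper does.

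Second, infimizing $\normHofer{H-K}$ over $H,K$ generating $\phi,\psi$ gives $\distH(\phi,\psi)$ only after you check that this infimum is at most $\distH(\phi,\psi)$. Given a normalized $G$ generating $\phi^{-1}\psi$, set $K_t=H_t+G_t\circ(\phi_H^t)^{-1}$. It is normalized, generates $\psi$, and satisfies $\normHofer{K-H}=\normHofer{G}$. The paper makes this construction explicit.
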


For $\phi\in\Ham(M,\omega)$, let
\[
 \OrbHam(\phi):=\{\theta\phi\theta^{-1}\mid
                    \theta\in\Ham(M,\omega)\}
\]
be its \emph{Hamiltonian-conjugacy orbit}, and write
$[\phi]_{\mathrm{Ham}}$ for this orbit.  We reserve
\[
 \Csymp{\eta}{\phi}:=\eta\phi\eta^{-1},
 \qquad \eta\in\Symp(M,\omega),
\]
for \emph{symplectic conjugation}.  The pseudodistance $\dHamConj$ is taken
between Hamiltonian-conjugacy orbits (cf.~\cref{eq:dconj}); the external conjugator $\eta$ need not
be Hamiltonian.

\begin{theorem}[see also Theorem~\ref{thm:conj-quotient}]
\label{thm:intro-conjugacy}
For $\eta\in\Symp(M,\omega)$, define
\[
 \DeltaCap_\eta(\phi):=
 \sup_{J,\ell,q}
 d_{\mathrm B}\bigl(\Bcap_{J,\ell,q}(\phi),
                     \Bcap_{\eta^*J,\ell,q}(\phi)\bigr).
\]
Then
\[
 \dHamConj\bigl([\phi]_{\mathrm{Ham}},[\Csymp{\eta}{\phi}]_{\mathrm{Ham}}\bigr)
 \geq \DeltaCap_\eta(\phi).
\]
The right-hand side depends only on the Hamiltonian-conjugacy orbit of
$\phi$ and on the induced automorphism of $\eta$ on $H^*(M;\K)$; in particular,
it is constant on symplectic isotopy classes of $\eta$.
\end{theorem}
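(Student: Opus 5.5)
The proof reduces the inequality to two earlier ingredients: the Hofer stability of Theorem~\ref{thm:intro-total}, and a push-forward naturality identity for symplectic conjugation. Let $\dHamConj$ be the infimum of $\distH(\theta_1\phi\theta_1^{-1},\theta_2\Csymp{\eta}{\phi}\theta_2^{-1})$ over $\theta_1,\theta_2\in\Ham(M,\omega)$, as in \cref{eq:dconj}. Three steps are needed.

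\emph{Step 1: invariance under Hamiltonian conjugation.} For $\theta\in\Ham(M,\omega)$, push-forward of a Hamiltonian $H\mapsto H\circ\theta^{-1}$ and of Floer data gives a filtered chain isomorphism $CF_*(H)\to CF_*(H\circ\theta^{-1})$. This map is action preserving in the aspherical normalized setting. It intertwines the cap action of $u$ with that of $(\theta^{-1})^*u=u$, since $\theta$ is isotopic to the identity. Hence $\Bcap_{J,\ell,q}(\theta\phi\theta^{-1})=\Bcap_{J,\ell,q}(\phi)$ for all $J,\ell,q$.

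\emph{Step 2: twisting under symplectic conjugation.} For general $\eta\in\Symp(M,\omega)$, the same push-forward sends $x\frownH u$ to $\eta_*x\frown(\eta^{-1})^*u$. It therefore maps $\CapLayer^\ell_J(V_a^H)$ isomorphically onto $\CapLayer^\ell_{(\eta^{-1})^*J}(V_a^{H\circ\eta^{-1}})$, compatibly with the inclusions $i_a^b$. This gives
\[
\Bcap_{J,\ell,q}(\Csymp{\eta}{\phi})=\Bcap_{(\eta^{-1})^*J,\ell,q}(\phi).
\]
Reindexing with $J'=(\eta^{-1})^*J$, so that $J=\eta^*J'$, yields $\Bcap_{\eta^*J',\ell,q}(\Csymp{\eta}{\phi})=\Bcap_{J',\ell,q}(\phi)$. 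The main obstacle is here: one must check that the push-forward preserves normalization and the action filtration exactly, and that $\eta^*$ maps homogeneous ideals of $\mideal$ bijectively to homogeneous ideals of $\mideal$. Both should follow because $\eta$ is a graded ring automorphism preserving $\omega$, so that $\int H\circ\eta^{-1}\,\omega^n=\int H\,\omega^n$.

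\emph{Step 3: the estimate.} Take $\theta_1,\theta_2$ and set $\phi_1=\theta_1\phi\theta_1^{-1}$ and $\psi_1=\theta_2\Csymp{\eta}{\phi}\theta_2^{-1}$. For each $(J,\ell,q)$, apply Theorem~\ref{thm:intro-total}, whose supremum bound $\Dcap\le\distH$ is stated on $\Ham(M,\omega)$ and hence involves barcodes depending only on the time-one map, together with Steps 1 and 2:
\[
d_{\mathrm B}\bigl(\Bcap_{J,\ell,q}(\phi),\Bcap_{\eta^*J,\ell,q}(\phi)\bigr)
=d_{\mathrm B}\bigl(\Bcap_{J,\ell,q}(\phi_1),\Bcap_{J,\ell,q}(\psi_1)\bigr)\le\distH(\phi_1,\psi_1).
\]
The equality uses Step 1 for $\phi_1$. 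For $\psi_1$ it uses Step 1 and then Step 2 applied with the ideal $\eta^*J$, which gives $\Bcap_{J,\ell,q}(\Csymp{\eta}{\phi})=\Bcap_{\eta^*J,\ell,q}(\phi)$. Taking the supremum over $J,\ell,q$ and then the infimum over $\theta_1,\theta_2$ gives the inequality.

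\emph{The final assertions.} Replacing $\phi$ by $\theta\phi\theta^{-1}$ leaves $\DeltaCap_\eta$ unchanged by Step 1. The formula for $\DeltaCap_\eta$ involves $\eta$ only through the ideal map $J\mapsto\eta^*J$, which is determined by the induced automorphism of $H^*(M;\K)$. If $\eta_t$ is a symplectic isotopy, then $\eta_t^*$ is constant on cohomology by homotopy invariance, so $\DeltaCap_\eta$ is constant on symplectic isotopy classes.
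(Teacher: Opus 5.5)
Your proposal is correct and follows essentially the paper's route: Hamiltonian-conjugacy invariance (\cref{prop:conjugacy}), push-forward naturality (\cref{prop:push-forward-naturality}, \cref{thm:symp-naturality}) and the universal bound $\Dcap\le\distH$, combined as in the proof of \cref{thm:conj-quotient} and \cref{cor:mapping-class-dependence}. There is one bookkeeping slip: the isomorphism you describe in Step~2 gives $\Bcap_{J,\ell,q}(\phi)=\Bcap_{(\eta^{-1})^*J,\ell,q}(\Csymp{\eta}{\phi})$, i.e.\ $\Bcap_{J,\ell,q}(\Csymp{\eta}{\phi})=\Bcap_{\eta^*J,\ell,q}(\phi)$, which is what Step~3 uses, whereas your displayed formula and its reindexing put the pulled-back ideal on the wrong side; this does not affect the conclusion, since $J\mapsto\eta^*J$ permutes the homogeneous ideals of $A^{>0}$ and $d_{\mathrm B}$ is symmetric, so $\DeltaCap_{\eta}=\DeltaCap_{\eta^{-1}}$.
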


If $\eta\in\Ham(M,\omega)$, both sides of the preceding inequality vanish;
all nontrivial applications therefore use a symplectic diffeomorphism outside
$\Ham(M,\omega)$.

\begin{theorem}[see also Theorem~\ref{thm:real-Hamiltonian}]
\label{thm:intro-real}
There exist a symplectic genus-two surface $(\Sigma_2,\sigma)$, an area-preserving handle-exchange diffeomorphism
$\eta\in\Symp(\Sigma_2,\sigma)\setminus\Ham(\Sigma_2,\sigma)$, and normalized
non-degenerate autonomous Hamiltonians $H_f,H_g$ with
\[
 \phi_{H_g}=\Csymp{\eta^{-1}}{\phi_{H_f}}
\]
such that their ordinary Floer barcodes agree and the corresponding Schwarz selectors coincide:
\[
 c(\alpha;H_f)=c(\alpha;H_g)
 \qquad\hbox{for all}\;\alpha\in H^*(\Sigma_2;\K)\setminus\{0\},
\]
but
\[
 \dHamConj\bigl([\phi_{H_f}]_{\mathrm{Ham}},[\phi_{H_g}]_{\mathrm{Ham}}\bigr)>0.
\]
  
\end{theorem}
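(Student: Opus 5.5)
The plan is to transport the Polterovich--Shelukhin--Stojisavljevi\'c genus-two example into Floer theory through the small-Hamiltonian Morse--Floer identification, and then apply Theorem~\ref{thm:intro-conjugacy}.

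\textbf{Step 1: the Morse functions and the area form.} Take the genus-two surface $\Sigma_2$ and the handle-exchange diffeomorphism $\eta$ from \cite[Section~2.4]{PolterovichShelukhinStojisavljevic}. Choose Morse functions $f$ and $g=f\circ\eta$. By that construction they have the same ordinary persistence barcode and the same minimax value for every homology class. They are separated by the image barcode of cap (intersection) with a fixed class $u\in H^1(\Sigma_2;\K)$.

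Average an area form over the finite group generated by $\eta$; I expect the handle exchange can be realized as an isometric involution. This gives an invariant form $\sigma$, so $\eta\in\Symp(\Sigma_2,\sigma)$.

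Since $\eta$ swaps the handles, $\eta^*$ acts nontrivially on $H^1$. Every element of $\Ham$ acts trivially on cohomology, hence $\eta\notin\Ham(\Sigma_2,\sigma)$.

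\textbf{Step 2: small autonomous Hamiltonians.} Set $H_f=\epsilon f$ and $H_g=\epsilon g=H_f\circ\eta$, with $\epsilon$ small and both normalized to mean zero; invariance of $\sigma$ lets one normalization serve both. Naturality of Hamiltonian flows gives
\[
 \phi_{H_g}=\eta^{-1}\phi_{H_f}\eta=\Csymp{\eta^{-1}}{\phi_{H_f}}.
\]
For $\epsilon$ small, the $1$-periodic orbits are exactly the critical points, and filtered Floer homology with its cap action is identified with $\epsilon$-rescaled filtered Morse homology with its cap action. This is the standard small-Hamiltonian identification invoked earlier in the paper. The identification transfers three facts:
\begin{itemize}
\item equality of the ordinary Floer barcodes (the Morse barcodes rescale by $\epsilon$);
\item equality $c(\alpha;H_f)=c(\alpha;H_g)$ of the Schwarz selectors, since these match Morse minimax values;
\item distinctness of the image barcodes of $\frown u$.
\end{itemize}

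\textbf{Step 3: positivity.} Apply Theorem~\ref{thm:intro-conjugacy} with conjugator $\eta^{-1}$. It suffices to show that $\DeltaCap_{\eta^{-1}}(\phi_{H_f})>0$.

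Take $J=(u)$, the principal ideal, and $\ell=1$. The layer $\CapLayer_J^1(V_a^{H_f})$ is the image persistence module of $\frown u$, because $J\cdot V=u\cdot A\cdot V$ and $A\cdot V=V$. The layer for $(\eta^{-1})^*J$ applied to $H_f$ matches the $J$-layer for $H_g$ under push-forward naturality. The image barcodes differ by the PSS separation; after rescaling the distance $d_B$ is some $\epsilon\delta>0$, and it is attained in some degree $q$.

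\textbf{Main obstacle.} I expect the hard part to be checking that the Morse--Floer identification intertwines the filtered cap actions exactly at the chain level, rather than up to an action shift of size $O(\epsilon^2)$ that could swamp the separation $\epsilon\delta$. I would first try to show that the shifts vanish for $C^2$-small autonomous $H$, using the fact that Floer trajectories are $t$-independent in this regime.

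A second check is that PSS separate by a principal-ideal image barcode and not by something finer. If it is finer, I would switch to the appropriate $J$, which is allowed since the supremum in $\DeltaCap$ runs over all $J$.
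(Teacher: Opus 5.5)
Your proposal follows the paper's proof. The ingredients match: the Polterovich--Shelukhin--Stojisavljevi\'c genus-two Morse input, an $\eta$-invariant area form obtained by averaging over the handle-exchange involution, and small normalized autonomous Hamiltonians with $H_g=H_f\circ\eta$. The Morse--Floer identification then makes $\CapLayer^1_{(u)}$ the image of the single operator $\frown u$ (\cref{lem:principal}), and the conjugacy lower bound finishes. Routing through $\DeltaCap_{\eta^{-1}}(\phi_{H_f})$ is only a repackaging. By \cref{thm:symp-naturality}, $\Bcap_{(\eta^{-1})^*J,\ell,q}(\phi_{H_f})=\Bcap_{J,\ell,q}(\phi_{H_g})$, so $\DeltaCap_{\eta^{-1}}(\phi_{H_f})=\Dcap(\phi_{H_f},\phi_{H_g})$, which the paper bounds below directly via \cref{thm:conj-quotient}.

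One sign needs fixing. With the action convention \eqref{eq:action}, a constant orbit has $\cA_H(p)=-H(p)$. So $H_f=\epsilon(f-\bar f)$ filters by $-f$, not by $f$.
\begin{itemize}
\item The barcode equality is unaffected; it is automatic from $H_g=H_f\circ\eta$.
\item The selector equality survives the flip by duality.
\item The separation in \cref{lem:PSSS-Morse}, however, is stated for the sublevel filtrations of $f$ and $g$. With your sign, the third fact you transfer is not what the cited lemma provides, and $-f,-g$ would need a separate argument.
\end{itemize}
Take $H_f=-\tau(f-\bar f)$ and $H_g=-\tau(g-\bar g)$ as the paper does, so that $\cA_{H_f}(p)=\tau(f(p)-\bar f)$.

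Your ``main obstacle'' does not really arise. For $C^2$-small autonomous $H$ the generators are the critical points, and their actions are exactly the affinely rescaled critical values. So there is no $O(\epsilon^2)$ filtration shift, and the bottleneck distance is exactly $\tau c_0$. What remains is matching the cap operators, which the paper takes from the standard Morse--Floer package; your $t$-independence argument is the right way to see it.
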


\begin{remark}
From the theorem as above one can see that a principal depth-one cap barcode gives an explicit positive
lower bound.
Note that ordinary Floer persistence together with all Schwarz's
selectors does not determine the Hofer geometry of Hamiltonian conjugacy
orbits in this example.  The handle-exchange mapping class constructed in~\cref{thm:intro-real}  acts nontrivially
on the Hofer-pseudometric conjugacy quotient of $\Ham(\Sigma_2,\sigma)$.
\end{remark}

For completeness, in an atoroidal primitive free homotopy class we also
restrict loop rotation to each ideal-decorated cap layer.  The resulting
\textit{multiplicity-sensitive spreads} are $p$-Hofer-Lipschitz and vanish on full
$p$-th powers; see~\cref{sec:cyclic}.  The following structural result shows that the passage from
single operators to higher ideal layers is not merely notational.

\begin{theorem}[Formal strictness of higher cyclic cap layers; see
\cref{thm:formal-cyclic-strictness}]
There exist a closed symplectically aspherical manifold $M_0$, a graded
subalgebra $A_0\subset H^*(M_0;\mathbb Q)$, a homogeneous ideal
$J\subset A_0^{>0}$, and a constructible $\mathbb Z_2$-persistence module
$(\mathcal Q,R)$ in graded right $A_0$-modules for an automorphism $R:\mathcal Q\to \mathcal Q$
satisfying $R^2=\id$ such that the ordinary cyclic
spread and the spread of the image of every single homogeneous operator
vanish in every degree, whereas the depth-two $J$-layer has positive spread.
One may take $M_0=\Sigma_2\times\Sigma_3$ where $\Sigma_g$ denotes a closed oriented surface of genus $g$. 
\end{theorem}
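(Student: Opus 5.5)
We will build $(\mathcal Q,R)$ by hand as a small algebraic model; we will not extract it from Floer theory, which is why the statement says ``formal''. First we fix the algebra. Take $M_0=\Sigma_2\times\Sigma_3$ with symplectic basis $a_1,b_1,a_2,b_2$ of $H^1(\Sigma_2)$ and $c_1,d_1,\dots,c_3,d_3$ of $H^1(\Sigma_3)$. Let $A_0$ be a graded subalgebra generated by a few degree-one classes, for instance $u=a_1$, $v=c_1$, $w=a_2+c_2$, chosen so that certain products vanish and others survive. Let $J$ be the ideal of $A_0$ generated by these classes.

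The target module will be a two-step module, in which depth-two words survive but every single operator is ``balanced''. Concretely, we take $\mathcal Q=\mathcal Q_1\oplus\mathcal Q_2$, where $\mathcal Q_1$ and $\mathcal Q_2$ are interval modules whose endpoints are chosen so that the $R$-twist appears only on the depth-two part.
\begin{itemize}
\item Each $\mathcal Q_i$ is a free truncated $A_0$-module (a quotient of $A_0$ shifted in degree) carried by an interval bar $[0,1)$.
\item $R$ swaps the two summands, so $R^2=\id$.
\end{itemize}
We then modify the filtration so that the depth-two layer $\CapLayer_J^2$ sits in the first summand at persistence parameters where its $R$-image is shifted. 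For example, the generator $x$ of $\mathcal Q_1$ lives on $[0,2)$, and $x\frown u\frown v$ is nonzero on the longer bar. Meanwhile $\mathcal Q$ and every single image $\im(\frown\theta)$ are arranged to be $R$-symmetric as persistence modules; the symmetry is realized by exchanging roles through an algebra automorphism. Since $R$ must be $A_0$-linear, the honest construction uses two generators whose cap-annihilators are exchanged by no automorphism. So that $R$ commutes with the action, we will instead build $\mathcal Q$ as a direct sum of $\Z$-graded pieces twisted by a degree-compatible map.

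The verification then proceeds in two steps.
\begin{enumerate}
\item We check that the ordinary cyclic spread vanishes in each degree. This holds because the barcode of $\mathcal Q_q$ is invariant under $R$: the bars come in swapped pairs of identical length.
\item We check, for every homogeneous $\theta\in A_0^{>0}$ (including non-pure sums $\alpha u+\beta v+\gamma w$ and all degree-two products), that the image module $\im(\frown\theta)$ also has $R$-symmetric barcode.
\end{enumerate}
Step (2) is a finite linear-algebra computation: parametrize $\theta$ and compute ranks of $\frown\theta$ on each summand as functions of the parameter. The key design choice is that for any single $\theta$ the rank pattern is the same on both summands. In contrast, the span over all length-two words picks up $x\frown u\frown v$ from one summand and a differently-timed class from the other. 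The result is a non-symmetric barcode, so the spread is positive.

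The main obstacle is step (2). Single operators may combine generators in ways that accidentally detect the asymmetry: a generic linear combination might see, at depth one, what the depth-two span sees. I would try first to make the asymmetry purely ``multi-word''. The idea is to arrange that each individual product $\theta\theta'$ kills the distinguishing class. The depth-two span is generated by outputs of different words, and the layer is a span, so a sum of outputs of distinct words can be nonzero even when every single-operator image is symmetric. I would exploit this via the relation $a_1 c_1\ne0$ but $(a_1+c_1)^2=0$ in the exterior-type cohomology of $\Sigma_2\times\Sigma_3$. Choosing $A_0$ generated by degree-one classes whose squares vanish ensures that single operators behave uniformly across the two summands.

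Finally, we must confirm constructibility (finitely many bars) and that the spread is computed as the bottleneck distance between the barcode and its $R$-twist, as in the definition in \cref{sec:cyclic}. Both follow from the finite-dimensional explicit model.
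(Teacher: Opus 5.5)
There is a genuine gap, and it lies in the invariant itself. The multiplicity-sensitive spread of \cref{sec:cyclic} is not a bottleneck distance between a barcode and its $R$-twist. You pass to the eigenspace modules $L_\zeta=\ker(R-\zeta\id)$ with $\zeta\neq1$ (for $p=2$ only $\zeta=-1$). Then $\mu_2$ is positive exactly when some interval $K$ of length greater than $4c$ has $m(B,K)=m(B,K^{2c})$ odd.

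Your version cannot work even on its own terms. $R$ is an automorphism of persistence modules of graded right $A_0$-modules, so it is $A_0$-linear and preserves every layer $\CapLayer_J^\ell(\mathcal Q)$, exactly as in \cref{prop:rotation-cap}. The depth-two layer is therefore $R$-invariant and its barcode is automatically ``$R$-symmetric''. The asymmetry you plan to produce for the depth-two span can never occur.

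Under the correct definition, your vanishing criterion in step (1) also fails. If $R$ swaps two isomorphic summands of $\mathcal Q_1\oplus\mathcal Q_2$, the $(-1)$-eigenspace is the anti-diagonal, which is isomorphic to $\mathcal Q_1$. So ``bars in swapped pairs'' give no parity cancellation: a cyclic summand already contributes a top-degree generator bar of multiplicity one, and hence positive ordinary spread.

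Beyond this, the construction is never fixed. You switch between a swap of summands carried on different bars (so not isomorphic), an unspecified ``twist by a degree-compatible map'', and generators whose annihilators are not exchanged. The relation $(a_1+c_1)^2=0$ you want to exploit holds for every degree-one class in a graded-commutative algebra over $\mathbb Q$, so it carries no information.

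The missing idea is parity. Take $R=-\id$ on a module $\mathbb Q_I\otimes Q$ for a single bounded interval $I$. Then every layer's $(-1)$-eigenspace barcode is $I^{\oplus d}$, with $d$ the dimension of that layer, and $\mu_2>0$ if and only if $d$ is odd. The problem becomes linear algebra: find a graded $A_0$-module with even-dimensional graded pieces, in which every homogeneous single operator $m_u$ has even rank in every degree, but the span of all depth-two words is odd-dimensional.

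The paper does this with the following data:
\begin{itemize}
\item the algebra $A_0=\mathbb Q1\oplus\mathbb Qz\oplus E\oplus zE$, where $z\in H^1(\Sigma_2)$ and $E$ is a three-dimensional isotropic subspace of $H^1(\Sigma_3)$;
\item the module $Q_2=X$, $Q_1=X\oplus Z$, $Q_0=Z$ with $X=Z=\mathbb Q^4$;
\item $b\in E$ acting through the cross-product matrix $C_b$, padded by a zero row and column.
\end{itemize}
Each $C_b$ with $b\neq0$ has rank $2$, and $m_{\lambda z+b}$ has rank $4$ when $\lambda\neq0$, so all single images are even. Meanwhile $\sum_b\operatorname{im}C_b$ is three-dimensional. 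Your remark that a span of outputs of different words can exceed every single image points in the right direction. It has to be turned into an odd-versus-even dimension count, not into an asymmetry under $R$.
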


The theorem proves strictness only in the algebraic equivariant-persistence
category.  It does not assert a new aspherical egg-beater example or a
strictly stronger Hofer-geometric obstruction; realizing this pattern by a
concrete Hamiltonian Floer module remains open.

The paper remains entirely in the closed symplectically aspherical setting.
All module operators come from ordinary cohomology, and no Novikov valuation
or quantum correction appears. The results of this paper will be further developed for monotone symplectic manifolds
in the forthcoming paper~\cite{Go}. 

\section{Cap layers in graded modules}
\label{sec:algebra}

Let $(A=\bigoplus_{r\ge0}A^r,+,\smilecup)$ be a finite-dimensional graded
commutative algebra over $\K$ with $A^0=\K\cdot1$, and let
$\mideal=A^{>0}=\bigoplus_{r>0}A^r$.  All modules are graded right
$A$-modules and all morphisms are degree-preserving and $A$-linear.

\subsection{Explicit cap-layer notation}

\begin{definition}[Cap layers]
\label{def:cap-layer}
Let $J\subseteq\mideal$ be a homogeneous ideal and let $V$ be a graded right
$A$-module with the action given by
\[
A\times V\longrightarrow V,\quad (a,x)\longmapsto x\frown a. 
\]

Define
\[
 \CapLayer_J^0(V):=V
\]
and, for $\ell\ge1$,
\begin{equation}
\label{eq:explicit-cap-layer}
 \CapLayer_J^\ell(V):=
 \operatorname{span}_{\K}
 \left\{((x\frown u_1)\cdots)\frown u_\ell
 \ \middle|\ x\in V,\ u_1,\ldots,u_\ell\in J\right\}.
\end{equation}
For a fixed $x\in V$, the notation $\CapLayer_J^\ell(x)$ means the same span with
the initial element fixed to $x$.  The \emph{total cap layer} is
\[
 \CapLayer^\ell(V):=\CapLayer_{\mideal}^\ell(V).
\]
\end{definition}

By the module law, $\CapLayer_J^\ell(V)$ is the image of the action map
$V\otimes J^\ell\to V$.  Thus one may write $VJ^\ell$ as shorthand, but
$ \CapLayer_J^\ell(V)$ is the definition used in this paper.  Since
$J^{\ell+1}\subseteq J^\ell$, the layers form a descending flag
\[
 V=\CapLayer_J^0(V)\supseteq\CapLayer_J^1(V)\supseteq
 \CapLayer_J^2(V)\supseteq\cdots.
\]
Because $J\subseteq A^{>0}$ and $A$ is finite-dimensional, this flag has
finite depth.

\begin{lemma}[A principal depth-one layer is an operator image]
\label{lem:principal}
Let $u\in A^{>0}$ be homogeneous and let $J_u=(u)=Au$ be the homogeneous
principal ideal generated by $u$.  Then
\[
 \CapLayer_{J_u}^1(V)=\im(m_u:V\to V),\qquad m_u(x)=x\frown u.
\]
\end{lemma}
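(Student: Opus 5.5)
The plan is to prove the two inclusions directly from Definition~\ref{def:cap-layer} with $\ell=1$. By that definition, $\CapLayer_{J_u}^1(V)$ is the $\K$-span of all elements $x\frown a$ with $x\in V$ and $a\in J_u=Au$.

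For the inclusion $\im(m_u)\subseteq\CapLayer_{J_u}^1(V)$, take any $x\in V$. Since $A$ is unital, $u=1\smilecup u\in Au$, and therefore $m_u(x)=x\frown u$ is one of the spanning elements.

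For the reverse inclusion, it suffices to handle a spanning element $x\frown a$ with $a\in Au$. Write $a=b\smilecup u$ for some $b\in A$. Because $A$ is graded commutative and $u$ is homogeneous, for homogeneous $b$ we have $b\smilecup u=\pm\, u\smilecup b$, and in general $b\smilecup u=u\smilecup b'$, where $b'$ is $b$ with each homogeneous component multiplied by the appropriate sign. We need the ideal in the form $Au=uA$, so that the right-module associativity law from the introduction applies in the correct order:
\[
 x\frown(u\smilecup b')=(x\frown u)\frown b'.
\]
This formula does not directly show membership in $\im(m_u)$. So instead we use commutativity the other way, $a=b\smilecup u$, and associativity gives
\[
 x\frown(b\smilecup u)=(x\frown b)\frown u=m_u(x\frown b)\in\im(m_u).
\]
Since $\im(m_u)$ is a $\K$-subspace, it contains the whole span. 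This proves $\CapLayer_{J_u}^1(V)\subseteq\im(m_u)$.

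The only subtle point is the ordering and sign convention: writing the ideal as $Au$ with $u$ on the right is exactly the form that makes the right action factor through $m_u$. Graded commutativity ensures that $Au$ is a two-sided ideal, so it is homogeneous and equals the ideal generated by $u$. Beyond that, the argument is routine.
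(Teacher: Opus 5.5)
Your proof is correct and is essentially the paper's argument: $u=1\smilecup u\in J_u$ gives $\im(m_u)\subseteq\CapLayer_{J_u}^1(V)$, and the module law $x\frown(b\smilecup u)=(x\frown b)\frown u=m_u(x\frown b)$ gives the reverse inclusion after taking spans. The detour through graded commutativity is unnecessary, since elements of $Au$ already have $u$ on the right. Your remark that two-sidedness makes $Au$ homogeneous is also off the mark, because homogeneity comes from $u$ being homogeneous. Neither point affects the proof.
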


\begin{proof}
Since $u\in J_u$, the right-hand side is contained in the left-hand side.
Conversely, every element of $J_u$ is a sum of terms $a\smilecup u$.  Hence
\[
 x\frown(a\smilecup u)=(x\frown a)\frown u\in\im m_u.
\]
Taking spans proves the reverse inclusion.
\end{proof}

\begin{definition}
For a graded right $A$-module $V$, we define \textit{module cap-length} of $V$ as 
\begin{equation}
\label{eq:mcl}
 \mcl_A(V):=\max\{\ell\in\N_{>0}\mid\CapLayer^\ell(V)\ne0\},
\end{equation}
with $\mcl_A(V)=0$ if $\CapLayer^1(V)=0$.  For an $A$-linear morphism
$f:V\to W$, set $\mcl_A(f):=\mcl_A(\im f)$.
\end{definition}

\begin{definition}
\label{def:induced-cap-layer-map}
Let \(f\colon V\to W\) be a degree-preserving homomorphism of
graded right \(A\)-modules. For every homogeneous ideal
\(J\subseteq A^{>0}\) and every integer \(\ell\geq 0\), define
\[
\operatorname{Cap}_{J}^{\ell}(f)
:=
\left.f\right|_{\operatorname{Cap}_{J}^{\ell}(V)}
\colon
\operatorname{Cap}_{J}^{\ell}(V)
\longrightarrow
\operatorname{Cap}_{J}^{\ell}(W).
\]
This restriction is well defined because \(f\) is \(A\)-linear.
Indeed, for every \(x\in V\) and \(u_1,\ldots,u_\ell\in J\),
\[
\begin{aligned}
f\bigl(
 ((x\frown u_1)\cdots)\frown u_\ell
\bigr)
=
((f(x)\frown u_1)\cdots)\frown u_\ell
\in
\operatorname{Cap}_{J}^{\ell}(W).
\end{aligned}
\]
For \(\ell=0\), this convention gives
\[
\operatorname{Cap}_{J}^{0}(f)=f.
\]
\end{definition}

\begin{lemma}[Epi--mono behavior]
\label{lem:epi-mono}
Let $f:V\to W$ be $A$-linear.
\begin{enumerate}
\item If $f$ is injective, then $\mcl_A(V)\le\mcl_A(W)$.
\item If $f$ is surjective, then $\mcl_A(V)\ge\mcl_A(W)$.
\end{enumerate}
\end{lemma}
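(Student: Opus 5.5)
The plan is to compare cap layers along $f$ depth by depth, using the restricted maps $\CapLayer^\ell_{\mideal}(f)$ of \cref{def:induced-cap-layer-map}. Both parts reduce to one claim: for each $\ell\ge1$, $\CapLayer^\ell(V)\neq0$ implies $\CapLayer^\ell(W)\neq0$ when $f$ is injective, and conversely when $f$ is surjective. The layers form a descending flag, so $\mcl_A$ is exactly the largest $\ell$ with nonzero layer, or $0$ if there is none. The claim therefore gives the inequalities, including the boundary case where the relevant layer at depth one vanishes and the length is $0$.

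For (1), assume $f$ is injective. I would use the identity
\[
f\bigl(((x\frown u_1)\cdots)\frown u_\ell\bigr)=((f(x)\frown u_1)\cdots)\frown u_\ell
\]
recorded in \cref{def:induced-cap-layer-map}. It shows that $f$ maps $\CapLayer^\ell(V)$ into $\CapLayer^\ell(W)$. Since $f$ is injective, its restriction to $\CapLayer^\ell(V)$ is injective. Hence a nonzero element of $\CapLayer^\ell(V)$ has nonzero image in $\CapLayer^\ell(W)$. Taking $\ell=\mcl_A(V)$ gives $\mcl_A(W)\ge\mcl_A(V)$; the case $\mcl_A(V)=0$ is trivial.

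For (2), assume $f$ is surjective. The step to verify is that $f(\CapLayer^\ell(V))=\CapLayer^\ell(W)$. One inclusion is the one just shown. For the reverse inclusion, take a spanning element $((y\frown u_1)\cdots)\frown u_\ell$ of $\CapLayer^\ell(W)$ and choose $x\in V$ with $f(x)=y$. The same identity shows this element is $f$ of a spanning element of $\CapLayer^\ell(V)$. Since $f$ is linear, the spans correspond.

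It follows that $\CapLayer^\ell(W)\neq0$ forces $\CapLayer^\ell(V)\neq0$. Taking $\ell=\mcl_A(W)$ gives $\mcl_A(V)\ge\mcl_A(W)$.

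No step is a genuine obstacle. The only points needing care are the degree-zero convention in the definition of $\mcl_A$, and using finite depth of the flag so that the maximum exists.
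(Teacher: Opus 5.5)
Your proof is correct and follows essentially the same route as the paper. The paper's argument is the identity $f(\CapLayer^\ell(V))=\CapLayer^\ell(\im f)$ from $A$-linearity: injectivity carries a nonzero layer element forward, and surjectivity gives $f(\CapLayer^\ell(V))=\CapLayer^\ell(W)$, so nonvanishing pulls back.
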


\begin{proof}
For every $\ell$, $A$-linearity gives
\[
 f\bigl(\CapLayer^\ell(V)\bigr)=\CapLayer^\ell(\im f).
\]
If $f$ is injective, a nonzero vector in $\CapLayer^\ell(V)$ has nonzero image in
$\CapLayer^\ell(W)$.  If $f$ is surjective, the displayed equality becomes
$f(\CapLayer^\ell(V))=\CapLayer^\ell(W)$, so nonvanishing on the right implies
nonvanishing on the left.
\end{proof}

\begin{proposition}[Categorical cap-length]
\label{prop:categorical}
For composable $A$-linear maps $V\xrightarrow{f}W\xrightarrow{g}Z$,
\[
 \mcl_A(gf)\le\min\{\mcl_A(f),\mcl_A(g)\}.
\]
\end{proposition}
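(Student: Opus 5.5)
We reduce both inequalities to \cref{lem:epi-mono}, applied to the images involved.

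For $\mcl_A(gf)\le\mcl_A(g)$: note that $\im(gf)=g(\im f)\subseteq\im g$. The inclusion $\im(gf)\hookrightarrow\im g$ is an injective $A$-linear map. Part (1) of \cref{lem:epi-mono} then gives $\mcl_A(\im gf)\le\mcl_A(\im g)$, which by definition is $\mcl_A(gf)\le\mcl_A(g)$.

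For $\mcl_A(gf)\le\mcl_A(f)$: restrict $g$ to $\im f$. This gives a surjective $A$-linear map
\[
 g|_{\im f}\colon \im f\longrightarrow g(\im f)=\im(gf).
\]
Its image is an $A$-submodule because $g$ is $A$-linear. Part (2) of \cref{lem:epi-mono} then yields $\mcl_A(\im f)\ge\mcl_A(\im gf)$, that is, $\mcl_A(f)\ge\mcl_A(gf)$.

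Taking the minimum of the two bounds gives the proposition. There is no real obstacle. The only points to check are that images of $A$-linear maps are graded $A$-submodules, so that $\mcl_A$ is defined on them, and that the convention $\mcl_A=0$ when $\CapLayer^1=0$ is compatible with the lemma. The latter holds because the lemma's proof compares nonvanishing of $\CapLayer^\ell$ level by level for all $\ell\ge1$, and that comparison is unaffected by the convention.
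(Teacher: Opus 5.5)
Your proposal is correct and is essentially the paper's own proof. The paper likewise uses the $A$-linear surjection $\im f\twoheadrightarrow\im(gf)$ and the $A$-linear injection $\im(gf)\hookrightarrow\im g$, and then applies \cref{lem:epi-mono} to each.
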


\begin{proof}
There is an $A$-linear surjection $\im f\twoheadrightarrow\im(gf)$ and an
$A$-linear injection $\im(gf)\hookrightarrow\im g$.  Then
\cref{lem:epi-mono} concludes the desired inequality.
\end{proof}

Let
\[
 \cl(A):=\max\{\ell\ge1\mid
 u_1\smilecup\cdots\smilecup u_\ell\ne0
 \text{ for some }u_i\in A^{>0}\}
\]
denote the \textit{cup-length} of $A$ with respect to the product $\smilecup$. 
Then $\mcl_A(V)\le\cl(A)$ for every $V$. If we regard \(A\) as its \textit{right regular module} \(A_A\), whose
right action is given by multiplication:
\[
a\cdot u:=a\smile u,
\qquad a,u\in A,
\]
then
\[
\operatorname{Cap}^{\ell}(A_A)
=
A\mathfrak m^\ell
=
\mathfrak m^\ell,
\]
and consequently
\[
\mcl_A(A_A)
=
\max\{\ell\geq0\mid\mathfrak m^\ell\neq0\}
=
\cl(A).
\]

\subsection{Persistence modules of graded \(A\)-modules}

Let
\[
A=\bigoplus_{r\geq 0} A^r
\]
be a finite-dimensional, nonnegatively graded,
graded-commutative algebra over a field \(\mathbb{K}\), satisfying
$
A^0=\mathbb{K}\cdot 1.
$

A graded right \(A\)-module is a
\(\mathbb{Z}\)-graded \(\mathbb{K}\)-vector space
\[
V=\bigoplus_{q\in\mathbb{Z}} V_q
\]
equipped with a right \(A\)-action
\[
V_q\times A^r\longrightarrow V_{q-r},
\qquad
(x,u)\longmapsto x\frown u,
\]
for all \(q\in\mathbb{Z}\) and \(r\geq 0\).
Thus, multiplication by an element of cohomological degree \(r\)
lowers the module degree by \(r\). This is the grading convention
used for the Floer cap action.

We denote by
\[
\mathrm{GrMod}_{A}
\]
the category whose objects are graded right \(A\)-modules with the
above grading convention. A morphism
\[
F\colon V\longrightarrow W
\]
in \(\mathrm{GrMod}_{A}\) is a degree-preserving right
\(A\)-linear map. Explicitly, it satisfies
\[
F(V_q)\subseteq W_q
\qquad
\text{for every }q\in\mathbb{Z},
\]
and
\[
F(x\cdot u)=F(x)\cdot u
\]
for all \(x\in V\) and \(u\in A\).

We regard the partially ordered set
\(
(\mathbb{R},\leq)
\)
as a category whose objects are real numbers and in which there is
a unique morphism \(a\to b\) precisely when \(a\leq b\).

\begin{definition}\label{def:persistence-module}
An \textit{\(\mathbb{R}\)-indexed persistence module} of graded right
\(A\)-modules is a functor
\[
\mathcal{V}\colon
(\mathbb{R},\leq)\longrightarrow \mathrm{GrMod}_{A}.
\]
Equivalently, it consists of the following data:
\begin{enumerate}
\item for every \(a\in\mathbb{R}\), a graded right \(A\)-module
\[
V_a:=\mathcal{V}(a);
\]

\item for every \(a\leq b\), a degree-preserving right
\(A\)-linear structure map
\[
v_a^b
:=
\mathcal{V}(a\leq b)
\colon V_a\longrightarrow V_b;
\]

\item the identities
\[
v_a^a=\operatorname{id}_{V_a}
\]
and
\[
v_b^c\circ v_a^b=v_a^c
\qquad
\text{for all }a\leq b\leq c.
\]
\end{enumerate}

We write
\[
\mathcal{V}
=
\bigl(V_a,v_a^b\bigr)_{a\leq b}
\]
when the objects and structure maps are to be displayed explicitly.
\end{definition}

\begin{remark}
The functorial conditions in
\cref{def:persistence-module} define an arbitrary persistence module.
Following Patel \cite[Definition~2.2]{PatelGPD}, we call such a
module \textit{constructible} when it is constant up to isomorphism away
from a finite set of critical values.
\end{remark}

\begin{definition}[\(S\)-constructible persistence module]
\label{def:constructible-persistence-module}
Let
\[
S=\{s_1<\cdots<s_N\}\subset\mathbb{R}
\]
be finite.  A persistence module
\[
\mathcal V=(V_a,v_a^b)_{a\leq b}
\colon
(\mathbb R,\leq)\longrightarrow\mathrm{GrMod}_A
\]
is called \textit{\(S\)-constructible} if:

\begin{enumerate}
\item each graded component \((V_a)_q\) is finite-dimensional over
\(\mathbb K\);

\item for every \(a\leq b\) belonging to the same connected
component of \(\mathbb R\setminus S\), the structure map
\[
v_a^b\colon V_a\longrightarrow V_b
\]
is an isomorphism of graded right \(A\)-modules.
\end{enumerate}

The elements of \(S\) are called \textit{critical values}.  We say that
\(\mathcal V\) is \textit{constructible} if it is \(S\)-constructible for
some finite set \(S\subset\mathbb R\).
\end{definition}

\medskip
\noindent\textbf{Interval modules and barcodes.}
We next recall the barcode of an ordinary one-parameter persistence
module over $\mathbb K$; see
\cite{CrawleyBoevey,ChazalEtAl}.  Let
\(
 \mathrm{Vect}_{\mathbb K}
\)
denote the category of $\mathbb K$-vector spaces, and let
$\mathcal U=(U_a,u_a^b)_{a\leq b}\colon
(\mathbb R,\leq)\to\mathrm{Vect}_{\mathbb K}$ be a persistence
module.

For a nonempty interval $I\subseteq\mathbb R$, the \emph{interval
module} $\mathbb K_I$ is defined by
\[
 (\mathbb K_I)_a=
 \begin{cases}
  \mathbb K,& a\in I,\\
  0,& a\notin I,
 \end{cases}
\]
and, for $a\leq b$, by the structure map
\[
 \iota_a^b=
 \begin{cases}
  \operatorname{id}_{\mathbb K},& a,b\in I,\\
  0,&\text{otherwise}.
 \end{cases}
\]
If $\mathcal U$ is pointwise finite-dimensional, then the interval
decomposition theorem of Crawley--Boevey
\cite{CrawleyBoevey} gives an isomorphism
\begin{equation}
\label{eq:interval-decomposition}
 \mathcal U\cong\bigoplus_{I\in\operatorname{barc}(\mathcal U)}
 \mathbb K_I,
\end{equation}
where the collection of intervals is a multiset and is uniquely
determined up to permutation.  This multiset is called the
\emph{barcode} of $\mathcal U$ and is denoted by
\[
 \operatorname{barc}(\mathcal U).
\]
In particular, every pointwise finite-dimensional constructible
persistence module admits such a barcode.  Repeated intervals in a
barcode are always counted with multiplicity.

\medskip
\noindent\textbf{The bottleneck distance.}
For a nonempty interval $I\subseteq\mathbb R$, write
\[
 b(I):=\inf I\in\mathbb R\cup\{-\infty\},
 \qquad
 d(I):=\sup I\in\mathbb R\cup\{+\infty\}.
\]
The symbols $b(I)$ and $d(I)$ record the numerical endpoints; the
open or closed endpoint decorations do not affect the numerical errors
below.  Thus intervals that differ only by endpoint decoration may
have distance zero, as is customary for the interleaving/bottleneck
pseudometric on real-indexed persistence modules
\cite{ChazalEtAl}.

For intervals $I$ and $J$, define their endpoint cost by
\[
 \delta_\infty(I,J):=
 \begin{cases}
  \max\{
    |b(I)-b(J)|,\;
    |d(I)-d(J)|
\},
   & I,J\text{ bounded},\\[1mm]
  |b(I)-b(J)|,
   & I,J\text{ unbounded above only},\\[1mm]
  |d(I)-d(J)|,
   & I,J\text{ unbounded below only},\\[1mm]
  0,
   & I=J=\mathbb R,\\[1mm]
  +\infty,&\text{otherwise}.
 \end{cases}
\]
The cost of leaving $I$ unmatched, equivalently of matching it to
the diagonal (a nonempty closed interval whose two endpoints of  are the same), is
\[
 \delta_\Delta(I):=
 \begin{cases}
  \dfrac{d(I)-b(I)}{2},& I\text{ bounded},\\[2mm]
  +\infty,&\text{otherwise}.
 \end{cases}
\]
Thus an interval unbounded in either direction cannot be matched to
the diagonal.

Let $\mathcal B$ and $\mathcal C$ be barcodes.  A \emph{partial
matching} $\mu$ between them is a bijection
\[
 \mu\colon\mathcal B'\longrightarrow\mathcal C'
\]
between submultisets $\mathcal B'\subseteq\mathcal B$ and
$\mathcal C'\subseteq\mathcal C$.  Its error is
\[
 \operatorname{Err}(\mu):=
 \max\left\{
 \begin{array}{l}
  \displaystyle
  \sup_{I\in\mathcal B'}\delta_\infty(I,\mu(I)),\\[2mm]
  \displaystyle
  \sup_{I\in\mathcal B\setminus\mathcal B'}\delta_\Delta(I),\\[2mm]
  \displaystyle
  \sup_{J\in\mathcal C\setminus\mathcal C'}\delta_\Delta(J)
 \end{array}
 \right\},
\]
where the supremum over an empty multiset is understood to be zero.
The \emph{bottleneck distance} between $\mathcal B$ and
$\mathcal C$ is
\begin{equation}
\label{eq:bottleneck-distance}
 d_{\mathrm B}(\mathcal B,\mathcal C)
 :=\inf_{\mu}\operatorname{Err}(\mu),
\end{equation}
where the infimum runs over all partial matchings.  In particular, a
bounded bar $I$ may be left unmatched at cost
$(d(I)-b(I))/2$, whereas a bar unbounded above must be matched to
another bar unbounded above.

\begin{definition}
For $a\le b$, define
\[
 \PCL_{\cV}([a,b]):=\mcl_A(\im v_a^b).
\]
For a homogeneous ideal $J\subseteq\mideal$, depth $\ell\ge0$, and degree
$q$, define the fixed layer
\[
 \Psi_{J}^{\ell,q}(\cV)_a:=\bigl(\CapLayer_J^\ell(V_a)\bigr)_q.
\]
\end{definition}

\begin{lemma}[Functoriality of cap layers]
\label{lem:layer-functor}
Let \(f\colon V\to W\) be a degree-preserving \(A\)-linear map.
For every homogeneous ideal \(J\subseteq A^{>0}\) and every
\(\ell\geq0\), the induced map
\[
\CapLayer_J^\ell(f)
\colon
\CapLayer_J^\ell(V)
\longrightarrow
\CapLayer_J^\ell(W)
\]
satisfies
\begin{equation}
\label{eq:cap-functor}
\operatorname{im}\bigl(\CapLayer_J^\ell(f)\bigr)
=
f\bigl(\CapLayer_J^\ell(V)\bigr)
=
\CapLayer_J^\ell(\operatorname{im}f).
\end{equation}
In particular,
\[
f\bigl(\CapLayer_J^\ell(V)\bigr)
\subseteq
\CapLayer_J^\ell(W).
\]
\end{lemma}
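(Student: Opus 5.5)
The plan is to prove the two equalities in \eqref{eq:cap-functor} separately, working only with the spanning sets in \eqref{eq:explicit-cap-layer}. The first equality, $\operatorname{im}(\CapLayer_J^\ell(f))=f(\CapLayer_J^\ell(V))$, is a tautology: by \cref{def:induced-cap-layer-map}, $\CapLayer_J^\ell(f)$ is the restriction of $f$ to $\CapLayer_J^\ell(V)$. What needs checking is that the codomain $\CapLayer_J^\ell(W)$ is legitimate. That containment follows from the second equality, since $\im f\subseteq W$ is an $A$-submodule and the span defining $\CapLayer_J^\ell(\im f)$ is visibly contained in the span defining $\CapLayer_J^\ell(W)$.

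For the second equality I will use $A$-linearity of $f$ on generators. By induction on $\ell$, or directly by iterating $F(x\cdot u)=F(x)\cdot u$, we get
\[
 f\bigl(((x\frown u_1)\cdots)\frown u_\ell\bigr)=((f(x)\frown u_1)\cdots)\frown u_\ell .
\]
Since $f$ is $\K$-linear, the image of a span is the span of the images. Hence $f(\CapLayer_J^\ell(V))$ is the $\K$-span of the words $((y\frown u_1)\cdots)\frown u_\ell$ with $y=f(x)$ and $u_i\in J$. As $x$ ranges over $V$, $y$ ranges over all of $\im f$, so this span is exactly $\CapLayer_J^\ell(\im f)$.

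The case $\ell=0$ is immediate, since both sides equal $\im f$. Degree bookkeeping poses no issue because $f$ preserves degree and the layers are defined as spans of homogeneous words, so the equality also holds degreewise. No step is a genuine obstacle. The only point requiring care is to note that $\im f$ is an $A$-submodule, so that $\CapLayer_J^\ell(\im f)$ makes sense and sits inside $\CapLayer_J^\ell(W)$.
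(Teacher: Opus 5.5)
Your proposal is correct and follows essentially the same route as the paper: the first equality holds by definition of the restriction, and the second follows from $A$-linearity applied to the generating words. The paper checks the two inclusions separately, lifting each $z\in\operatorname{im}f$ to some $x$ with $f(x)=z$, while you package both inclusions into the observation that $f$ of a span is the span of the images; this is the same argument.
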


\begin{proof}
By the definition of the restricted map,
\[
\operatorname{im}\bigl(\CapLayer_J^\ell(f)\bigr)
=
f\bigl(\CapLayer_J^\ell(V)\bigr).
\]
It remains to prove
\[
f\bigl(\CapLayer_J^\ell(V)\bigr)
=
\CapLayer_J^\ell(\operatorname{im}f).
\]

Let
\[
y=
((x\frown u_1)\cdots)\frown u_\ell
\]
be a generator of \(\CapLayer_J^\ell(V)\), where
\(x\in V\) and \(u_1,\ldots,u_\ell\in J\). Since \(f\) is
\(A\)-linear,
\[
f(y)
=
((f(x)\frown u_1)\cdots)\frown u_\ell,
\]
which belongs to \(\CapLayer_J^\ell(\operatorname{im}f)\).
Hence
\[
f\bigl(\CapLayer_J^\ell(V)\bigr)
\subseteq
\CapLayer_J^\ell(\operatorname{im}f).
\]

Conversely, every generator of
\(\CapLayer_J^\ell(\operatorname{im}f)\) has the form
\[
((z\frown u_1)\cdots)\frown u_\ell
\]
with \(z\in\operatorname{im}f\). Choose \(x\in V\) such that
\(z=f(x)\). Then
\[
\begin{aligned}
((z\frown u_1)\cdots)\frown u_\ell
&=
((f(x)\frown u_1)\cdots)\frown u_\ell\\
&=
f\bigl(
 ((x\frown u_1)\cdots)\frown u_\ell
\bigr),
\end{aligned}
\]
so this generator lies in
\(f(\CapLayer_J^\ell(V))\). This proves the reverse inclusion.
\end{proof}

\begin{proposition}[Interval anti-monotonicity]
\label{prop:anti-recover}
If $c\le a\le b\le d$, then
\[
 \PCL_{\cV}([a,b])\ge\PCL_{\cV}([c,d]).
\]
Moreover,
\begin{equation}
\label{eq:recover}
 \PCL_{\cV}([a,b])=
 \max\{\ell\ge1\mid
 \im(\CapLayer^\ell(V_a)\to\CapLayer^\ell(V_b))\ne0\}.
\end{equation}
\end{proposition}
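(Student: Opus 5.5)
The plan is to derive anti-monotonicity from \cref{prop:categorical} using the factorization supplied by functoriality. Then I would obtain \eqref{eq:recover} as a direct reformulation of \cref{lem:layer-functor}.

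For the first claim, fix $c\le a\le b\le d$. By the functor identities in \cref{def:persistence-module},
\[
 v_c^d=v_b^d\circ v_a^b\circ v_c^a .
\]
Applying \cref{prop:categorical} to the composite $(v_b^d\circ v_a^b)\circ v_c^a$ gives
\[
 \mcl_A(v_c^d)\le\mcl_A(v_b^d\circ v_a^b).
\]
Applying it once more to $v_b^d\circ v_a^b$ gives
\[
 \mcl_A(v_b^d\circ v_a^b)\le\mcl_A(v_a^b).
\]
Since $\mcl_A(f)=\mcl_A(\im f)$ by definition, this chain reads $\PCL_{\cV}([c,d])\le\PCL_{\cV}([a,b])$. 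The edge cases $c=a$ or $b=d$ cause no trouble, because $v_a^a=\id$.

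For \eqref{eq:recover}, apply \cref{lem:layer-functor} to $f=v_a^b$. This yields
\[
 \im\bigl(\CapLayer^\ell(V_a)\to\CapLayer^\ell(V_b)\bigr)=v_a^b\bigl(\CapLayer^\ell(V_a)\bigr)=\CapLayer^\ell(\im v_a^b)
\]
for every $\ell\ge1$, taking $J=\mideal$. Hence the image on the left is nonzero exactly when $\CapLayer^\ell(\im v_a^b)\ne0$. Taking the maximum over $\ell\ge1$ of the two equivalent conditions gives $\mcl_A(\im v_a^b)$, which is the right-hand side of \eqref{eq:recover}.

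The only delicate point is making the maximum well defined. The set of admissible $\ell$ is bounded by $\cl(A)$, since $\CapLayer^\ell(V)=0$ for $\ell>\cl(A)$. The set is downward closed, since the flag is descending. When the set is empty, we use the stated convention that the maximum is $0$, matching the convention for $\mcl_A$. I expect no genuine obstacle; the main care is bookkeeping these conventions consistently on both sides.
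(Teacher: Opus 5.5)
Your proposal is correct and follows the paper's own proof: anti-monotonicity comes from \cref{prop:categorical} applied to the factorization $v_c^d=v_b^d\circ v_a^b\circ v_c^a$, and \eqref{eq:recover} comes from \cref{lem:layer-functor} with $f=v_a^b$ and $J=\mideal$. Your remarks that the admissible set of $\ell$ is bounded by $\cl(A)$ and that an empty set gives the value $0$ state explicitly what the paper leaves implicit.
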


\begin{proof}
The first assertion follows from \cref{prop:categorical} applied to the
factorization of $v_c^d$ through $v_a^b$.  By
\cref{lem:layer-functor},
\[
 \im(\CapLayer^\ell(V_a)\to\CapLayer^\ell(V_b))
 =\CapLayer^\ell(\im v_a^b),
\]
which is nonzero exactly when $\mcl_A(\im v_a^b)\ge\ell$.
\end{proof}

\subsection{Interleavings, erosion distance, and barcodes}
\label{subsec:interleavings-erosion}

We first recall the definitions of erosion distance and
interleaving distance.  Throughout this subsection, let
\[
\operatorname{Int}
:=
\bigl\{[a,b]\subset\mathbb{R}\mid a\leq b\bigr\}
\]
denote the set of nonempty closed bounded intervals in
\(\mathbb{R}\), ordered by inclusion.

Let \((P,\geq_P)\) be a partially ordered set.  A function
\[
p\colon \operatorname{Int}\longrightarrow P
\]
is called \textit{anti-monotone} if
\[
I\subseteq J
\quad\Longrightarrow\quad
p(I)\geq_P p(J).
\]
For \(\varepsilon\geq 0\) and \(I=[a,b]\in\operatorname{Int}\),
define the \textit{\(\varepsilon\)-enlargement} of \(I\) by
\[
I^\varepsilon
:=
[a-\varepsilon,b+\varepsilon].
\]

We use the standard notions of interleaving and erosion distance;
see, for example,
\cite[Definitions~3.1 and~3.2]{MemoliStefanouZhou}.
\begin{definition}[Erosion distance]
\label{def:erosion-distance}
Let
\[
p,q\colon\operatorname{Int}\longrightarrow P
\]
be anti-monotone functions.  We say that \(p\) and \(q\) are
\textit{\(\varepsilon\)-eroded} if, for every
\(I\in\operatorname{Int}\),
\[
p(I)\geq_P q(I^\varepsilon)
\qquad\text{and}\qquad
q(I)\geq_P p(I^\varepsilon).
\]
Their \textit{erosion distance} is
\[
d_{\mathrm E}(p,q)
:=
\inf
\left\{
\varepsilon\geq 0
\ \middle|\
p\text{ and }q\text{ are }\varepsilon\text{-eroded}
\right\}.
\]
If no such \(\varepsilon\) exists, we set
\[
d_{\mathrm E}(p,q):=+\infty.
\]
\end{definition}

Let
\[
\mathcal V=(V_a,v_a^b)_{a\leq b},
\qquad
\mathcal W=(W_a,w_a^b)_{a\leq b}
\]
be persistence modules in \(\mathrm{GrMod}_A\).

For \(\varepsilon\geq0\), the
\(\varepsilon\)-translation of \(\mathcal V\) is the persistence
module \(T_\varepsilon\mathcal V\) defined by
\[
(T_\varepsilon\mathcal V)_a
:=
V_{a+\varepsilon}
\]
and
\[
(T_\varepsilon v)_a^b
:=
v_{a+\varepsilon}^{b+\varepsilon},
\qquad a\leq b.
\]
There is a canonical natural transformation
\[
\eta_{\mathcal V}^{\varepsilon}
\colon
\mathcal V\Longrightarrow T_\varepsilon\mathcal V
\]
whose component at \(a\) is the structure map
\[
(\eta_{\mathcal V}^{\varepsilon})_a
=
v_a^{a+\varepsilon}.
\]

\begin{definition}[\(A\)-linear interleaving]
\label{def:A-linear-interleaving}
An \textit{\(A\)-linear \(\varepsilon\)-interleaving} between
\(\mathcal V\) and \(\mathcal W\) consists of two natural
transformations
\[
F\colon
\mathcal V\Longrightarrow T_\varepsilon\mathcal W,
\qquad
G\colon
\mathcal W\Longrightarrow T_\varepsilon\mathcal V,
\]
such that every component
\[
F_a\colon V_a\longrightarrow W_{a+\varepsilon},
\qquad
G_a\colon W_a\longrightarrow V_{a+\varepsilon}
\]
is a degree-preserving right \(A\)-linear map.

Naturality means that, for every \(a\leq b\),
\[
w_{a+\varepsilon}^{b+\varepsilon}\circ F_a
=
F_b\circ v_a^b
\]
and
\[
v_{a+\varepsilon}^{b+\varepsilon}\circ G_a
=
G_b\circ w_a^b.
\]
The interleaving identities are
\[
G_{a+\varepsilon}\circ F_a
=
v_a^{a+2\varepsilon}
\]
and
\[
F_{a+\varepsilon}\circ G_a
=
w_a^{a+2\varepsilon}
\]
for every \(a\in\mathbb{R}\).

Equivalently, in terms of natural transformations,
\[
(T_\varepsilon G)\circ F
=
\eta_{\mathcal V}^{2\varepsilon},
\qquad
(T_\varepsilon F)\circ G
=
\eta_{\mathcal W}^{2\varepsilon}.
\]
\end{definition}

The \textit{\(A\)-linear interleaving distance} is
\[
d_{\mathrm I}^{A}(\mathcal V,\mathcal W)
:=
\inf
\left\{
\varepsilon\geq0
\ \middle|\
\mathcal V\text{ and }\mathcal W
\text{ admit an \(A\)-linear }
\varepsilon\text{-interleaving}
\right\},
\]
with the convention that
\(
d_{\mathrm I}^{A}(\mathcal V,\mathcal W)=+\infty
\)
if no \(A\)-linear interleaving exists.

The persistent cap-length functions
\[
\operatorname{PCL}_{\mathcal V},
\operatorname{PCL}_{\mathcal W}
\colon
\operatorname{Int}\longrightarrow\mathbb{N}
\]
are anti-monotone by
\cref{prop:anti-recover}.  Hence their erosion distance is
well defined in the sense of
\cref{def:erosion-distance}.

\begin{theorem}[Abstract stability]
\label{thm:abstract-stability}
If \(\mathcal V\) and \(\mathcal W\) admit an \(A\)-linear
\(\varepsilon\)-interleaving, then
\[
d_{\mathrm E}
\bigl(
\operatorname{PCL}_{\mathcal V},
\operatorname{PCL}_{\mathcal W}
\bigr)
\leq\varepsilon.
\]
Consequently,
\[
d_{\mathrm E}
\bigl(
\operatorname{PCL}_{\mathcal V},
\operatorname{PCL}_{\mathcal W}
\bigr)
\leq
d_{\mathrm I}^{A}(\mathcal V,\mathcal W).
\]
Moreover, the interleaving restricts to every fixed-depth,
fixed-degree persistence module
\[
\Psi_J^{\ell,q}(\mathcal V)
\quad\text{and}\quad
\Psi_J^{\ell,q}(\mathcal W).
\]
\end{theorem}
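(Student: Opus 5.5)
The argument is a factorization through the interleaving maps, followed by \cref{prop:categorical}. Fix an $A$-linear $\varepsilon$-interleaving $(F,G)$ and a closed interval $I=[a,b]$, so that $I^\varepsilon=[a-\varepsilon,b+\varepsilon]$. We must show
\[
\PCL_{\mathcal V}([a,b])\ \ge\ \PCL_{\mathcal W}([a-\varepsilon,b+\varepsilon]),
\]
together with the symmetric inequality obtained by exchanging $\mathcal V$ and $\mathcal W$.

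\textbf{The factorization.} Using naturality and the interleaving identities, write
\[
w_{a-\varepsilon}^{b+\varepsilon}
= F_{b}\circ G_{b-\varepsilon}\circ w_{a-\varepsilon}^{b-\varepsilon}
= F_b\circ v_a^{b}\circ G_{a-\varepsilon}.
\]
The first equality is $F_{b}\circ G_{b-\varepsilon}=w_{b-\varepsilon}^{b+\varepsilon}$, combined with functoriality of $\mathcal W$. The second uses naturality of $G$, namely $G_{b-\varepsilon}\circ w_{a-\varepsilon}^{b-\varepsilon}=v_a^{b}\circ G_{a-\varepsilon}$. Thus $w_{a-\varepsilon}^{b+\varepsilon}$ factors as the composite $V$-map $v_a^b$ sandwiched between two $A$-linear maps.

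\textbf{The inequality for $\PCL$.} Apply \cref{prop:categorical} twice, once to the pair $(G_{a-\varepsilon},\,v_a^b)$ and once to $(v_a^bG_{a-\varepsilon},\,F_b)$. This gives
\[
\mcl_A(w_{a-\varepsilon}^{b+\varepsilon})\le \mcl_A(v_a^b),
\]
which is exactly the displayed inequality. Exchanging the roles of $F$ and $G$ gives the other half. Hence $\PCL_{\mathcal V}$ and $\PCL_{\mathcal W}$ are $\varepsilon$-eroded, so $d_{\mathrm E}\le\varepsilon$. Taking the infimum over all $\varepsilon$ admitting an interleaving yields the bound by $d_{\mathrm I}^A$. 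If no interleaving exists, the right-hand side is $+\infty$ and there is nothing to prove. The edge case $b-\varepsilon<a$ causes no trouble: only $a-\varepsilon\le b-\varepsilon$ is needed for the factorization, and this always holds.

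\textbf{Restriction to fixed layers.} By \cref{lem:layer-functor}, each $A$-linear degree-preserving component $F_a$ maps $\CapLayer_J^\ell(V_a)$ into $\CapLayer_J^\ell(W_{a+\varepsilon})$, and it preserves degree $q$. So the restrictions $\CapLayer_J^\ell(F_a)$ in degree $q$ define maps
\[
\Psi_J^{\ell,q}(\mathcal V)_a\to\Psi_J^{\ell,q}(\mathcal W)_{a+\varepsilon},
\]
and similarly for $G$. The structure maps of $\Psi_J^{\ell,q}$ are themselves restrictions of $v_a^b$ and $w_a^b$. Restriction of maps to subspaces commutes with composition, so every naturality square and every interleaving identity survives restriction. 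The restricted maps therefore form a $\K$-linear $\varepsilon$-interleaving of the fixed-depth, fixed-degree modules.

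The main point needing care is the factorization: the indices must be chosen so that each piece is a genuine structure map or a component of $F$ or $G$. The rest is formal.
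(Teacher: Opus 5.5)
Your proposal is correct and follows essentially the same route as the paper. Both arguments use the factorization $w_{a-\varepsilon}^{b+\varepsilon}=F_b\circ v_a^b\circ G_{a-\varepsilon}$, then \cref{prop:categorical}, then restriction of the $A$-linear interleaving to the fixed-depth, fixed-degree layers. The only difference is in how the factorization is derived: you use naturality of $G$ and the interleaving identity at $b-\varepsilon$, while the paper uses naturality of $F$ and the identity at $a-\varepsilon$.
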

\begin{proof}
Fix \(I=[a,b]\in\operatorname{Int}\).  By naturality and the
interleaving identities, the composite
\[
W_{a-\varepsilon}
\xrightarrow{\,G_{a-\varepsilon}\,}
V_a
\xrightarrow{\,v_a^b\,}
V_b
\xrightarrow{\,F_b\,}
W_{b+\varepsilon}
\]
is the structure map
\[
w_{a-\varepsilon}^{b+\varepsilon}.
\]
Indeed,
\[
\begin{aligned}
F_b\circ v_a^b\circ G_{a-\varepsilon}
&=
w_{a+\varepsilon}^{b+\varepsilon}
 \circ F_a\circ G_{a-\varepsilon}\\
&=
w_{a+\varepsilon}^{b+\varepsilon}
 \circ w_{a-\varepsilon}^{a+\varepsilon}\\
&=
w_{a-\varepsilon}^{b+\varepsilon}.
\end{aligned}
\]
Since
\[
I^\varepsilon
=
[a-\varepsilon,b+\varepsilon],
\]
the categorical monotonicity of module cap-length gives
\[
\operatorname{PCL}_{\mathcal W}(I^\varepsilon)
\leq
\operatorname{PCL}_{\mathcal V}(I).
\]
Exchanging \(\mathcal V\) and \(\mathcal W\) gives
\[
\operatorname{PCL}_{\mathcal V}(I^\varepsilon)
\leq
\operatorname{PCL}_{\mathcal W}(I).
\]
Thus the two persistent cap-length functions are
\(\varepsilon\)-eroded, and hence
\[
d_{\mathrm E}
\bigl(
\operatorname{PCL}_{\mathcal V},
\operatorname{PCL}_{\mathcal W}
\bigr)
\leq\varepsilon.
\]

For the last assertion, the \(A\)-linearity of \(F_a\) gives
\[
F_a
\bigl(
\operatorname{Cap}_J^\ell(V_a)
\bigr)
\subseteq
\operatorname{Cap}_J^\ell(W_{a+\varepsilon}),
\]
and similarly,
\[
G_a
\bigl(
\operatorname{Cap}_J^\ell(W_a)
\bigr)
\subseteq
\operatorname{Cap}_J^\ell(V_{a+\varepsilon}).
\]
Therefore \(F\) and \(G\) restrict to natural transformations
between the fixed-depth modules.  Since they preserve the grading,
they further restrict to the degree-\(q\) components.  The
interleaving identities remain valid after these restrictions.
\end{proof}

Assume the fixed layers are constructible and pointwise finite-dimensional.
Set
\[
 \Bcap_{J,\ell,q}(\cV):=
 \barc\bigl(\Psi_J^{\ell,q}(\cV)\bigr).
\]
For $J=\mideal$, abbreviate $\Bcap_{\ell,q}(\cV)$.

\begin{theorem}[Barcode stability and scalar recovery]
\label{thm:barcode}
For the total layers,
\[
 \PCL_{\cV}([a,b])=
 \max\left\{\ell\ge1\ \middle|\
 \begin{array}{c}
 \text{some bar in }\bigsqcup_q\Bcap_{\ell,q}(\cV)\\
 \text{contains }[a,b]
 \end{array}\right\}.
\]
If $\cV$ and $\cW$ are $A$-linearly $\eps$-interleaved, then
\[
 \sup_{J,\ell,q}d_{\mathrm B}
 \bigl(\Bcap_{J,\ell,q}(\cV),\Bcap_{J,\ell,q}(\cW)\bigr)
 \le\eps.
\]
\end{theorem}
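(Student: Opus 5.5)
The proof splits into two independent parts: the scalar recovery formula, and the bottleneck stability.

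\emph{Scalar recovery.} The starting point is \eqref{eq:recover} from \cref{prop:anti-recover}:
\[
\PCL_{\cV}([a,b])=\max\{\ell\ge1\mid \im(\CapLayer^\ell(V_a)\to\CapLayer^\ell(V_b))\ne0\}.
\]
The map $\CapLayer^\ell(V_a)\to\CapLayer^\ell(V_b)$ is the structure map of the persistence module $\bigoplus_q\Psi_{\mideal}^{\ell,q}(\cV)$, by \cref{def:induced-cap-layer-map}. Its image is nonzero exactly when, for some $q$, the degree-$q$ structure map $\Psi_{\mideal}^{\ell,q}(\cV)_a\to\Psi_{\mideal}^{\ell,q}(\cV)_b$ is nonzero. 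Each fixed layer is pointwise finite-dimensional and constructible, so it has an interval decomposition \eqref{eq:interval-decomposition}. In such a decomposition the rank of the structure map from $a$ to $b$ equals the number of bars (with multiplicity) containing both $a$ and $b$. Since bars are intervals, a bar contains both $a$ and $b$ if and only if it contains $[a,b]$. Substituting this into \eqref{eq:recover} gives the stated formula. If no bar contains $[a,b]$ for any $\ell\ge1$, both sides equal $0$ by the conventions.

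\emph{Stability.} Here I invoke the last assertion of \cref{thm:abstract-stability}: an $A$-linear $\eps$-interleaving restricts to an ordinary $\eps$-interleaving of the $\K$-vector-space persistence modules $\Psi_J^{\ell,q}(\cV)$ and $\Psi_J^{\ell,q}(\cW)$, for every homogeneous $J$, every $\ell$ and every $q$. Then I apply the algebraic stability (isometry) theorem for pointwise finite-dimensional persistence modules (Chazal et al., \cite{ChazalEtAl}). This gives
\[
d_{\mathrm B}\bigl(\Bcap_{J,\ell,q}(\cV),\Bcap_{J,\ell,q}(\cW)\bigr)\le\eps
\]
for each triple $(J,\ell,q)$. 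Taking the supremum over $(J,\ell,q)$ then yields the stated bound.

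The main point needing care is the compatibility of the bottleneck distance defined here with the one in the stability theorem. The cost $\delta_\infty$ must agree on infinite bars, and endpoint decorations are ignored. Decorations cause no trouble, because the stability theorem is stated at the level of numerical endpoints, in the interleaving pseudometric sense. For infinite bars, an $\eps$-interleaving forces bars unbounded above to be matched with bars unbounded above, and similarly below; this is exactly what the $+\infty$ conventions encode. I also check that the standard matching is $\eps$-close in each endpoint and that unmatched bounded bars have length at most $2\eps$. Both are the usual conclusions of the induced matching theorem (Bauer--Lesnick), which I would cite explicitly to cover these cases.
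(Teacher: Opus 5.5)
Your proposal is correct and follows essentially the same route as the paper's proof. For scalar recovery, both use \eqref{eq:recover}, split by degree, and read off nonvanishing of the structure map $a\to b$ from the interval decomposition as the existence of a bar containing $[a,b]$; for stability, both restrict the $A$-linear $\eps$-interleaving to each $\Psi_J^{\ell,q}$ and invoke the algebraic stability/isometry theorem. Your additional check that the paper's $\delta_\infty$ conventions for unbounded bars agree with those of the cited theorem is something the paper leaves to the citation.
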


\begin{proof}
We prove the scalar recovery statement and the barcode stability
statement separately.

\smallskip
\noindent
\emph{Step 1: recovery of the scalar persistent cap-length.}
Fix a closed bounded interval
\[
I=[a,b],\qquad a\leq b.
\]
By \eqref{eq:recover},
\begin{equation}
\label{eq:barcode-proof-recovery-1}
\PCL_{\cV}([a,b])
=
\max\left\{
\ell\geq1
\ \middle|\
\operatorname{im}
\left(
\CapLayer^\ell(V_a)
\longrightarrow
\CapLayer^\ell(V_b)
\right)
\neq0
\right\}.
\end{equation}
Here the map in \eqref{eq:barcode-proof-recovery-1} is
\[
\CapLayer^\ell(v_a^b)
=
\left.
v_a^b
\right|_{\CapLayer^\ell(V_a)}
\colon
\CapLayer^\ell(V_a)
\longrightarrow
\CapLayer^\ell(V_b).
\]

Since \(v_a^b\) preserves the grading, so does
\(\CapLayer^\ell(v_a^b)\).  Consequently,
\[
\operatorname{im}
\bigl(
\CapLayer^\ell(v_a^b)
\bigr)
\neq0
\]
if and only if there exists \(q\in\mathbb Z\) such that
\begin{equation}
\label{eq:barcode-proof-recovery-2}
\operatorname{im}
\left(
\bigl(\CapLayer^\ell(v_a^b)\bigr)_q
\right)
\neq0.
\end{equation}
By the definition of the fixed-depth, fixed-degree persistence
module, the map occurring in
\eqref{eq:barcode-proof-recovery-2} is precisely the structure map
of
\[
\Psi_{\mideal}^{\ell,q}(\cV)
\]
associated with the relation \(a\leq b\):
\[
\Psi_{\mideal}^{\ell,q}(\cV)(a\leq b)
=
\bigl(\CapLayer^\ell(v_a^b)\bigr)_q.
\]

Because \(\cV\) is constructible, each
\(\Psi_{\mideal}^{\ell,q}(\cV)\) is a pointwise
finite-dimensional constructible persistence module.  Hence it has
an interval decomposition
\begin{equation}
\label{eq:barcode-proof-decomposition}
\Psi_{\mideal}^{\ell,q}(\cV)
\cong
\bigoplus_{K\in\Bcap_{\ell,q}(\cV)}
\K_K,
\end{equation}
where intervals $K$ are counted with multiplicity.

Under the decomposition
\eqref{eq:barcode-proof-decomposition}, the structure map from
parameter \(a\) to parameter \(b\) is the direct sum of the
corresponding structure maps of the interval summands:
\[
\bigoplus_{K\in\Bcap_{\ell,q}(\cV)}
\left(
(\K_K)_a\longrightarrow(\K_K)_b
\right).
\]
For a single interval module \(\K_K\), this structure map is nonzero
if and only if
\[
a\in K
\qquad\text{and}\qquad
b\in K.
\]
Since \(K\) is an interval and \(a\leq b\), this is equivalent to
\[
[a,b]\subseteq K.
\]
It follows that
\begin{align}
&\operatorname{im}
\left(
\bigl(\CapLayer^\ell(v_a^b)\bigr)_q
\right)
\neq0
\nonumber\\
&\qquad\Longleftrightarrow
\text{there exists a bar }
K\in\Bcap_{\ell,q}(\cV)
\text{ such that }[a,b]\subseteq K.
\label{eq:barcode-proof-recovery-3}
\end{align}

Combining
\eqref{eq:barcode-proof-recovery-1},
\eqref{eq:barcode-proof-recovery-2}, and
\eqref{eq:barcode-proof-recovery-3}, we obtain
\[
\PCL_{\cV}([a,b])
=
\max\left\{
\ell\geq1
\ \middle|\
\begin{array}{c}
\text{there exist \(q\in\mathbb Z\) and a bar}\\
K\in\Bcap_{\ell,q}(\cV)
\text{ with }[a,b]\subseteq K
\end{array}
\right\}.
\]
Equivalently,
\[
\PCL_{\cV}([a,b])
=
\max\left\{
\ell\geq1
\ \middle|\
\begin{array}{c}
\text{some bar in }
\bigsqcup_q\Bcap_{\ell,q}(\cV)\\
\text{contains }[a,b]
\end{array}
\right\}.
\]
As usual, if no such \(\ell\geq1\) exists, the maximum on the
right-hand side is understood to be \(0\), consistently with the
definition of \(\PCL_{\cV}\).

\smallskip
\noindent
\emph{Step 2: stability of every decorated barcode.}
Assume now that \(\cV\) and \(\cW\) are \(A\)-linearly
\(\eps\)-interleaved.  Fix a homogeneous ideal
\[
J\subseteq\mideal,
\]
a depth \(\ell\geq0\), and a degree \(q\in\mathbb Z\).

Let
\[
F_a\colon V_a\longrightarrow W_{a+\eps},
\qquad
G_a\colon W_a\longrightarrow V_{a+\eps}
\]
be the components of the given \(A\)-linear
\(\eps\)-interleaving.  Since \(F_a\) and \(G_a\) are
degree-preserving and \(A\)-linear, they satisfy
\[
F_a\bigl(\CapLayer_J^\ell(V_a)\bigr)
\subseteq
\CapLayer_J^\ell(W_{a+\eps})
\]
and
\[
G_a\bigl(\CapLayer_J^\ell(W_a)\bigr)
\subseteq
\CapLayer_J^\ell(V_{a+\eps}).
\]
Therefore they restrict to degree-\(q\) maps
\[
F_a^{J,\ell,q}
\colon
\bigl(\CapLayer_J^\ell(V_a)\bigr)_q
\longrightarrow
\bigl(\CapLayer_J^\ell(W_{a+\eps})\bigr)_q
\]
and
\[
G_a^{J,\ell,q}
\colon
\bigl(\CapLayer_J^\ell(W_a)\bigr)_q
\longrightarrow
\bigl(\CapLayer_J^\ell(V_{a+\eps})\bigr)_q.
\]

Equivalently,
\[
F_a^{J,\ell,q}
\colon
\Psi_J^{\ell,q}(\cV)_a
\longrightarrow
\Psi_J^{\ell,q}(\cW)_{a+\eps}
\]
and
\[
G_a^{J,\ell,q}
\colon
\Psi_J^{\ell,q}(\cW)_a
\longrightarrow
\Psi_J^{\ell,q}(\cV)_{a+\eps}.
\]
The naturality identities for \(F\) and \(G\) remain valid after
restriction.  Moreover, restricting the original interleaving
identities gives
\[
G_{a+\eps}^{J,\ell,q}
\circ
F_a^{J,\ell,q}
=
\bigl(
\CapLayer_J^\ell(v_a^{a+2\eps})
\bigr)_q
\]
and
\[
F_{a+\eps}^{J,\ell,q}
\circ
G_a^{J,\ell,q}
=
\bigl(
\CapLayer_J^\ell(w_a^{a+2\eps})
\bigr)_q.
\]
The maps on the right-hand sides are exactly the
\(2\eps\)-structure maps of
\(\Psi_J^{\ell,q}(\cV)\) and
\(\Psi_J^{\ell,q}(\cW)\), respectively.  Thus
\[
\Psi_J^{\ell,q}(\cV)
\qquad\text{and}\qquad
\Psi_J^{\ell,q}(\cW)
\]
are ordinarily \(\eps\)-interleaved.

Since these modules are constructible and pointwise
finite-dimensional, the algebraic stability theorem, equivalently
the isometry theorem for one-parameter persistence modules, yields
\[
d_{\mathrm B}
\left(
\barc\bigl(\Psi_J^{\ell,q}(\cV)\bigr),
\barc\bigl(\Psi_J^{\ell,q}(\cW)\bigr)
\right)
\leq\eps;
\]
see \cite{ChazalEtAl,BauerLesnick}.  By the definition of the
cap-decorated barcodes, this is
\[
d_{\mathrm B}
\left(
\Bcap_{J,\ell,q}(\cV),
\Bcap_{J,\ell,q}(\cW)
\right)
\leq\eps.
\]
The estimate holds for every homogeneous ideal \(J\), every
\(\ell\geq0\), and every \(q\in\mathbb Z\).  Taking the supremum
over all triples \((J,\ell,q)\) therefore gives
\[
\sup_{J,\ell,q}
d_{\mathrm B}
\left(
\Bcap_{J,\ell,q}(\cV),
\Bcap_{J,\ell,q}(\cW)
\right)
\leq\eps.
\]
This proves both assertions.
\end{proof}

\begin{example}
\label{ex:strict-refinement}
Let $A=\K[u]/(u^4)$ and consider constant persistence modules 
\[
 V=A\oplus\K\oplus\K,
 \qquad W=A\oplus A/(u^2),
\]
where each copy of \(\mathbb K\) is regarded as the
\(A\)-module \(A/(u)\); thus \(1\in A\) acts as the identity and
\(u\) acts trivially.  The underlying vector
spaces both have dimension six and both modules have cap-length three.
However,
\[
 \dim_{\mathbb K}\CapLayer^1(V)=3,\qquad \dim_{\mathbb K}\CapLayer^1(W)=4.
\]
Thus the ordinary barcode and scalar cap-length agree, while the first total
cap layer differs. This is an example explain why the total flag is stronger than scalar cap-length. 
\end{example}
\section{Floer-theoretic input}
\label{sec:floer}

\subsection{Conventions}\label{subsec:convention}
Let $(M^{2n},\omega)$ be closed and connected and assume
\begin{equation}
\label{eq:aspherical}
 \omega|_{\pi_2(M)}=0,
 \qquad
 c_1(TM)|_{\pi_2(M)}=0.
\end{equation}
Fix a coefficient field $\K$ for which the relevant Floer complexes are
oriented; one may always take $\K=\mathbb F_2$.  Let $\Lambda M$ denote the set of all contractible loops $x:S^1:=\mathbb{R}/\mathbb{Z}\to M$. 

For a smooth time-dependent Hamiltonian
\[
F\colon S^1\times M\longrightarrow\mathbb R,
\]
write
\[
\operatorname{osc}_M(F_t)
:=
\max_{x\in M}F_t(x)-\min_{x\in M}F_t(x).
\]
Its \(L^{(1,\infty)}\)-oscillation norm is
\begin{equation}
\label{eq:Hofer-norm-Hamiltonian}
\lVert F\rVert
:=
\int_0^1\operatorname{osc}_M(F_t)\,dt
=
\int_0^1
\left(
\max_M F_t-\min_M F_t
\right)\,dt.
\end{equation}
On the space of all Hamiltonians this is a seminorm, since it is
unchanged by adding a function of time only.  On the space of
normalized Hamiltonians, i.e. $\int_M H_t \omega=0$ for all $t\in S^1$, it is a genuine norm; see \cite{Ho,Po}.  We refer to it as
the \textit{Hofer norm} of a Hamiltonian, which 
will be also reserved for the corresponding norm on
the Hamiltonian diffeomorphism group \(\operatorname{Ham}(M,\omega)\). The induced bi-invariant \textit{Hofer metric} on \(\operatorname{Ham}(M,\omega)\) is denote by $d_H$. 

For a smooth Hamiltonian
$H:S^1\times M\to\R$,  
the \textit{action functional} on $\Lambda M$ is defined as
\begin{equation}
\label{eq:action}
 \cA_H(x)=\int_{D^2}\bar x^*\omega-
 \int_0^1H_t(x(t))\,dt,
\end{equation}
where $\bar x: D^2\to M$ is a capping disk of $x$ with $\bar x|_{S^1}=x$. 
Asphericity makes the action independent of the capping disk. The \textit{action spectrum} of $H$ is defined as
\[
\mathrm{Spec}(H):= \cA_H(\mathrm{Crit}(\cA_H)).
\]
Denote by $\varphi_H^t$ the Hamiltonian flow of $H$.  
We call $H$ \textit{non-degenerate} if all contractible 1-periodic Hamiltonian orbits are non-degenerate in the sense that for each $x\in\mathrm{Crit}(\cA_H)$, $1$ is not the eigenvalue of the linearization $d\varphi_H^1:T_{x(0)}M\to 
T_{x(0)}M$. For a non-degenerate Hamiltonian $H$,   
the Floer complex $CF_*(H)$ is 
a vector space over $\mathbb{K}$ generated by contractible 1-periodic Hamiltonian orbits of $H$. 

With Schwarz's grading and Floer equation~(cf. \cite{Schwarz}), the Floer differential decreases action,
so the generators with action at most $a$ span a subcomplex
$CF_*^{(-\infty,a]}(H)$.  Write
\[
 V_a^H:=HF_*^{(-\infty,a]}(H),
 \qquad
 i_a^b:V_a^H\to V_b^H.
\]
The use of $(-\infty,a]$ fixes the endpoint convention.  Replacing it by
$(-\infty,a)$ changes only the decorations of bars and none of the estimates.

\subsection{The filtered cap-action}
Let $A=H^*(M;\K)$, with multiplication denoted by $\smilecup$.  The Floer
cap-action is denoted on the right by $\frownH$; see~\cite[Section~2.3]{Schwarz}. We give some basic properties on the filtered cap-action proved by Schwarz in~\cite{Schwarz}. 

\begin{proposition}
\label{thm:filtered-cap}
For every non-degenerate $H$, the following hold.
\begin{enumerate}
\item Each $V_a^H$ is a graded right $A$-module and every structure map
$i_a^b$ is $A$-linear.
\item On full Floer homology,
\[
 (x\frownH u)\frownH v=x\frownH(u\smilecup v).
\]
Under the PSS isomorphism $\Phi_H:H^*(M;\K)\to HF_*(H)$, with the standard
degree shift,
\begin{equation}
\label{eq:PSS-module}
 \Phi_H(\alpha)\frownH u=\Phi_H(\alpha\smilecup u).
\end{equation}
\item If $u$ has positive degree and a chain-level cap trajectory contributes
from an orbit $y$ to an orbit $z$, then
\begin{equation}
\label{eq:strict-action}
 \cA_H(z)<\cA_H(y).
\end{equation}
Consequently, if
\begin{equation}
\label{eq:hbar}
 \hbar(H):=
 \min\{\cA_H(y)-\cA_H(z)>0\mid y,z\in\mathcal P(H)\},
\end{equation}
then
\begin{equation}
\label{eq:strict-filtered-cap}
 V_a^H\frownH A^{>0}\subseteq V_{a-\hbar(H)}^H.
\end{equation}
\end{enumerate}
\end{proposition}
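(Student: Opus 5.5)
The plan is to build the filtered cap-action at chain level following Schwarz \cite[Section~2.3]{Schwarz}. Parts (1) and (3) then come from an action–energy identity for cap trajectories, and part (2) from standard gluing and compactness arguments.

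\textbf{Chain-level construction.} For a homogeneous class $u\in A^r$, choose a generic pseudocycle or cycle $C$ representing $\PD(u)$; a Morse-theoretic representative, i.e.\ an unstable manifold of a Morse function, also works. Define
\[
\frown_C: CF_k(H)\to CF_{k-r}(H)
\]
by counting Floer cylinders $w:\R\times S^1\to M$ solving the Floer equation for $H$ that are asymptotic to $y$ at $-\infty$ and to $z$ at $+\infty$ and satisfy $w(0,0)\in C$. Transversality makes this moduli space of dimension $\mu(y)-\mu(z)-r$. Counting its zero-dimensional part, and using compactness plus gluing of the one-dimensional part, shows that $\frown_C$ is a chain map; changing $C$ by a cobordism changes $\frown_C$ by a chain homotopy.

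\textbf{Filtration (part (1) and part (3)).} Asphericity gives the energy identity
\[
E(w)=\cA_H(y)-\cA_H(z)\ge0.
\]
So $\frown_C$ preserves $CF^{(-\infty,a]}$, and the chain homotopies are defined by the same kind of counts, so they do too. Hence each $V_a^H$ becomes an $A$-module. The maps $i_a^b$ are induced by inclusions of subcomplexes that commute with $\frown_C$, so they are $A$-linear.

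For strictness, suppose $y=z$ and $E(w)=0$. Then $w$ is the constant cylinder $w(s,t)=y(t)$, and its contribution lies in a moduli space of expected dimension $-r<0$. A generic $C$ of codimension $r>0$ avoids the finitely many points $y(0)$, so no constant cylinder meets the constraint. Every contributing trajectory therefore has positive energy, so $\cA_H(z)<\cA_H(y)$, and the action gap gives $\cA_H(z)\le\cA_H(y)-\hbar(H)$. This yields \eqref{eq:strict-filtered-cap} at chain level, and it passes to homology because the cap map is filtered at every level. The one genericity point to check is that $C$ should be chosen after $H$ is fixed.

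\textbf{Associativity and PSS (part (2)).} Consider the moduli space of Floer cylinders with two marked points, $w(0,0)\in C_u$ and $w(s_0,0)\in C_v$ with $s_0\in(0,\infty)$. Its boundary has two pieces. As $s_0\to\infty$ the trajectory breaks, which gives $\frown_{C_u}\circ\frown_{C_v}$ in the right order for the right action. As $s_0\to0$ the two marked points collide, which gives the constraint on $C_u\cap C_v$; this represents $\PD(u\smilecup v)$ when the cycles are transverse. Counting the ends gives a chain homotopy, hence the identity
\[
(x\frownH u)\frownH v=x\frownH(u\smilecup v).
\]
For \eqref{eq:PSS-module}, place the marked point on the PSS spiked plane and move it along the half-cylinder. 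At one end it lands on the Morse half-trajectory, which produces the classical cap product $\alpha\smilecup u$. At the other end it lies on the Floer cylinder, which produces $\Phi_H(\alpha)\frownH u$.

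\textbf{Main obstacle.} The hardest step is the compactness and transversality analysis of the two-point moduli space as $s_0\to0$. One must rule out bubbling; asphericity handles this, since there are no nonconstant spheres. One must also justify that the collision limit is exactly the intersection $C_u\cap C_v$. I would do this with Morse–Bott style constraints and transverse choices of representatives, following Schwarz's arguments essentially verbatim.
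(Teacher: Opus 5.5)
Your proposal is correct and follows essentially the same route as the paper's sketch. Both use a marked-point (spiked-cylinder) count with a codimension-$r$ constraint, the energy identity to make the chain maps and homotopies filtration-nonincreasing, generic avoidance of the finitely many constant cylinders for the strict action drop, and a two-marked-point parametrized moduli space for the module law. The only differences are cosmetic: you use a pseudocycle representative of $\PD(u)$ where the paper uses negative Morse half-trajectories to a critical point, and you sketch the PSS compatibility where the paper cites Schwarz's Proposition~2.7.
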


\noindent \textbf{Sketch of the proof.}
Following~\cite{Schwarz}, we recall the geometric argument in order to make the filtration statement
and its scope explicit.  Choose a Morse--Smale pair $(f,g)$ on $M$.  Let $p$ be a critical point of $f$ of degree $r$ which is one of generators of a
Morse cochain representing a class in $H^r(M)$.
For Floer orbits $y,z$, consider pairs $(v,\eta)$ where $v$ is a Floer
cylinder from $y$ to $z$, $\eta$ is a negative Morse half-trajectory
converging to $p$, and
\[
 v(0,0)=\eta(0).
\]
For generic data the zero-dimensional spaces define a chain map
$CF_k(H)\to CF_{k-r}(H)$.  Compactification of the one-dimensional spaces
has only Floer breaking and Morse breaking, because \eqref{eq:aspherical}
excludes sphere bubbling.  The signed boundary identity proves the chain-map
property.  The analogous moduli space with two marked constraints, together
with a parameter that changes their relative positions, proves the module
law.  These are the standard constructions in \cite{Floer} or
\cite[Section~2.3]{Schwarz}; the PSS compatibility
\eqref{eq:PSS-module} is Schwarz's Proposition~2.7, based on the PSS
identification~\cite{PSS}.

The energy identity for $v$ gives
$\cA_H(z)\le\cA_H(y)$.  If equality holds, then the energy of $v$ is zero and
$v$ is the constant cylinder over a periodic orbit.  When $r>0$, the stable
manifold used for the marked constraint has positive codimension.  After a
generic choice it avoids the finite set of evaluation points of the constant
cylinders.  Hence equality is impossible, proving \eqref{eq:strict-action}.
Since the action spectrum is finite, every strict drop is at least
$\hbar(H)$, which proves \eqref{eq:strict-filtered-cap}.  The same moduli
spaces define the operation before and after inclusion of filtered complexes,
so the structure maps are $A$-linear.  The filtered chain homotopies used for
the module law obey the same non-increasing action estimate; therefore the
module law holds already on every $V_a^H$.
\qed

\begin{remark}
If the set in \eqref{eq:hbar} is empty, then the positive-degree cap-action
is zero by \eqref{eq:strict-action}.  In the applications below, a nonzero
cap word of length $k\ge1$ automatically guarantees the existence of the
required positive gaps.
\end{remark}

\subsection{Continuation and module naturality}
For Hamiltonians $H,K$, define
\begin{equation}
\label{eq:Delta}
 \Delta^+(H,K):=
 -\int_0^1\min_M(H_t-K_t)\,dt
 =\int_0^1\max_M(K_t-H_t)\,dt.
\end{equation}
A monotone-cutoff homotopy from $H$ to $K$ defines a continuation map
\begin{equation}
\label{eq:cont-map}
 C_{H,K}^a:V_a^H\longrightarrow
 V_{a+\Delta^+(H,K)}^K.
\end{equation}

\begin{theorem}[Filtered continuation naturality]
\label{thm:continuation}
The maps \eqref{eq:cont-map} can be chosen so that they are $A$-linear on
homology.  They commute with structure maps, and the composites
$C_{K,H}C_{H,K}$ and $C_{H,K}C_{K,H}$ agree with the corresponding shifted
structure maps.  Consequently the filtered $A$-modules of $H$ and $K$ are
$\delta(H,K)$-interleaved, where
\[
 \delta(H,K)=\max\{\Delta^+(H,K),\Delta^+(K,H)\}.
\]
\end{theorem}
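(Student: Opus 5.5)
We construct the continuation maps at chain level from a monotone-cutoff homotopy and deduce the three assertions (action shift, $A$-linearity, interleaving) from energy estimates and gluing/homotopy arguments.

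\emph{Step 1: the continuation map and its action shift.} Fix a homotopy $H^s=H+\beta(s)(K-H)$ with $\beta$ a monotone cutoff, $\beta'\ge0$, $\beta=0$ for $s\ll0$ and $\beta=1$ for $s\gg0$. For a solution $v$ of the $s$-dependent Floer equation from $y\in\mathcal P(H)$ to $z\in\mathcal P(K)$, the energy identity reads
\[
0\le E(v)=\cA_H(y)-\cA_K(z)+\int_{\R}\int_0^1\beta'(s)\,(K_t-H_t)(v(s,t))\,dt\,ds .
\]
Since $\beta'\ge0$ and $\int\beta'=1$, this gives $\cA_K(z)\le\cA_H(y)+\Delta^+(H,K)$. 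Sphere bubbling is excluded by \eqref{eq:aspherical}, so the standard compactness and gluing give a chain map $CF_*^{(-\infty,a]}(H)\to CF_*^{(-\infty,a+\Delta^+(H,K)]}(K)$, and hence the map \eqref{eq:cont-map}. The same chain map restricted to smaller filtration levels commutes with inclusions, so $C_{H,K}$ commutes with the structure maps $i_a^b$.

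\emph{Step 2: $A$-linearity.} Fix a Morse cocycle representative given by a critical point $p$ of degree $r$. Consider pairs $(v,\eta)$ where $v$ is a continuation cylinder and $\eta$ is a negative Morse half-trajectory to $p$. The marked point is placed at $v(s_0,0)=\eta(0)$, and we let $s_0$ vary over $\R$, or equivalently deform the position of the cap constraint relative to the homotopy region. The ends $s_0\to-\infty$ and $s_0\to+\infty$ produce $C_{H,K}(x\frownH u)$ and $C_{H,K}(x)\frownH u$ respectively. Interior breakings give the chain-homotopy terms.

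Each configuration in this one-parameter family still satisfies the energy estimate of Step 1, because the cap constraint only cuts down the moduli space and does not change the energy identity. Hence the chain homotopy shifts action by at most $\Delta^+(H,K)$. Consequently the identity $C(x\frownH u)=C(x)\frownH u$ holds on filtered homology $V_a^H\to V_{a+\Delta^+}^K$, and not merely on full homology.

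\emph{Step 3: the interleaving identities.} By the standard gluing argument, $C_{K,H}\circ C_{H,K}$ is chain homotopic to the continuation map along the concatenated homotopy $H\to K\to H$. We then deform this concatenation through homotopies to the constant homotopy at $H$, whose continuation map is the identity. For each homotopy in the family, the action shift is bounded by
\[
\int_0^1\max_M(K-H)\,dt+\int_0^1\max_M(H-K)\,dt=\Delta^+(H,K)+\Delta^+(K,H)\le2\delta(H,K).
\]
So the homotopy is filtered with shift $2\delta$, and $C_{K,H}C_{H,K}=i_a^{a+\Delta^+(H,K)+\Delta^+(K,H)}$, which, composed with inclusion, is the $2\delta$-structure map. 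To get a symmetric $\delta$-interleaving, compose $C_{H,K}$ with the inclusion $V^K_{a+\Delta^+(H,K)}\to V^K_{a+\delta}$, and similarly for $C_{K,H}$. All maps involved are $A$-linear by Step 2, which yields an $A$-linear $\delta(H,K)$-interleaving in the sense of \cref{def:A-linear-interleaving}.

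\emph{Main obstacle.} The delicate point is the filtered control of the homotopy-of-homotopies in Step 3, and of the cap-moving homotopy in Step 2. The pointwise monotone-cutoff estimate must hold uniformly along the whole family. I would handle this by using families of the specific form $H+\beta_\lambda(s)(K-H)$, where each $\beta_\lambda$ goes up then down with total variation at most $2$, so that the energy estimate for every member of the family is bounded by $\Delta^+(H,K)+\Delta^+(K,H)$.
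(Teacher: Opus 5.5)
Your proposal follows the paper's proof essentially step for step. The filtration shift comes from the monotone-cutoff energy identity. Filtered $A$-linearity comes from a one-parameter family moving the cap constraint along the continuation cylinder; note that the end at $s_0\to+\infty$ is $C_{H,K}(x)\mathbin{\frown_K}u$, not $\frownH$. The interleaving comes from concatenation, a deformation to the constant homotopy, and padding by structure maps to $\delta(H,K)$. Your explicit family $H+\beta_\lambda(s)(K-H)$, with $\beta_\lambda$ rising and falling by at most $1$, justifies the filtered homotopy-of-homotopies more carefully than the paper, which only asserts it.

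There is one concrete error, and the paper's proof makes the same slip: the sign of the $\beta'$-term in your Step~1 energy identity. With $\cA_H(x)=\int\bar x^*\omega-\int_0^1H_t(x(t))\,dt$, take a cylinder from $y\in\mathcal P(H)$ at $s=-\infty$ to $z\in\mathcal P(K)$ at $s=+\infty$ along which action decreases in $s$. Then
\[
 \frac{d}{ds}\cA_{H^s}\bigl(v(s,\cdot)\bigr)
 =-\int_0^1\lvert\partial_sv\rvert^2\,dt
  -\int_0^1\beta'(s)\,(K_t-H_t)\bigl(v(s,t)\bigr)\,dt .
\]
Hence $E(v)=\cA_H(y)-\cA_K(z)+\int_{\R}\int_0^1\beta'(s)(H_t-K_t)(v(s,t))\,dt\,ds$, and the correct one-sided bound is
$\cA_K(z)\le\cA_H(y)+\int_0^1\max_M(H_t-K_t)\,dt=\cA_H(y)+\Delta^+(K,H)$.

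The shift claimed in \eqref{eq:cont-map} really does fail. Take $K=H+c$ with $c<0$: the continuation map is the identity on generators, while $\cA_K=\cA_H-c>\cA_H+c=\cA_H+\Delta^+(H,K)$. So a class with spectral value $a$ cannot land in $V^K_{a+\Delta^+(H,K)}$.

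The fix is to interchange the one-sided shifts, so that $C_{H,K}\colon V^H_a\to V^K_{a+\Delta^+(K,H)}$. Your Step~3 bound $\Delta^+(H,K)+\Delta^+(K,H)=\normHofer{H-K}$ is symmetric in $H$ and $K$, and so is the padding to $\delta(H,K)$. Therefore the $A$-linear $\delta(H,K)$-interleaving, which is all that is used later, goes through unchanged.
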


\begin{proof}
For the cutoff homotopy
$G_s=K+\beta(s)(H-K)$, the Floer continuation equation  has the energy
identity
\[
 0\le E(u)
 =\cA_H(x)-\cA_K(y)
 +\int_{\R}\!\int_0^1\beta'(s)
 (H_t-K_t)(u(s,t))\,dt\,ds.
\]
It follows that
\[
 \cA_K(y)\le\cA_H(x)+\Delta^+(H,K),
\]
which is precisely the filtration shift in \eqref{eq:cont-map}; see
\cite[Lemma~2.12]{Schwarz}.

To prove $A$-linearity, use the same Morse representative of a class
$u\in A$ at both ends.  Consider continuation cylinders with one marked
constraint and move the marked point from the $H$-end to the $K$-end.  The
compactification of the resulting one-dimensional parametrized moduli space
has four kinds of boundary: Floer breaking at either end and the two endpoint
positions of the marked point.  The signed boundary identity gives a chain
homotopy between
$C_{H,K}(x\frownH u)$ and $C_{H,K}(x)\frown_K u$.  Asphericity excludes
sphere bubbles.  The marked-point parameter contributes no Hamiltonian term
to the energy identity, so the homotopy has the same filtration shift
$\Delta^+(H,K)$.  Thus continuation is $A$-linear on filtered homology.

Continuation homotopy invariance identifies the composite of opposite
continuations with the continuation of the concatenated homotopy, which is
chain homotopic to the identity continuation.  After padding each one-sided
map by a structure map to the common shift $\delta(H,K)$, the composites are
the $2\delta(H,K)$ structure maps.  This is an $A$-linear
$\delta(H,K)$-interleaving.
\end{proof}

\begin{remark}[Foundational scope]
The proof above records the complete energy and compactification arguments
used later.  The standard transversality, gluing, and coherent-orientation
package for the zero- and one-dimensional moduli spaces is not redeveloped
here; it is the same package used to define Hamiltonian Floer homology, the
PSS map, and Schwarz's cap-action.
\end{remark}

\subsection{Intrinsic dependence}
A normalized Hamiltonian path defines an element of the universal cover
$\widetilde{\Ham}(M,\omega)$.  The intrinsic filtered Floer groups and their
cap-actions depend only on that element.  In the symplectically aspherical
case, Schwarz's action monodromy vanishes and the relevant Seidel action is
trivial, so the normalized filtered theory descends to the time-one map
\cite[Sections~3--4]{Schwarz}.  None of the algebraic results
below requires this descent; it is used only when formulating Hofer lower
bounds on $\Ham(M,\omega)$ rather than on its universal cover.

\section{Persistent Floer cap modules and Hofer distance}
\label{sec:floer-PCL}

Let $A=H^*(M;\K)$ and let $L=\cl_\K(M)$.  For a normalized
non-degenerate Hamiltonian $H$, set
\[
 V_a^H:=HF_*^{(-\infty,a]}(H).
\]

\begin{definition}
For $a\le b$, we define the \textit{persistent cap-length} of $H$ as
\[
 \PCL_H([a,b]):=\mcl_A\bigl(\im(i_a^b:V_a^H\to V_b^H)\bigr).
\]
For every homogeneous ideal $J\subseteq A^{>0}$, depth $\ell\ge1$, and degree
$q$, define
\[
 \Bcap_{J,\ell,q}(H):=
 \barc\bigl(a\mapsto(\CapLayer_J^\ell(V_a^H))_q\bigr).
\]
\end{definition}

\begin{theorem}
\label{thm:PCL-basic}
The function $\PCL_H$ takes values in $\{0,\ldots,L\}$ and satisfies
\[
 c\le a\le b\le d\quad\Longrightarrow\quad
 \PCL_H([a,b])\ge\PCL_H([c,d]).
\]
It vanishes below the action spectrum and equals $L$ for $[a,a]$ with $a>\sup \mathrm{Spec}(H)$.  Moreover,
\[
 \PCL_H([a,b])=
 \max\left\{\ell\ge1\ \middle|\
 \begin{array}{c}
 \text{some total-layer bar in }\bigsqcup_q\Bcap_{\ell,q}(H)\\
 \text{contains }[a,b]
 \end{array}\right\}.
\]
\end{theorem}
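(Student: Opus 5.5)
The proof will treat $\mathcal V^H=(V_a^H,i_a^b)$ as an $\R$-indexed persistence module in $\mathrm{GrMod}_A$ and derive most assertions from the algebra of \cref{sec:algebra}. By \cref{thm:filtered-cap}(1), each $V_a^H$ is a graded right $A$-module and each $i_a^b$ is $A$-linear, so $\mathcal V^H$ is such a persistence module. It is constructible with $S=\Spec(H)$, since $CF_*^{(-\infty,a]}(H)$ only changes when $a$ crosses an action value. It is pointwise finite-dimensional because $H$ is non-degenerate, so there are finitely many orbits.

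\textbf{Anti-monotonicity and the bar formula.} Interval anti-monotonicity is then exactly \cref{prop:anti-recover}. The bar description is Step~1 of \cref{thm:barcode}. That step needs each $\Psi_{\mideal}^{\ell,q}(\mathcal V^H)$ to be constructible and pointwise finite-dimensional. Both hold because these are subspaces of $(V_a^H)_q$, and the structure maps are restrictions of $i_a^b$. Between consecutive critical values, $i_a^b$ is an $A$-module isomorphism, and by \cref{lem:layer-functor} it maps $\CapLayer^\ell(V_a)$ onto $\CapLayer^\ell(V_b)$.

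\textbf{The range $\{0,\dots,L\}$.} We have $\PCL_H\le\cl(A)=L$, since any nonzero length-$\ell$ cap word $x\frownH u_1\cdots u_\ell=x\frownH(u_1\smilecup\cdots\smilecup u_\ell)$ forces $u_1\smilecup\cdots\smilecup u_\ell\ne0$. Below the spectrum, $V_a^H=0$, so $\PCL_H=0$.

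\textbf{The value $L$ above the spectrum.} For $a>\sup\Spec(H)$, the inclusion $CF_*^{(-\infty,a]}(H)=CF_*(H)$ gives $V_a^H=HF_*(H)$, and $i_a^a=\id$. So $\PCL_H([a,a])=\mcl_A(HF_*(H))$. By the PSS module compatibility \eqref{eq:PSS-module}, $\Phi_H$ is a bijective $A$-module map from $A_A$ (with its degree shift) onto $HF_*(H)$. By \cref{lem:epi-mono}, used in both directions, $\mcl_A(HF_*(H))=\mcl_A(A_A)=\cl(A)=L$. This relies on $\Phi_H$ being an isomorphism of right $A$-modules, not merely of vector spaces, which is what \eqref{eq:PSS-module} supplies.

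\textbf{Expected obstacle.} I expect the only delicate point to be the bookkeeping in the constructibility claim: checking that the layers inherit isomorphic structure maps away from $\Spec(H)$, and that the endpoint conventions $(-\infty,a]$ match the statement at $a\in\Spec(H)$. These endpoint issues affect only bar decorations, not the formula, because the formula concerns closed intervals $[a,b]$ and works directly with images of $i_a^b$.
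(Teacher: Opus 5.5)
Your proposal is correct and follows essentially the same route as the paper's proof. Anti-monotonicity and the bar formula come from \cref{prop:anti-recover} and Step~1 of \cref{thm:barcode}; vanishing holds because $V_a^H=0$ below the spectrum; and the value $L$ holds because full Floer homology is the regular $A$-module under PSS. You additionally verify the constructibility and pointwise finite-dimensionality hypotheses, and you derive the bound $\PCL_H\le L$ from the module law, both of which the paper leaves implicit.
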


\begin{proof}
Apply \cref{prop:anti-recover,thm:barcode} to the filtered Floer
persistence module.  Below the smallest action value the filtered complex is
zero.  Above the largest action value the module is full Floer homology,
which is the regular $A$-module under PSS; its cap-length is $L$.
\end{proof}

For normalized Hamiltonians $H,K$, recall
\begin{align*}
 \Delta^+(H,K)&=\int_0^1\max_M(K_t-H_t)\,dt,\\
 \delta(H,K)&=\max\{\Delta^+(H,K),\Delta^+(K,H)\}.
\end{align*}

\begin{theorem}[Scalar and cap-barcode stability]
\label{thm:universal-stability}
Let \(H\) and \(K\) be normalized Hamiltonians.  Then
\begin{equation}
\label{eq:PCL-Hofer-stability}
\distE(\PCL_H,\PCL_K)
\leq
\delta(H,K)
\leq
\normHofer{H-K}.
\end{equation}

Moreover, for every homogeneous ideal
\(J\subseteq A^{>0}\), every depth \(\ell\geq1\), and every
degree \(q\),
\[
d_{\mathrm B}
\bigl(
\Bcap_{J,\ell,q}(H),
\Bcap_{J,\ell,q}(K)
\bigr)
\leq
\delta(H,K).
\]
Consequently,
\begin{equation}
\label{eq:Dcap}
\Dcap(H,K)
:=
\sup_{J,\ell,q}
d_{\mathrm B}
\bigl(
\Bcap_{J,\ell,q}(H),
\Bcap_{J,\ell,q}(K)
\bigr)
\leq
\delta(H,K)
\leq
\normHofer{H-K}.
\end{equation}

After passage to the intrinsic filtered Floer theory, let
\(\PCL_\phi\) denote the persistent cap-length function associated
with \(\phi\in\Ham(M,\omega)\).  Then
\[
\distE(\PCL_\phi,\PCL_\psi)
\leq
\distH(\phi,\psi)
\]
and
\[
\Dcap(\phi,\psi)
\leq
\distH(\phi,\psi)
\]
for all \(\phi,\psi\in\Ham(M,\omega)\).
\end{theorem}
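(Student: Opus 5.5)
The plan is to feed the $A$-linear interleaving of \cref{thm:continuation} into the abstract stability results of \cref{sec:algebra}, and then to pass to the infimum over Hamiltonians generating $\phi$ and $\psi$.

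\emph{Step 1: non-degenerate $H,K$.} By \cref{thm:continuation}, the persistence modules $(V_a^H,i_a^b)$ and $(V_a^K,i_a^b)$ are $A$-linearly $\delta(H,K)$-interleaved. \cref{thm:abstract-stability} then gives $\distE(\PCL_H,\PCL_K)\le\delta(H,K)$. For the barcodes we need each fixed layer $a\mapsto(\CapLayer_J^\ell(V_a^H))_q$ to be constructible and pointwise finite-dimensional. This holds because $CF_*(H)$ is finite-dimensional and $V_a^H$ is constant, as a graded $A$-module, on each component of $\R\setminus\Spec(H)$. Since the structure maps are $A$-linear isomorphisms there, they restrict to isomorphisms of cap layers. \cref{thm:barcode} then gives the bottleneck bound for every $(J,\ell,q)$, and taking the supremum gives the bound on $\Dcap$. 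The remaining inequality is elementary. Normalization gives $\int_M(K_t-H_t)\omega=0$, so $\max_M(K_t-H_t)\ge0\ge\min_M(K_t-H_t)$. Hence each of $\Delta^+(H,K)$ and $\Delta^+(K,H)$ is at most $\int_0^1\operatorname{osc}_M(H_t-K_t)\,dt=\normHofer{H-K}$.

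\emph{Step 2: degenerate Hamiltonians.} For degenerate $H$, define the invariants as limits along non-degenerate $C^2$-small perturbations $H_n\to H$. By Step 1, $\Dcap(H_n,H_m)\le\normHofer{H_n-H_m}\to0$, and the same holds for $\distE(\PCL_{H_n},\PCL_{H_m})$. So these families are Cauchy for the bottleneck and erosion pseudometrics, and the stability inequalities pass to the limit by the triangle inequality. If the paper intends the statement only for non-degenerate $H,K$, this step is vacuous. Either way, I expect this step to be the main technical obstacle: constructibility of the limiting layer modules is not automatic, so the barcode of a degenerate $H$ has to be understood as a limit in the bottleneck completion, or the statement restricted accordingly.

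\emph{Step 3: Hamiltonian diffeomorphisms.} Use the intrinsic dependence discussed in the subsection on intrinsic dependence: in the aspherical case the normalized filtered $A$-module depends only on the time-one map. Given $\phi=\phi_H^1$ and $\psi=\phi_K^1$ with normalized $H,K$, the invariants of $\phi$ and $\psi$ are those of $H$ and $K$. Step 1 bounds $\distE(\PCL_\phi,\PCL_\psi)$ and $\Dcap(\phi,\psi)$ by $\normHofer{H-K}$. The flow of $K\#\bar H$ generates $\psi\phi^{-1}$, and $\normHofer{K\#\bar H}=\normHofer{H-K\circ\ldots}$ is not literally $\normHofer{H-K}$. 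So I would instead use the standard reduction: by bi-invariance of $\distH$ and the fact that $\normHofer{H-K}$ and $\normHofer{\bar H\# K}$ agree after reparametrizing by $\phi_H^t$, the infimum of $\normHofer{H-K}$ over all normalized $H,K$ generating $\phi,\psi$ equals $\distH(\phi,\psi)$. Taking this infimum gives both Hofer bounds. The one check needed is that the intrinsic invariants are genuinely independent of the chosen path in $\Ham(M,\omega)$; this is exactly Schwarz's vanishing of action monodromy, which we cite.
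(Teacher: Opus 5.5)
Your proposal is correct and follows essentially the same route as the paper: the $A$-linear continuation interleaving of \cref{thm:continuation} is fed into \cref{thm:abstract-stability} and \cref{thm:barcode}, and an elementary normalization argument gives $\delta(H,K)\le\normHofer{H-K}$ (you bound each $\Delta^+$ separately by the oscillation, while the paper uses $\Delta^+(H,K)+\Delta^+(K,H)=\normHofer{H-K}$), with degenerate Hamiltonians handled by perturbation. In Step~3 the inequality you actually need is $\inf_{K}\normHofer{H-K}\le\distH(\phi,\psi)$; it comes from writing any normalized $G$ generating $\phi^{-1}\psi$ as $\bar H\# K$ with $K_t=H_t+G_t\circ(\phi_H^t)^{-1}$, which is exactly the paper's explicit construction and gives $\normHofer{K-H}=\normHofer{G}$.
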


\begin{proof}
Assume first that $H$ and $K$ are non-degenerate. 
Let
\[
\mathcal V^H
=
\bigl(V_a^H,i_{a,H}^b\bigr)_{a\leq b},
\qquad
\mathcal V^K
=
\bigl(V_a^K,i_{a,K}^b\bigr)_{a\leq b}
\]
denote the filtered Floer persistence modules of \(H\) and \(K\),
where we have temporarily added the subscripts \(H\) and \(K\) to
the structure maps in order to distinguish them.

Set
\[
\varepsilon:=\delta(H,K).
\]
By \cref{thm:continuation}, the continuation maps, padded by
structure maps from their one-sided filtration shifts to the common
shift \(\varepsilon\), define an \(A\)-linear
\(\varepsilon\)-interleaving
\[
F\colon
\mathcal V^H
\longrightarrow
T_\varepsilon\mathcal V^K,
\qquad
G\colon
\mathcal V^K
\longrightarrow
T_\varepsilon\mathcal V^H.
\]
Consequently, by \cref{thm:abstract-stability} we obtain
\[
\distE(\PCL_H,\PCL_K)
\leq
\varepsilon
=
\delta(H,K).
\]

We next compare \(\delta(H,K)\) with the Hofer norm.  Since \(H\)
and \(K\) are normalized, the function
\[
H_t-K_t
\]
has zero mean for every \(t\).  Therefore
\[
\max_M(H_t-K_t)\geq0,
\qquad
\max_M(K_t-H_t)\geq0,
\]
and hence both
\(\Delta^+(H,K)\) and \(\Delta^+(K,H)\) are nonnegative.  Moreover,
\begin{align*}
\Delta^+(H,K)+\Delta^+(K,H)
&=
\int_0^1
\left(
\max_M(K_t-H_t)+\max_M(H_t-K_t)
\right)\,dt
\\
&=
\int_0^1
\left(
\max_M(H_t-K_t)-\min_M(H_t-K_t)
\right)\,dt
\\
&=
\normHofer{H-K}.
\end{align*}
It follows that
\[
\delta(H,K)
=
\max
\left\{
\Delta^+(H,K),
\Delta^+(K,H)
\right\}
\leq
\normHofer{H-K}.
\]
This proves \eqref{eq:PCL-Hofer-stability}.

For the barcode estimate, the \(A\)-linearity of \(F_a\) and
\(G_a\) implies that, for every homogeneous ideal \(J\),
\[
F_a
\bigl(
\CapLayer_J^\ell(V_a^H)
\bigr)
\subseteq
\CapLayer_J^\ell(V_{a+\varepsilon}^K)
\]
and
\[
G_a
\bigl(
\CapLayer_J^\ell(V_a^K)
\bigr)
\subseteq
\CapLayer_J^\ell(V_{a+\varepsilon}^H).
\]
Since the continuation maps preserve the grading, they restrict
further to the degree-\(q\) components.  Thus
\[
\Psi_J^{\ell,q}(\mathcal V^H)
\quad\text{and}\quad
\Psi_J^{\ell,q}(\mathcal V^K)
\]
are ordinarily \(\varepsilon\)-interleaved.  By
\cref{thm:barcode},
\[
d_{\mathrm B}
\bigl(
\Bcap_{J,\ell,q}(H),
\Bcap_{J,\ell,q}(K)
\bigr)
\leq
\varepsilon
=
\delta(H,K).
\]
Taking the supremum over all triples \((J,\ell,q)\) proves
\eqref{eq:Dcap}.

It remains to prove the intrinsic Hofer estimates.  Let
\[
\phi,\psi\in\Ham(M,\omega).
\]
Choose a normalized Hamiltonian \(H\) generating \(\phi\), and let
\(G\) be any normalized Hamiltonian generating
\[
\phi^{-1}\psi.
\]
Define
\[
K_t(x)
:=
H_t(x)
+
G_t\bigl((\phi_H^t)^{-1}(x)\bigr).
\]
Then
\[
\phi_K^t
=
\phi_H^t\circ\phi_G^t,
\]
and consequently
\[
\phi_K^1
=
\phi\circ(\phi^{-1}\psi)
=
\psi.
\]
The Hamiltonian \(K\) is normalized, since symplectic
diffeomorphisms preserve the symplectic volume.  Furthermore,
\[
K_t-H_t
=
G_t\circ(\phi_H^t)^{-1},
\]
so composition with \((\phi_H^t)^{-1}\) preserves the maximum and
minimum, and hence
\[
\normHofer{K-H}
=
\normHofer{G}.
\]

By the intrinsic filtered Floer identification,
\[
\PCL_H=\PCL_\phi,
\qquad
\PCL_K=\PCL_\psi,
\]
and similarly the cap-decorated barcodes of \(H\) and \(K\)
represent the intrinsic barcodes of \(\phi\) and \(\psi\).
Therefore the estimates already proved give
\[
\distE(\PCL_\phi,\PCL_\psi)
\leq
\normHofer{G}
\]
and
\[
\Dcap(\phi,\psi)
\leq
\normHofer{G}.
\]
Taking the infimum over all normalized Hamiltonians \(G\)
generating \(\phi^{-1}\psi\) yields
\[
\distE(\PCL_\phi,\PCL_\psi)
\leq
\distH(\phi,\psi)
\]
and
\[
\Dcap(\phi,\psi)
\leq
\distH(\phi,\psi).
\]

For degenerate Hamiltonians, the same conclusions follow by the
intrinsic continuation-limit construction, or equivalently by
arbitrarily small non-degenerate perturbations and the estimates
above.
\end{proof}


\subsection{Symplectic push-forward and cap naturality}
\label{subsec:push-forward}

Let $\eta\in\Symp(M,\omega)$ and let $H$ be a normalized
non-degenerate Hamiltonian.  Define the conjugated Hamiltonian by
\begin{equation}
 H^\eta_t:=H_t\circ\eta^{-1}.
\label{eq:conjugated-Hamiltonian}
\end{equation}
It is again normalized and satisfies
\[
 \phi_{H^\eta}^t=\eta\phi_H^t\eta^{-1}.
\]
Push-forward sends a periodic orbit $x$ of $H$ to the periodic orbit
\[
 P(\eta)x:=\eta\circ x
\]
of $H^\eta$.  Since $(M,\omega)$ is symplectically aspherical, the action
of a contractible periodic orbit is independent of the choice of capping;
accordingly, no capped-orbit notation is needed here.  Since
$\eta^*\omega=\omega$ and $H^\eta_t(\eta(x))=H_t(x)$, one has
\[
 \cA_{H^\eta}(P(\eta)x)=\cA_H(x).
\]
Transporting the almost-complex data by $\eta$ identifies the corresponding
Floer trajectories and preserves the Conley--Zehnder grading.  Thus
push-forward induces filtration- and degree-preserving isomorphisms
\begin{equation}
 P_a(\eta):HF_*^{(-\infty,a]}(H)
 \xrightarrow{\ \cong\ }
 HF_*^{(-\infty,a]}(H^\eta),
\label{eq:push-forward-filtered}
\end{equation}
which commute with the persistence structure maps.  We use the standard
push-forward construction from
\cite[Definition~2.11]{PolterovichShelukhin}; the compatibility with
marked-point operators is also part of the operator-naturality framework in
\cite[Section~3.1]{PolterovichShelukhinStojisavljevic}.

\begin{proposition}[Push-forward naturality of cap layers]
\label{prop:push-forward-naturality}
For every $u\in H^*(M;\K)$ and every $x\in V_a^H$,
\begin{equation}
 P_a(\eta)(x\frown u)
 =P_a(\eta)(x)\frown(\eta^{-1})^*u.
\label{eq:push-forward-cap}
\end{equation}
Consequently, for every homogeneous ideal
$I\subseteq H^{>0}(M;\K)$ and every $\ell\geq0$,
\begin{equation}
 P_a(\eta)\bigl(\CapLayer_I^\ell(V_a^H)\bigr)
 =
 \CapLayer_{(\eta^{-1})^*I}^\ell(V_a^{H^\eta}).
\label{eq:push-forward-layer}
\end{equation}
The equality is compatible with the persistence structure maps and with the
gradings.  Hence, for every degree $q$,
\begin{equation}
 \Bcap_{(\eta^{-1})^*I,\ell,q}(H^\eta)
 =\Bcap_{I,\ell,q}(H).
\label{eq:push-forward-barcode}
\end{equation}
Equivalently, writing $J=(\eta^{-1})^*I$,
\begin{equation}
 \Bcap_{J,\ell,q}(H^\eta)
 =\Bcap_{\eta^*J,\ell,q}(H).
\label{eq:push-forward-barcode-target}
\end{equation}
The same statements hold for the intrinsic filtered Floer modules of the
time-one maps $\phi$ and $\eta\phi\eta^{-1}$.
\end{proposition}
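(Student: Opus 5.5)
The plan is to prove the chain-level identity \eqref{eq:push-forward-cap} by transport of structure, and then derive the layer, barcode and intrinsic statements formally from it.

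\emph{Step 1: chain-level naturality.} Let $(f,g)$ be a Morse--Smale pair used for Schwarz's cap-action on $CF_*(H)$ with Floer data $J_t$. For $H^\eta$ we choose the pushed-forward data: $\eta_*J_t$, the Morse function $f\circ\eta^{-1}$, and the metric $\eta_*g$. A cap configuration $(v,\gamma)$ for $H$ consists of a Floer cylinder $v$ from $y$ to $z$ and a negative Morse half-trajectory $\gamma$ ending at a critical point $p$, with $v(0,0)=\gamma(0)$. Applying $\eta$ gives the pair $(\eta\circ v,\eta\circ\gamma)$. This is a Floer cylinder for $H^\eta$ from $\eta y$ to $\eta z$, together with a half-trajectory of $f\circ\eta^{-1}$ ending at $\eta(p)$, and the incidence condition is preserved. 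The correspondence is a bijection of moduli spaces. It preserves regularity, since transversality is transported. It also preserves coherent orientations, because $\eta$ is a symplectomorphism and the orientation data are transported. The Morse cocycle $\sum_p c_p\,p$ representing $u$ goes to $\sum_p c_p\,\eta(p)$. In cohomology this cocycle represents $(\eta^{-1})^*u$, since $\eta$ carries the Morse complex of $f$ isomorphically onto that of $f\circ\eta^{-1}$ and this isomorphism realizes $(\eta^{-1})^*$. Hence $P(\eta)\circ\frown_H u=\frown_{H^\eta}(\eta^{-1})^*u\circ P(\eta)$ on chains.

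\emph{Step 2: independence of auxiliary data.} Step 1 used special auxiliary data for $H^\eta$, but the cap-action on $V_a^{H^\eta}$ is defined with arbitrary generic data. I would invoke the standard fact, from the marked-point constructions behind \cref{thm:filtered-cap}, that changing the Morse pair and the almost-complex structure for a fixed Hamiltonian induces filtration-preserving chain homotopies. These commute with the cap operation up to homotopy. Since the Hamiltonian is unchanged, $\Delta^+=0$, so the shift is zero and the identification holds already on each $V_a$. The filtration compatibility of $P_a(\eta)$ itself follows from the action identity $\cA_{H^\eta}(\eta x)=\cA_H(x)$. I expect this step, making the independence of data work at the filtered level, to be the main technical obstacle. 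I would handle it by citing the operator-naturality framework of Polterovich--Shelukhin--Stojisavljevi\'c for marked-point operators.

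\emph{Step 3: layers and barcodes.} Iterating \eqref{eq:push-forward-cap} sends a generator $((x\frown u_1)\cdots)\frown u_\ell$ with $u_i\in I$ to $((P_a(\eta)x\frown(\eta^{-1})^*u_1)\cdots)\frown(\eta^{-1})^*u_\ell$. Because $P_a(\eta)$ is surjective and $(\eta^{-1})^*$ is a graded ring automorphism, $(\eta^{-1})^*I$ is a homogeneous ideal, and every generator of its layer arises this way. This gives \eqref{eq:push-forward-layer}. Since $P(\eta)$ commutes with the structure maps and preserves degree, its restrictions form an isomorphism of the persistence modules $\Psi_I^{\ell,q}(\mathcal V^H)\cong\Psi_{(\eta^{-1})^*I}^{\ell,q}(\mathcal V^{H^\eta})$. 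Uniqueness of the interval decomposition then yields \eqref{eq:push-forward-barcode}, and substituting $I=\eta^*J$ gives \eqref{eq:push-forward-barcode-target}. For the intrinsic statement, note that $H^\eta$ generates $\eta\phi\eta^{-1}$, and the descent to time-one maps recalled in the subsection on intrinsic dependence applies to both sides.
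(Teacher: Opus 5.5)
Your proposal is correct and follows essentially the same route as the paper: you transport the marked-point moduli spaces by $\eta$, identify the transported constraint with a representative of $(\eta^{-1})^*u$, iterate to obtain the layer equality, deduce equal barcodes from the resulting isomorphism of fixed-depth, fixed-degree persistence modules, and get the intrinsic version via descent. The differences are cosmetic. The paper represents $\PD(u)$ by a pseudocycle $Z_u$ and uses $\eta_*\PD(u)=\PD((\eta^{-1})^*u)$ where you use a Morse cocycle, and it obtains the reverse inclusion by applying the argument to $\eta^{-1}$ rather than by surjectivity of $P_a(\eta)$; your Step 2 on independence of auxiliary data makes explicit a point the paper leaves implicit.
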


\begin{proof}
Represent $\PD(u)$ by a cycle or pseudocycle
$Z_u\to M$.  In the marked-point definition of the cap-action, the relevant
Floer solutions $v:\R\times S^1\to M$ satisfy an incidence condition of the form
\[
 v(0,0)\in Z_u.
\]
Push-forward sends such a solution to $\eta\circ v$, and the incidence
condition becomes
\[
 (\eta\circ v)(0,0)\in\eta(Z_u).
\]
Naturality of Poincar\'e duality under the orientation-preserving
diffeomorphism $\eta$ gives
\[
 \eta_*\PD(u)=\PD\bigl((\eta^{-1})^*u\bigr).
\]
Thus the push-forward correspondence identifies the marked-point moduli
spaces defining the two sides of \eqref{eq:push-forward-cap}.  With the
transported coherent orientations, their algebraic counts agree, which proves
\eqref{eq:push-forward-cap} on Floer homology.

For $\ell\geq1$, iterating \eqref{eq:push-forward-cap} gives
\begin{align*}
&P_a(\eta)
 \bigl(((x\frown u_1)\frown u_2)\cdots\frown u_\ell\bigr)
\\
&\qquad=
 \bigl(((P_a(\eta)x\frown(\eta^{-1})^*u_1)
 \frown(\eta^{-1})^*u_2)\cdots\frown(\eta^{-1})^*u_\ell\bigr).
\end{align*}
For $\ell=0$, the same conclusion follows directly from
\eqref{eq:push-forward-filtered}.  This proves one inclusion in
\eqref{eq:push-forward-layer}; applying the same argument to $\eta^{-1}$
proves the reverse inclusion.  Since
\eqref{eq:push-forward-filtered} is filtration preserving, degree preserving,
and natural in $a$, it gives an isomorphism of the fixed-depth,
fixed-degree persistence modules.  Their barcodes are therefore equal, which
proves \eqref{eq:push-forward-barcode} and
\eqref{eq:push-forward-barcode-target}.  The intrinsic statement follows
from the descent of the normalized filtered theory discussed in
\cref{sec:floer}.
\end{proof}

\begin{remark}[Semilinearity in noncontractible free homotopy classes]
\label{rem:push-forward-semilinear}
Equation \eqref{eq:push-forward-cap} says that push-forward is generally
semilinear with respect to the cohomology automorphism
$(\eta^{-1})^*$; it is not $A$-linear unless this automorphism acts trivially
on the relevant classes.  The same proof applies to filtered Floer homology
in an atoroidal free homotopy class $\alpha$.  In that setting push-forward
has the form
\[
 P_a(\eta):HF_*^{(-\infty,a]}(H)_\alpha
 \longrightarrow
 HF_*^{(-\infty,a]}(H^\eta)_{\eta_*\alpha}.
\]
In particular, when $\eta_*\alpha=\alpha$, it defines an automorphism of the
filtered Floer homology in the free homotopy class $\alpha$.
\end{remark}

\begin{proposition}[Hamiltonian-conjugacy invariance]
\label{prop:conjugacy}
For $\theta\in\Ham(M,\omega)$,
\[
 \Bcap_{J,\ell,q}(\theta\phi\theta^{-1})
 =\Bcap_{J,\ell,q}(\phi)
\]
for every homogeneous ideal $J$.  Hence
\[
 \Dcap(\theta\phi\theta^{-1},\psi)=\Dcap(\phi,\psi).
\]
\end{proposition}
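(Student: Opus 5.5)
The plan is to derive the statement directly from \cref{prop:push-forward-naturality} applied with the Hamiltonian conjugator $\eta=\theta\in\Ham(M,\omega)$. Equation \eqref{eq:push-forward-barcode-target} gives, for every homogeneous ideal $J$, depth $\ell$ and degree $q$,
\[
 \Bcap_{J,\ell,q}(\theta\phi\theta^{-1})=\Bcap_{\theta^*J,\ell,q}(\phi),
\]
at the level of the intrinsic filtered Floer modules. Fix a normalized Hamiltonian $H$ generating $\phi$ and use $H^\theta=H\circ\theta^{-1}$, which generates $\theta\phi\theta^{-1}$. It therefore suffices to show that $\theta^*J=J$ for every ideal $J$, that is, that $\theta^*$ acts as the identity on $H^*(M;\K)$.

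The key step is the following observation. Any $\theta\in\Ham(M,\omega)$ is the time-one map of a Hamiltonian isotopy $\phi_G^t$ starting at $\id$. Homotopic maps induce the same map on cohomology, so $\theta^*=(\phi_G^0)^*=\id$ on $A$. Hence $\theta^*J=J$ for every homogeneous $J\subseteq A^{>0}$, and the displayed identity becomes the first claim.

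The second claim then follows formally. By definition $\Dcap(\phi,\psi)$ is a supremum of bottleneck distances between the barcodes $\Bcap_{J,\ell,q}(\phi)$ and $\Bcap_{J,\ell,q}(\psi)$. Replacing $\phi$ by $\theta\phi\theta^{-1}$ does not change any of the barcodes involved, so each term in the supremum is unchanged and the two suprema coincide.

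The only step needing care is the passage from Hamiltonians to time-one maps. The barcodes of $H^\theta$ depend a priori on the path $t\mapsto\theta\phi_H^t\theta^{-1}$, not only on its endpoint. This is handled by the descent statement of \cref{sec:floer}: in the aspherical case the normalized filtered theory, together with its cap-action, depends only on the time-one map. So the intrinsic barcodes of $\theta\phi\theta^{-1}$ may be computed from $H^\theta$. If $H$ is degenerate, I would first pass to a small non-degenerate perturbation and then use the Hofer stability of \cref{thm:universal-stability} to take the limit. I do not expect any genuine obstacle beyond this bookkeeping.
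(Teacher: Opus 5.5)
Your proposal is correct and follows essentially the same route as the paper: apply the push-forward naturality with $\eta=\theta$, note that $\theta$ is isotopic to the identity so $\theta^*=\id$ on $H^*(M;\K)$ and every ideal label is preserved, and then read off the equality for $\Dcap$ directly from its definition. Your additional remarks on descent to time-one maps and on degenerate Hamiltonians are harmless bookkeeping that the paper leaves implicit in its intrinsic formulation.
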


\begin{proof}
Apply \cref{prop:push-forward-naturality} with $\eta=\theta$.  Since a
Hamiltonian diffeomorphism is isotopic to the identity,
$\theta^*=\id$ on $H^*(M;\K)$.  Thus
\eqref{eq:push-forward-barcode-target} preserves every ideal label and gives
\[
 \Bcap_{J,\ell,q}(\theta\phi\theta^{-1})
 =\Bcap_{J,\ell,q}(\phi).
\]
The asserted equality for $\Dcap$ follows directly from its definition.
\end{proof}

\subsection{Hamiltonian-conjugacy orbits and symplectic mapping-class displacement}
\label{sec:mapping-class}

The universal cap-profile pseudodistance has a geometric consequence that
uses the ideal labels, not only the underlying collection of vector spaces.
Throughout this section, \emph{Hamiltonian conjugacy} means conjugation by an
element of $\Ham(M,\omega)$, whereas \emph{symplectic conjugation} means
conjugation by an element of $\Symp(M,\omega)$.  We write
\[
 \Csymp{\eta}{\phi}:=\eta\phi\eta^{-1},
 \qquad \eta\in\Symp(M,\omega),
\]
and, for $\phi\in\Ham(M,\omega)$,
\[
 \OrbHam(\phi):=\{\theta\phi\theta^{-1}\mid
                    \theta\in\Ham(M,\omega)\},
 \qquad [\phi]_{\mathrm{Ham}}:=\OrbHam(\phi).
\]
The Hofer pseudodistance between two Hamiltonian-conjugacy orbits is
\begin{equation}
\label{eq:dconj}
 \dHamConj([\phi]_{\mathrm{Ham}},[\psi]_{\mathrm{Ham}})
 :=\inf_{\theta_0,\theta_1\in\Ham(M,\omega)}
 \distH(\theta_0\phi\theta_0^{-1},
        \theta_1\psi\theta_1^{-1}).
\end{equation}
By bi-invariance of Hofer's metric, one may equivalently fix either one of
the two representatives and conjugate only the other.  Thus the orbit
relation in \eqref{eq:dconj} always uses Hamiltonian conjugators; a general
symplectic diffeomorphism enters only as an external map between such orbits.

\begin{theorem}[Symplectic naturality of decorated barcodes]
\label{thm:symp-naturality}
Let $\eta\in\Symp(M,\omega)$.  For every homogeneous ideal
$J\subseteq H^{>0}(M;\K)$, every $\ell\geq1$, and every degree $q$,
\begin{equation}
\label{eq:symp-bar-naturality}
 \Bcap_{J,\ell,q}(\Csymp{\eta}{\phi})
 =\Bcap_{\eta^*J,\ell,q}(\phi).
\end{equation}
Consequently,
\begin{equation}
\label{eq:mapping-profile-identity}
 \Dcap(\phi,\Csymp{\eta}{\phi})
 =\sup_{J,\ell,q}
 d_{\mathrm B}\bigl(\Bcap_{J,\ell,q}(\phi),
                     \Bcap_{\eta^*J,\ell,q}(\phi)\bigr).
\end{equation}
\end{theorem}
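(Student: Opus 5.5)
The proof reduces to \cref{prop:push-forward-naturality} once the right Hamiltonian representative of $\Csymp{\eta}{\phi}$ is fixed. Choose a normalized Hamiltonian $H$ with $\phi_H^1=\phi$, and take the conjugated Hamiltonian $H^\eta_t=H_t\circ\eta^{-1}$ from \eqref{eq:conjugated-Hamiltonian}. It is normalized, and its flow is $\eta\phi_H^t\eta^{-1}$, so $H^\eta$ generates $\Csymp{\eta}{\phi}$.

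Apply \eqref{eq:push-forward-barcode-target} with the given homogeneous ideal $J$. This gives $\Bcap_{J,\ell,q}(H^\eta)=\Bcap_{\eta^*J,\ell,q}(H)$, provided $\eta^*J$ is again a homogeneous ideal contained in $H^{>0}(M;\K)$. That holds because $\eta^*$ is a degree-preserving ring automorphism of $A$. Moreover, the pull-back of the push-forward label returns the original one: with $I=\eta^*J$ one has $(\eta^{-1})^*I=J$, so the ideal labels match as indexed in the statement. This identity concerns the filtered Floer modules of specific Hamiltonians.

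To pass to the time-one maps, I invoke the intrinsic dependence from \cref{sec:floer}. In the aspherical case the normalized filtered theory, together with its cap-action, descends to $\Ham(M,\omega)$, so the decorated barcodes of $H$ and $H^\eta$ depend only on $\phi$ and $\Csymp{\eta}{\phi}$. The last sentence of \cref{prop:push-forward-naturality} already asserts this intrinsic version, and it yields \eqref{eq:symp-bar-naturality}. If $\phi$ is degenerate, I argue as at the end of the proof of \cref{thm:universal-stability}. Take small non-degenerate perturbations $H'$ of $H$; then $(H')^\eta$ is a perturbation of $H^\eta$ of the same Hofer size, because composition with $\eta^{-1}$ preserves oscillation. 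The barcode stability of \cref{thm:universal-stability} then carries the equality to the limit, since both sides vary within bottleneck distance $\le\normHofer{H-H'}$.

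Finally, \eqref{eq:mapping-profile-identity} follows by substituting \eqref{eq:symp-bar-naturality} into the definition \eqref{eq:Dcap} of $\Dcap(\phi,\Csymp{\eta}{\phi})$, term by term in $(J,\ell,q)$, and taking the supremum. I expect the main obstacle to be the bookkeeping at the intrinsic and degenerate level: checking that the equality of barcodes survives the limiting procedure, where strictly speaking one obtains bottleneck distance zero rather than literal equality of multisets. I would handle this by noting that constructible barcodes at bottleneck distance zero coincide up to endpoint decorations, which is the paper's stated convention.
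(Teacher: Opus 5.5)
Your proposal is correct and is essentially the paper's argument: the paper's proof observes that \eqref{eq:symp-bar-naturality} is the intrinsic form of \eqref{eq:push-forward-barcode-target} in \cref{prop:push-forward-naturality}, with $I=\eta^*J$ so that $(\eta^{-1})^*I=J$, and then substitutes into the definition of $\Dcap$. Your additional perturbation argument for degenerate $\phi$, which uses that $H\mapsto H^\eta$ preserves non-degeneracy and Hofer distances, elaborates what the paper leaves to its intrinsic continuation-limit construction; as you note, in that case it gives equality only up to endpoint decorations.
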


\begin{proof}
Equation \eqref{eq:symp-bar-naturality} is precisely the intrinsic form of
\eqref{eq:push-forward-barcode-target} in
\cref{prop:push-forward-naturality}.  Substitution into the definition of
$\Dcap$ gives \eqref{eq:mapping-profile-identity}.
\end{proof}

\begin{theorem}[Cap-profile lower bound on the Hamiltonian-conjugacy quotient]
\label{thm:conj-quotient}
For all $\phi,\psi\in\Ham(M,\omega)$,
\begin{equation}
\label{eq:conj-profile-lower}
 \dHamConj([\phi]_{\mathrm{Ham}},[\psi]_{\mathrm{Ham}})
 \geq\Dcap(\phi,\psi).
\end{equation}
In particular, for $\eta\in\Symp(M,\omega)$ define
\begin{equation}
\label{eq:DeltaCap}
 \DeltaCap_\eta(\phi)
 :=\sup_{J,\ell,q}
 d_{\mathrm B}\bigl(\Bcap_{J,\ell,q}(\phi),
                     \Bcap_{\eta^*J,\ell,q}(\phi)\bigr).
\end{equation}
Then
\begin{equation}
\label{eq:mapping-class-lower}
 \dHamConj\bigl([\phi]_{\mathrm{Ham}},
                 [\Csymp{\eta}{\phi}]_{\mathrm{Ham}}\bigr)
 \geq \DeltaCap_\eta(\phi).
\end{equation}
\end{theorem}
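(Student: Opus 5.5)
The plan is to combine three earlier results: Hofer stability of the cap profile (\cref{thm:universal-stability}), Hamiltonian-conjugacy invariance (\cref{prop:conjugacy}), and symplectic naturality (\cref{thm:symp-naturality}). The first inequality \eqref{eq:conj-profile-lower} will follow from the triangle inequality for $\Dcap$. The second, \eqref{eq:mapping-class-lower}, is then a specialization.

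\emph{Step 1: $\Dcap$ is a pseudodistance.} For each fixed triple $(J,\ell,q)$, the bottleneck distance on barcodes satisfies the triangle inequality. Taking the supremum over triples preserves it, so $\Dcap$ is symmetric and satisfies the triangle inequality on $\Ham(M,\omega)$.

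\emph{Step 2: reduce to representatives.} Fix $\theta_0,\theta_1\in\Ham(M,\omega)$. Apply \cref{prop:conjugacy} twice, once in each argument; symmetry of $\Dcap$ lets the proposition act on the second slot as well. This gives
\[
 \Dcap(\phi,\psi)=\Dcap(\theta_0\phi\theta_0^{-1},\theta_1\psi\theta_1^{-1}).
\]

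\emph{Step 3: apply stability.} By \cref{thm:universal-stability}, the right-hand side is at most $\distH(\theta_0\phi\theta_0^{-1},\theta_1\psi\theta_1^{-1})$. Since $\Dcap(\phi,\psi)$ does not depend on $\theta_0,\theta_1$, taking the infimum over both conjugators in \eqref{eq:dconj} yields \eqref{eq:conj-profile-lower}. Note that Step 2 is really an equality, so the triangle inequality of Step 1 is not strictly needed there. It is only needed implicitly to see that $\Dcap$ descends to a well-defined function on the Hamiltonian-conjugacy quotient.

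\emph{Step 4: the mapping-class bound.} Put $\psi=\Csymp{\eta}{\phi}$. This lies in $\Ham(M,\omega)$ because $\Ham$ is normal in $\Symp$. Then \eqref{eq:mapping-profile-identity} identifies $\Dcap(\phi,\Csymp{\eta}{\phi})$ with $\DeltaCap_\eta(\phi)$, and \eqref{eq:mapping-class-lower} follows.

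\emph{Main obstacle.} The only real subtlety is making sure the barcodes in \cref{thm:symp-naturality} and \cref{prop:conjugacy} are the intrinsic barcodes of time-one maps, so that they are the same objects compared in \cref{thm:universal-stability}. This means the normalized Hamiltonian $H^\eta_t=H_t\circ\eta^{-1}$ must generate the correct element after descent to $\Ham(M,\omega)$. I would address this by invoking the descent discussed in the "Intrinsic dependence" subsection: by asphericity, the normalized filtered theory depends only on the time-one map, so any choice of generating Hamiltonian gives the same barcodes. A second point is that the suprema may be infinite. The inequalities still make sense in $[0,\infty]$, and in fact the stability bound shows the suprema are finite.
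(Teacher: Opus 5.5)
Your proposal is correct and follows essentially the same route as the paper. You use conjugation invariance of the decorated barcodes in both slots, then the universal Hofer bound of \cref{thm:universal-stability}, then take the infimum over $\theta_0,\theta_1$, and you obtain \eqref{eq:mapping-class-lower} by specializing to $\psi=\Csymp{\eta}{\phi}$ via \eqref{eq:mapping-profile-identity}. Your Step~1 is superfluous even for the descent to the quotient, since \cref{prop:conjugacy} already gives equality of the individual barcodes $\Bcap_{J,\ell,q}$ under Hamiltonian conjugation.
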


\begin{proof}
For $\theta_0,\theta_1\in\Ham(M,\omega)$, Hamiltonian-conjugacy invariance in
\cref{prop:conjugacy}, applied in both variables, gives
\[
 \Dcap(\theta_0\phi\theta_0^{-1},
       \theta_1\psi\theta_1^{-1})=\Dcap(\phi,\psi).
\]
The universal Hofer estimate then implies
\[
 \distH(\theta_0\phi\theta_0^{-1},
        \theta_1\psi\theta_1^{-1})\geq\Dcap(\phi,\psi).
\]
Taking the infimum proves \eqref{eq:conj-profile-lower}.  The special case
\eqref{eq:mapping-class-lower} follows from
\cref{thm:symp-naturality}.
\end{proof}

\begin{remark}[The two kinds of conjugation]
\label{rem:two-conjugations}
If $\eta\in\Ham(M,\omega)$, then
$\Csymp{\eta}{\phi}\in\OrbHam(\phi)$ and hence
\[
 \dHamConj\bigl([\phi]_{\mathrm{Ham}},
                 [\Csymp{\eta}{\phi}]_{\mathrm{Ham}}\bigr)=0.
\]
Moreover $\eta^*=\id$ on ordinary cohomology, so
$\DeltaCap_\eta(\phi)=0$.  Therefore every nonzero distance detected by
\eqref{eq:mapping-class-lower} requires an external conjugator
$\eta\in\Symp(M,\omega)\setminus\Ham(M,\omega)$, although the orbit relation
on the left is always Hamiltonian conjugacy.
\end{remark}

\begin{corollary}[Dependence on the cohomological symplectic action]
\label{cor:mapping-class-dependence}
The number $\DeltaCap_\eta(\phi)$ is unchanged if $\phi$ is replaced by a
Hamiltonian conjugate.  It is also unchanged if $\eta$ is replaced by
$\theta_0\eta\theta_1$ with $\theta_0,\theta_1\in\Ham(M,\omega)$.  More
precisely, it depends on $\eta$ only through the induced graded-ring
automorphism
\[
 \eta^*:H^*(M;\K)\longrightarrow H^*(M;\K).
\]
Consequently it is constant on symplectic isotopy classes and is determined
by the action of the symplectic mapping class on cohomology.
\end{corollary}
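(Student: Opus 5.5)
Three things have to be checked: invariance in $\phi$, invariance under $\eta\mapsto\theta_0\eta\theta_1$, and the fact that $\DeltaCap_\eta(\phi)$ sees $\eta$ only through $\eta^*$. All three follow from results already established.

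\emph{Invariance in $\phi$.} Let $\theta\in\Ham(M,\omega)$. By \cref{prop:conjugacy},
\[
\Bcap_{K,\ell,q}(\theta\phi\theta^{-1})=\Bcap_{K,\ell,q}(\phi)
\]
for every homogeneous ideal $K$. Apply this once with $K=J$ and once with $K=\eta^*J$. Then every term in the supremum \eqref{eq:DeltaCap} is unchanged, so $\DeltaCap_\eta(\theta\phi\theta^{-1})=\DeltaCap_\eta(\phi)$.

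\emph{Dependence only on $\eta^*$.} The formula \eqref{eq:DeltaCap} involves $\eta$ solely through the ideal $\eta^*J$. Note that $\eta^*J$ is again a homogeneous ideal contained in $A^{>0}$, since $\eta^*$ is a degree-preserving ring automorphism. Hence if $\eta,\eta'\in\Symp(M,\omega)$ satisfy $\eta^*=\eta'^*$ on $H^*(M;\K)$, the two suprema are term-by-term identical. This is a purely formal observation.

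\emph{Composition with Hamiltonian diffeomorphisms.} We have $(\theta_0\eta\theta_1)^*=\theta_1^*\eta^*\theta_0^*$. Each $\theta_i$ is isotopic to the identity, so $\theta_i^*=\id$, and therefore $(\theta_0\eta\theta_1)^*=\eta^*$. The previous step then gives invariance. More generally, if $\eta_s$ is a symplectic isotopy, the pullback $\eta_s^*$ on cohomology is locally constant in $s$ by homotopy invariance, hence constant. So $\DeltaCap_\eta(\phi)$ is constant on symplectic isotopy classes. Finally, $\eta^*$ is determined by the mapping class of $\eta$, which gives the last assertion of the statement.

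\emph{Cross-check and main point of care.} As a consistency check, one may instead use \cref{thm:symp-naturality}, which gives $\DeltaCap_\eta(\phi)=\Dcap(\phi,\Csymp{\eta}{\phi})$, together with \cref{prop:conjugacy} applied in the second slot. This yields
\[
\Csymp{\theta_0\eta\theta_1}{\phi}=\theta_0\bigl(\Csymp{\eta}{\theta_1\phi\theta_1^{-1}}\bigr)\theta_0^{-1},
\]
which again reduces to the first step. The only step needing any care is the first one, and specifically its bookkeeping. The conjugation-invariance of \cref{prop:conjugacy} must be applied with the same fixed labels $J$ and $\eta^*J$ on both sides. This is legitimate because those ideals are chosen independently of $\phi$, so I expect no genuine obstacle.
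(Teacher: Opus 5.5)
Your proposal is correct and is essentially the paper's own argument: the paper likewise reads everything off the definition \eqref{eq:DeltaCap}, using the Hamiltonian-conjugacy invariance of the cap barcodes (\cref{prop:conjugacy}) for invariance in $\phi$, and the trivial action of $\Ham(M,\omega)$ on ordinary cohomology to get $(\theta_0\eta\theta_1)^*=\eta^*$. Your version simply spells out details the paper leaves implicit, namely the bookkeeping with the fixed labels $J$ and $\eta^*J$ and the homotopy-invariance argument for symplectic isotopies.
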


\begin{proof}
Hamiltonian diffeomorphisms act trivially on ordinary cohomology, and the
cap barcodes are invariant under Hamiltonian conjugation.  Both assertions
therefore follow directly from \eqref{eq:DeltaCap}.
\end{proof}

Let
\[
 \QHam(M,\omega):=\Ham(M,\omega)/\!\sim_{\mathrm{Ham\text{-}conj}}
\]
be the set of Hamiltonian-conjugacy orbits, equipped with the pseudometric
$\dHamConj$.  Let $\mathfrak B_A$ denote the product, over all homogeneous
ideals $J\subseteq A^{>0}$, depths $\ell$, and degrees $q$, of the
corresponding barcode spaces, equipped with the supremum bottleneck metric.

\begin{proposition}[The cap-profile map]
\label{prop:profile-map}
The assignment
\[
 \ProfileCap:\QHam(M,\omega)\longrightarrow\mathfrak B_A,
 \qquad
 \ProfileCap([\phi]_{\mathrm{Ham}})=
 \bigl(\Bcap_{J,\ell,q}(\phi)\bigr)_{J,\ell,q},
\]
is well defined and $1$-Lipschitz.  Symplectic conjugation defines an action
of $\Symp(M,\omega)$ on $\QHam(M,\omega)$ by
\[
 \eta\cdot[\phi]_{\mathrm{Ham}}
 :=[\Csymp{\eta}{\phi}]_{\mathrm{Ham}}.
\]
Since $\Ham(M,\omega)$ acts trivially on this quotient, the action factors
through $\Symp(M,\omega)/\Ham(M,\omega)$.  The profile map is equivariant for
the action on $\mathfrak B_A$ that permutes ideal labels by pullback:
\begin{equation}
\label{eq:profile-equivariance}
 \bigl(\ProfileCap(\eta\cdot[\phi]_{\mathrm{Ham}})\bigr)_{J,\ell,q}
 =\bigl(\ProfileCap([\phi]_{\mathrm{Ham}})\bigr)_{\eta^*J,\ell,q}.
\end{equation}
\end{proposition}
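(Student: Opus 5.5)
The proposition has four parts: well-definedness, the Lipschitz bound, the action, and equivariance. I will treat them in that order, each resting on results already established.

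\emph{Well-definedness and Lipschitz bound.} By \cref{prop:conjugacy}, $\Bcap_{J,\ell,q}(\theta\phi\theta^{-1})=\Bcap_{J,\ell,q}(\phi)$ for every $\theta\in\Ham(M,\omega)$ and every label $(J,\ell,q)$. So the tuple $(\Bcap_{J,\ell,q}(\phi))_{J,\ell,q}$ depends only on $[\phi]_{\mathrm{Ham}}$, and $\ProfileCap$ is well defined.

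For the Lipschitz bound, note that the supremum bottleneck distance between $\ProfileCap([\phi]_{\mathrm{Ham}})$ and $\ProfileCap([\psi]_{\mathrm{Ham}})$ is by definition $\Dcap(\phi,\psi)$. \cref{thm:conj-quotient} then gives $\Dcap(\phi,\psi)\le\dHamConj([\phi]_{\mathrm{Ham}},[\psi]_{\mathrm{Ham}})$, which is exactly the $1$-Lipschitz property.

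\emph{The action.} First, $\Csymp{\eta}{\phi}\in\Ham(M,\omega)$, because $\Ham(M,\omega)$ is normal in $\Symp(M,\omega)$: we have $\eta\phi_H^t\eta^{-1}=\phi^t_{H^\eta}$ with $H^\eta$ as in \eqref{eq:conjugated-Hamiltonian}. Next, the action is well defined on orbits:
\[
\eta\theta\phi\theta^{-1}\eta^{-1}=(\eta\theta\eta^{-1})\,\eta\phi\eta^{-1}\,(\eta\theta\eta^{-1})^{-1},
\]
and $\eta\theta\eta^{-1}\in\Ham(M,\omega)$, again by normality. The action axioms, $\id\cdot[\phi]_{\mathrm{Ham}}=[\phi]_{\mathrm{Ham}}$ and $(\eta_1\eta_2)\cdot[\phi]_{\mathrm{Ham}}=\eta_1\cdot(\eta_2\cdot[\phi]_{\mathrm{Ham}})$, are immediate.

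The action is trivial for $\theta\in\Ham(M,\omega)$, since $\theta\phi\theta^{-1}$ lies in $\OrbHam(\phi)$ by definition. Combined with normality, this shows the action factors through $\Symp(M,\omega)/\Ham(M,\omega)$. I would also remark that each $\eta$ acts isometrically for $\dHamConj$, by bi-invariance of $\distH$ under conjugation by symplectomorphisms, although the statement does not require this.

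\emph{Equivariance.} Identity \eqref{eq:profile-equivariance} is exactly \eqref{eq:symp-bar-naturality} of \cref{thm:symp-naturality}, evaluated on the representative $\Csymp{\eta}{\phi}$. This is legitimate because the profile is independent of the representative, as shown in the first part.

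The one point needing care is that relabeling $J\mapsto\eta^*J$ really defines an action on $\mathfrak B_A$. This holds because $\eta^*$ is a graded ring automorphism of $A$, so it maps homogeneous ideals in $A^{>0}$ bijectively to homogeneous ideals in $A^{>0}$. Since $(\eta_1\eta_2)^*=\eta_2^*\eta_1^*$, the relabeling $(\beta_J)\mapsto(\beta_{\eta^*J})$ is compatible with composition in the correct order. It is also an isometry of the supremum bottleneck metric, since it only permutes coordinates. I expect this bookkeeping of left versus right action conventions to be the only real obstacle; everything else is a direct citation of earlier results.
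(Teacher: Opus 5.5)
Your proposal is correct and follows the paper's proof step for step. Well-definedness comes from \cref{prop:conjugacy}, the $1$-Lipschitz bound from \cref{thm:conj-quotient}, the orbit-level action from normality of $\Ham(M,\omega)$ via the identity $\eta\theta\phi\theta^{-1}\eta^{-1}=(\eta\theta\eta^{-1})\,\eta\phi\eta^{-1}\,(\eta\theta\eta^{-1})^{-1}$, and equivariance from \cref{thm:symp-naturality}. Your additional checks are also correct: the action axioms, the verification that $J\mapsto\eta^*J$ gives a left action on $\mathfrak B_A$ compatible with composition, and the isometry remark are all details the paper leaves implicit.
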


\begin{proof}
Hamiltonian-conjugacy invariance makes
\(\ProfileCap\) well defined, and the \(1\)-Lipschitz assertion is
exactly \cref{thm:conj-quotient}.

We next verify the claimed action on
\(\QHam(M,\omega)\).  Suppose that
\[
\psi=\theta\phi\theta^{-1}
\]
for some \(\theta\in\Ham(M,\omega)\), and let
\(\eta\in\Symp(M,\omega)\).  Then
\[
\eta\psi\eta^{-1}
=
(\eta\theta\eta^{-1})
(\eta\phi\eta^{-1})
(\eta\theta\eta^{-1})^{-1}.
\]
Since $\Ham(M,\omega)$ is a normal subgroup of $\Symp(M,\omega)$ (cf. \cite{HZ}), 
one has
\[
\eta\theta\eta^{-1}\in\Ham(M,\omega).
\]
Consequently,
\[
[\eta\psi\eta^{-1}]_{\mathrm{Ham}}
=
[\eta\phi\eta^{-1}]_{\mathrm{Ham}},
\]
so symplectic conjugation preserves the
Hamiltonian-conjugacy equivalence relation and therefore induces a
well-defined action of \(\Symp(M,\omega)\) on
\(\QHam(M,\omega)\).

If \(\eta\in\Ham(M,\omega)\), then
\[
[\eta\phi\eta^{-1}]_{\mathrm{Ham}}
=
[\phi]_{\mathrm{Ham}}
\]
for every \(\phi\in\Ham(M,\omega)\).  Thus
\(\Ham(M,\omega)\) acts trivially on
\(\QHam(M,\omega)\), and the action factors through
\[
\Symp(M,\omega)/\Ham(M,\omega).
\]
Finally, formula \eqref{eq:profile-equivariance} follows from
\cref{thm:symp-naturality}.
\end{proof}

There is also an asymptotic version which requires no existence theorem for a
limit.  Set
\[
 \underline{\Delta}^{\mathrm{cap}}_{\eta}(\phi)
 :=\liminf_{k\to\infty}\frac{1}{k}\DeltaCap_\eta(\phi^k).
\]
Then \cref{thm:conj-quotient} gives
\begin{equation}
\label{eq:stable-mapping-lower}
 \liminf_{k\to\infty}\frac1k
 \dHamConj\bigl([\phi^k]_{\mathrm{Ham}},
                 [\Csymp{\eta}{\phi^k}]_{\mathrm{Ham}}\bigr)
 \geq\underline{\Delta}^{\mathrm{cap}}_{\eta}(\phi).
\end{equation}
We do not claim that the right-hand side is positive in general; finding
large-scale examples is a separate geometric problem.

\section{A genuine Hamiltonian separation example}
\label{sec:real-example}

We now show that the ideal-decorated package can distinguish actual
Hamiltonians even when the ordinary Floer barcode and all ordinary spectral
invariants agree.  The Morse-theoretic input is the genus-two construction of
Polterovich--Shelukhin--Stojisavljevi\'c
\cite[Section~2.4]{PolterovichShelukhinStojisavljevic}.

\begin{lemma}[Genus-two Morse input]
\label{lem:PSSS-Morse}
There exist a closed oriented surface $\Sigma_2$, a handle-exchange
diffeomorphism $\eta:\Sigma_2\to\Sigma_2$, a Morse function
$f:\Sigma_2\to\R$, the function $g=f\circ\eta$, and a class
$e\in H_1(\Sigma_2;\K)$ such that:
\begin{enumerate}
\item the ordinary sublevel homology persistence modules of $f$ and $g$ have
identical barcodes;
\item all minimax spectral invariants agree:
$c(z,f)=c(z,g)$ for every $0\ne z\in H_*(\Sigma_2;\K)$;
\item the image persistence modules of intersection by $e$ have distinct
barcodes.  In particular, for some degree $q_0$,
\[
 c_0:=d_{\mathrm B}\bigl(\barc(\im(e\cap-)_{f,q_0}),
                          \barc(\im(e\cap-)_{g,q_0})\bigr)>0.
\]
The two image barcodes differ by a finite bar whose unscaled interval is
$(\eps,a]$ in the notation of the cited construction.
\end{enumerate}
\end{lemma}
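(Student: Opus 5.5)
This lemma is essentially a citation of the construction in \cite[Section~2.4]{PolterovichShelukhinStojisavljevic}. The plan is to recall that construction explicitly and verify the three properties in the form stated here.

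\emph{Construction.} Model $\Sigma_2$ as a connected sum of two tori with handles $T_1$ and $T_2$, arranged symmetrically so that an involution $\eta$ (rotation by $\pi$ about an axis through the neck) swaps the two handles. The function $f$ will be a perfect-plus-cancelling-pair Morse function that is \emph{not} $\eta$-invariant. It has:
\begin{itemize}
\item a minimum and a maximum;
\item the four index-one critical points generating $H_1$, with values chosen so that the handle $T_1$ is born at lower levels than $T_2$;
\item one extra cancelling pair (index one, index two) with values $\eps<a$, placed on $T_1$.
\end{itemize}
Set $g=f\circ\eta$. Then $g$ has the same critical values as $f$, but the cancelling pair now sits on the other handle.

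\emph{Properties (1) and (2).} Since $\eta$ is a diffeomorphism, $\eta$ induces isomorphisms of sublevel sets $\{g\le t\}=\eta^{-1}\{f\le t\}$. These commute with the inclusions, so the sublevel persistence modules of $f$ and $g$ are isomorphic as $\K$-persistence modules, and (1) follows. For (2), note $c(z,g)=c(\eta_*z,f)$. So I need $c(\eta_*z,f)=c(z,f)$ for every $z\ne0$. This holds in degrees $0$ and $2$ because $\eta_*=\pm\id$ there. In degree one it is the real check. The critical values of the four $H_1$-generators must be arranged so that the minimax value of every class is determined only by which values occur, not by their handle labels. Following the reference, I would give all four index-one generators a common critical value $c_1$. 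Then $c(z,f)=c_1$ for every nonzero $z\in H_1$, and the same holds after applying $\eta_*$. The cancelling pair does not affect minimax values of nonzero classes.

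\emph{Property (3), the main obstacle.} Take $e$ to be the meridian class of $T_1$, and consider the image persistence module of $e\cap-\colon H_*(\{f\le t\})\to H_{*-1}(\{f\le t\})$, in the sense of persistence with operators. Take $q_0=2$, or $q_0=1$ as in the reference. The finite bar $(\eps,a]$ created by the cancelling pair lies on $T_1$, where it meets $e$ nontrivially. Hence the finite bar $(\eps,a]$ appears in $\barc(\im(e\cap-)_{f,q_0})$. For $g$ the pair sits on $T_2$, which is disjoint from a representative of $e$, so that bar is absent in $\barc(\im(e\cap-)_{g,q_0})$. All infinite bars agree in both modules, since on full homology both images are $e\cap H_*$.

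This gives $c_0\ge$ a positive quantity. For example, any matching must either leave the bar unmatched, at cost $(a-\eps)/2$, or match it to a different finite bar, whose endpoints are separated by a definite amount. The delicate part is a careful chain-level computation in the sublevel sets near the cancelling pair, showing that the intersection with $e$ of the relevant relative cycle is nonzero for $f$ and zero for $g$. I would take this verbatim from the cited computation rather than redo it, only checking that the endpoint conventions match $(\eps,a]$.
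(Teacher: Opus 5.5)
\textbf{The reconstructed construction fails property (3).} Citing the construction is all the paper does. Its proof reads: this is the computation of Polterovich--Shelukhin--Stojisavljevi\'c, Section~2.4, and unequal finite barcodes have positive bottleneck distance. The problem is that the construction you actually write down does not satisfy (3), and no choice of critical values repairs it.

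Your $f$ has a single local minimum, so every sublevel set $\{f\le t\}$ with $t\ge\min f$ is connected. Two consequences follow.
\begin{enumerate}
\item The degree-$0$ sublevel module is one infinite bar. The image of $e\cap-$ in degree $0$ is a submodule of it, so it is either zero or one infinite bar.
\item Since $H_2(\{f\le t\})=0$ for $t<\max f$, the image in degree $1$ is either zero or the bar starting at $\max f$.
\end{enumerate}
So no finite bar, in particular not $(\eps,a]$, can occur in either image.

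Your claim that the finite bar of the index $(1,2)$ cancelling pair ``meets $e$ nontrivially'' is false. Its cycle bounds in $\{f\le a\}$, so it is null-homologous in $\Sigma_2$. Hence it has zero algebraic intersection with any cycle $Z$ representing $e$, and in a connected sublevel set $e\cap x=(Z\cdot x)[\mathrm{pt}]$.

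More generally, with connected sublevel sets $e\cap-$ factors through $H_1(\{f\le t\})\to H_1(\Sigma_2)$. So the degree-$0$ image is the bar born at $\min\{c(z,f)\mid e\cdot z\neq0\}$, and property (2) forces the same value for $g$. Thus (2) and (3) are incompatible for every $f$ with a single minimum. Your text is also internally inconsistent: it first has $T_1$ born below $T_2$, which would itself violate (2), and then makes all four index-one values equal.

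\textbf{What is missing is an extra local minimum.} With it, $\{f\le t\}$ is disconnected on an interval ending at the merging saddle level $a$. Then $\im(e\cap-)$ at level $t$ is spanned by the point classes of the components $C$ with $\PD(e)|_C\neq0$. A finite bar in the image appears exactly when two such components merge. The asymmetry between $e$ and $\eta_*e$ must therefore come from how the $H_1$-images of the components pair with $e$, not from where a cancelling pair sits.

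For instance, take:
\begin{enumerate}
\item two minima at level $0$;
\item two index-one points at level $\eps$, making the sublevel set two disjoint annuli with cores representing $a_1$ and $a_1+a_2$;
\item the merging saddle at level $a$;
\item the other two index-one points at a common level above $a$;
\item then the maximum.
\end{enumerate}
Since $\eta_*$ preserves $\operatorname{span}\{a_1,a_2\}$, property (2) holds. For $e=b_1$, the image barcodes are $\{(\eps,\infty),(\eps,a]\}$ for $f$ and $\{(\eps,\infty)\}$ for $g$, up to endpoint conventions. This is because $b_1$ pairs nontrivially with both $a_1$ and $a_1+a_2$, whereas $\eta_*b_1=\pm b_2$ pairs trivially with $a_1$. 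The two barcodes differ exactly by the finite bar $(\eps,a]$.
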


\begin{proof}
This is precisely the construction and barcode computation in
\cite[Section~2.4]{PolterovichShelukhinStojisavljevic}.  Since the barcodes are
finite and unequal, their bottleneck distance is strictly positive.
\end{proof}

Furthermore, 
one can choose an area form $\sigma$ invariant under the handle-exchange
diffeomorphism $\eta$ such that $\eta\in\Symp(\Sigma_2,\sigma)\setminus\Ham(\Sigma_2,\sigma)$ in the above construction.  More precisely, choose the handle-exchange diffeomorphism
\(\eta:\Sigma_2\to\Sigma_2\)
to be the standard orientation-preserving involution interchanging
the two handles.  Let
\[
a_1,b_1,a_2,b_2
\]
be a standard symplectic basis of
\(H_1(\Sigma_2;\mathbb Z)\), with \(a_i,b_i\) supported on the
\(i\)-th handle.  Then, up to the harmless signs determined by the
orientations of the chosen curves,
\[
\eta_*a_1=a_2,\qquad
\eta_*b_1=b_2,\qquad
\eta_*a_2=a_1,\qquad
\eta_*b_2=b_1.
\]
In particular,
\[
\eta_*\neq\id
\quad\text{on }H_1(\Sigma_2;\mathbb Z).
\]
Since every diffeomorphism isotopic to the identity acts trivially
on homology, \(\eta\) represents a nontrivial mapping class.

To choose an invariant area form, start with any area form
\(\sigma_0\) and set
\[
\sigma
:=
\frac12\bigl(\sigma_0+\eta^*\sigma_0\bigr).
\]
Since \(\eta^2=\id\), one has
\[
\eta^*\sigma=\sigma,
\]
and hence
\[
\eta\in\Symp(\Sigma_2,\sigma).
\]
On the other hand, every Hamiltonian diffeomorphism is connected to
the identity by a Hamiltonian isotopy and therefore represents the
trivial mapping class.  Consequently,
\(
\eta\notin\Ham(\Sigma_2,\sigma),
\)
so that
\[
\eta\in
\Symp(\Sigma_2,\sigma)\setminus
\Ham(\Sigma_2,\sigma).
\]
Note that $f$ and $g$ have the same mean
value, denoted by $\bar f=\bar g$.  For sufficiently small $\tau>0$, define
normalized autonomous Hamiltonians
\begin{equation}
\label{eq:small-Hfg}
 H_f=-\tau(f-\bar f),\qquad H_g=-\tau(g-\bar g).
\end{equation}
Let $u=\PD(e)\in H^1(\Sigma_2;\K)$ and let $J_u=(u)$.

\begin{theorem}[Symplectically conjugate Hamiltonians separated in the Hamiltonian-conjugacy quotient]
\label{thm:real-Hamiltonian}
For all sufficiently small $\tau>0$, the Hamiltonians in
\eqref{eq:small-Hfg} are non-degenerate and satisfy
\begin{align*}
 \barc(HF_*^{(-\infty,\bullet]}(H_f))
 &=\barc(HF_*^{(-\infty,\bullet]}(H_g)),\\
 c(\alpha;H_f)&=c(\alpha;H_g)
 \quad(0\ne\alpha\in H^*(\Sigma_2;\K)).
\end{align*}
Moreover,
\[
 \phi_{H_g}=\Csymp{\eta^{-1}}{\phi_{H_f}},
\]
while for the principal ideal $J_u=(u)$,
\begin{equation}
\label{eq:real-bar-distance}
 d_{\mathrm B}\bigl(\Bcap_{J_u,1,q_0}(H_f),
                     \Bcap_{J_u,1,q_0}(H_g)\bigr)
 =\tau c_0>0.
\end{equation}
Consequently,
\begin{equation}
\label{eq:conjugacy-lower}
 \dHamConj\bigl([\phi_{H_f}]_{\mathrm{Ham}},[\phi_{H_g}]_{\mathrm{Ham}}\bigr)
 \geq\tau c_0.
\end{equation}
Equivalently, the two maps are conjugate by the non-Hamiltonian
symplectic diffeomorphism $\eta^{-1}$, but their Hamiltonian-conjugacy
orbits have positive Hofer distance.
\end{theorem}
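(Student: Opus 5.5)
The plan is to reduce everything to the small-Hamiltonian Morse--Floer identification, transport the three items of \cref{lem:PSSS-Morse} through it, and then invoke \cref{thm:conj-quotient}.

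\emph{Step 1 (non-degeneracy and the Morse--Floer dictionary).} For $\tau>0$ below a threshold depending on $f$ and the metric, the $1$-periodic orbits of the autonomous $H_f=-\tau(f-\bar f)$ are exactly the constant orbits at $\operatorname{Crit}(f)$, and they are non-degenerate because $f$ is Morse. For a $\tau$-small time-independent Hamiltonian with a generic $t$-independent almost complex structure, all Floer cylinders of index difference one are $t$-independent. Hence $CF_*(H_f)$ is the Morse complex of $f$ with action $\cA_{H_f}(p)=\tau(f(p)-\bar f)$ and the appropriate degree shift.

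\emph{Step 2 (identify the cap action).} For $u=\PD(e)$, the cap action on filtered Floer homology should coincide with the Morse intersection/cap product with $e$. I would argue this by the same $t$-independence: cap trajectories with a marked constraint on a representative of $e$ reduce to Morse trajectories through that representative. This is the Morse counterpart of Schwarz's construction, and also the small-Hamiltonian case of \cite[Section~3]{PolterovichShelukhinStojisavljevic}. So the filtered $A$-module $a\mapsto V_a^{H_f}$ is the sublevel Morse homology of $f$ at level $a/\tau+\bar f$, with cap by $u$ acting as $e\cap-$. The same holds for $g$, with the same constant $\bar g=\bar f$, since $\eta$ preserves $\sigma$ and $g=f\circ\eta$.

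\emph{Step 3 (conclusions).} By \cref{lem:principal}, $\Bcap_{J_u,1,q_0}(H_f)$ is the image barcode of $e\cap-$ for $f$, rescaled by $\tau$ and shifted by $-\tau\bar f$. Scaling the barcodes by $\tau$ scales bottleneck distance by $\tau$, and the common shift does not change it. So \eqref{eq:real-bar-distance} follows from item (3) of \cref{lem:PSSS-Morse}. Equality of ordinary barcodes follows from item (1). Equality of spectral invariants follows from item (2), combined with the standard identity that $c(\alpha;H_f)$ equals the rescaled and shifted Morse minimax value of $\PD(\alpha)$ for small $H_f$. The conjugacy relation is direct: $H_g=H_f\circ\eta$, so in the notation \eqref{eq:conjugated-Hamiltonian} $H_g=H_f^{\eta^{-1}}$, hence $\phi_{H_g}=\eta^{-1}\phi_{H_f}\eta$. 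Finally, \eqref{eq:conjugacy-lower} follows from \eqref{eq:conj-profile-lower}, since $\Dcap$ dominates the single $(J_u,1,q_0)$ term.

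\emph{Main obstacle.} The hard step is Step 2: the filtered, chain-level identification of the Floer cap action with Morse intersection for small autonomous Hamiltonians, including the grading shift and the time-independence of the relevant (index-zero, marked) moduli spaces. The orientation and sign conventions must also be matched so that $q_0$ transports correctly. My first approach would be the standard Salamon--Zehnder argument that small $t$-independent Hamiltonians have only $t$-independent Floer solutions in low index, extended to the cut-down cap moduli spaces. Failing that, I would fall back on a filtered PSS-type comparison with Morse homology that respects the cap action.
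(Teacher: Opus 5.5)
Your proposal is correct and follows essentially the same route as the paper. Both use the small-Hamiltonian Morse--Floer identification with the affine change $t\mapsto\tau(t-\bar f)$, then \cref{lem:principal} to reduce the depth-one $J_u$-layer to the image of $e\cap-$, exact $\tau$-scaling of bottleneck distance, $H_g=H_f\circ\eta$ (with $\bar g=\bar f$) for the conjugacy relation, and finally \cref{thm:conj-quotient}. Your Step~2, based on $t$-independence of the index-one marked cylinders, only spells out the cap-action identification that the paper simply invokes as the Morse--Floer/PSS correspondence.
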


\begin{proof}
For $\tau$ sufficiently small, the only contractible one-periodic orbits of
$H_f$ and $H_g$ are the critical points of $f$ and $g$.  With the action
convention of \eqref{eq:action},
\[
 \cA_{H_f}(p)=\tau(f(p)-\bar f),\qquad
 \cA_{H_g}(p)=\tau(g(p)-\bar g).
\]
The filtered Floer complexes identify with the corresponding Morse complexes
after the common affine change of filtration parameter
$t\mapsto\tau(t-\bar f)$.  Under the Morse--Floer/PSS identification,
cap-action by $u=\PD(e)$ corresponds to intersection by $e$, and
\cref{lem:principal} identifies $\CapLayer_{J_u}^1$ with the image of this
single operator.

All ordinary bars and all spectral invariants therefore undergo the same
affine rescaling for $f$ and $g$.  A common translation leaves bottleneck
distance unchanged, while multiplication of all endpoints by $\tau$ scales
it by $\tau$; this proves \eqref{eq:real-bar-distance}.

The area form was chosen $\eta$-invariant and $g=f\circ\eta$, hence
$H_g=H_f\circ\eta$.  The Hamiltonian vector fields are related by
$d\eta\,X_{H_g}=X_{H_f}\circ\eta$, so
$\phi_{H_g}=\Csymp{\eta^{-1}}{\phi_{H_f}}$.  Finally,
\cref{thm:conj-quotient} and \eqref{eq:real-bar-distance} give
\eqref{eq:conjugacy-lower}.
\end{proof}

\begin{remark}[Strictly new information on the conjugacy quotient]
\label{cor:not-classical}
The ordinary Floer barcode together with the complete family of Schwarz
spectral invariants does not determine the cap-profile pseudodistance on
Hamiltonian-conjugacy orbits.  In particular, these classical data do not
detect the positive lower bound in \eqref{eq:conjugacy-lower}.
\end{remark}

\begin{corollary}[A nontrivial symplectic-conjugation action]
\label{cor:nontrivial-mcg-action}
For the genus-two surface, the action on $\QHam(\Sigma_2,\sigma)$ induced
by symplectic conjugation is nontrivial.  More precisely, the class of the
handle-exchange map in
$\Symp(\Sigma_2,\sigma)/\Ham(\Sigma_2,\sigma)$ moves
$[\phi_{H_f}]_{\mathrm{Ham}}$ by $\dHamConj$-distance at least $\tau c_0$.
\end{corollary}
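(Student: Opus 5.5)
The plan is to reduce the corollary to \cref{thm:real-Hamiltonian} together with \cref{prop:profile-map}. I fix $\tau>0$ small enough for \cref{thm:real-Hamiltonian} to apply. That theorem gives $\phi_{H_g}=\Csymp{\eta^{-1}}{\phi_{H_f}}$, so in the notation of \cref{prop:profile-map}
\[
 [\phi_{H_g}]_{\mathrm{Ham}}=\eta^{-1}\cdot[\phi_{H_f}]_{\mathrm{Ham}}.
\]
By \eqref{eq:conjugacy-lower} this point lies at $\dHamConj$-distance at least $\tau c_0>0$ from $[\phi_{H_f}]_{\mathrm{Ham}}$. So the class of $\eta^{-1}$ moves $[\phi_{H_f}]_{\mathrm{Ham}}$ by at least $\tau c_0$. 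Since $\eta$ is an involution, $\eta^{-1}=\eta$, and this is the statement for the handle-exchange map itself.

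I will not rely on the involution property. I transport the estimate using the fact that $\eta$ acts isometrically on $(\QHam,\dHamConj)$. Conjugation by $\eta\in\Symp(M,\omega)$ preserves Hofer's metric: if $G$ generates $\phi^{-1}\psi$, then $G\circ\eta^{-1}$ generates $\eta\phi^{-1}\psi\eta^{-1}$ and has the same oscillation norm. Conjugation by $\eta$ also maps Hamiltonian-conjugacy orbits to Hamiltonian-conjugacy orbits, since $\Ham$ is normal in $\Symp$ (as used in the proof of \cref{prop:profile-map}). Taking the infimum in \eqref{eq:dconj} then gives
\[
 \dHamConj\bigl(\eta\cdot[\phi],\eta\cdot[\psi]\bigr)=\dHamConj\bigl([\phi],[\psi]\bigr).
\]
Applying $\eta$ to the pair $[\phi_{H_f}]_{\mathrm{Ham}}$, $\eta^{-1}\cdot[\phi_{H_f}]_{\mathrm{Ham}}$ yields
\[
 \dHamConj\bigl(\eta\cdot[\phi_{H_f}]_{\mathrm{Ham}},[\phi_{H_f}]_{\mathrm{Ham}}\bigr)\ge\tau c_0.
\]
This is the claim for $\eta$.

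Since $\tau c_0>0$, the action is nontrivial. The class of $\eta$ is nontrivial in $\Symp/\Ham$; this holds both because it moves a point of the quotient and by the homology argument already given.

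There is no real obstacle here. The only point needing care is checking that the action of \cref{prop:profile-map} is by $\dHamConj$-isometries, which is the bi-invariance of the Hofer norm under all symplectomorphisms.
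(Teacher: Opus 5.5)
Your argument is correct and is essentially the paper's proof, which simply notes that the handle exchange carries $[\phi_{H_f}]_{\mathrm{Ham}}$ to $[\phi_{H_g}]_{\mathrm{Ham}}$ (tacitly using $\eta^{-1}=\eta$) and then invokes \eqref{eq:conjugacy-lower}. Your extra step, that symplectic conjugation acts on the quotient by $\dHamConj$-isometries, is valid and only serves to make the conclusion independent of the involution property.
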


\begin{proof}
The handle exchange sends $[\phi_{H_f}]_{\mathrm{Ham}}$ to $[\phi_{H_g}]_{\mathrm{Ham}}$, and
\eqref{eq:conjugacy-lower} gives the stated positive displacement.
\end{proof}

\begin{remark}[Originality and scope]
The underlying Morse functions and the distinction by one intersection
operator are due to \cite{PolterovichShelukhinStojisavljevic}.  The new point
used here is the mapping-class interpretation: after Floer realization, the
same computation becomes a positive lower bound of $\dHamConj$-distance between Hamiltonian
conjugacy orbits of two symplectically conjugate maps.  We do not claim the
Morse barcode computation itself as new.
\end{remark}
\section{Cap-depth spectral selectors}
\label{sec:selectors}

\subsection{Definitions and minimax formula}
\label{subsec:cap-depth}
For $0\ne x\in HF_*(H)$, define  the \textit{cap-depth} of $x$ as
\begin{equation}
\label{eq:depth}
 \depth_A(x):=\max\{k\ge0\mid \CapLayer^k(x)\ne0\}.
\end{equation}
Thus every nonzero class has depth at least zero.  Let
$j_a:V_a^H\to HF_*(H)$ be the natural map and define
\begin{equation}
\label{eq:rho}
 \rho_H(x):=\inf\{a\mid x\in\im j_a\}.
\end{equation}
For $0\le k\le L$, set
\begin{equation}
\label{eq:kappa}
 \kappa_k(H):=
 \inf_{\substack{x\ne0\\\depth_A(x)\ge k}}\rho_H(x).
\end{equation}

\begin{proposition}[Persistent characterization]
\label{prop:kappa-PCL}
For $k\ge1$,
\[
 \kappa_k(H)=
 \inf\{a\mid \mcl_A(\im j_a)\ge k\}.
\]
For $k=0$,
\[
 \kappa_0(H)=\inf\{a\mid\im j_a\ne0\}.
\]
\end{proposition}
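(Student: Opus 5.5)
The plan is to prove both formulas by comparing the two sets over which the infima are taken, in each direction. I will use \cref{lem:layer-functor} applied to the $A$-linear map $j_a:V_a^H\to HF_*(H)$. Structure maps commute with $j$, so $j_b\circ i_a^b=j_a$. This gives $\im j_a\subseteq\im j_b$ for $a\le b$. Hence both infima are infima over up-closed subsets of $\R$, and it suffices to show that these two subsets have the same infimum.

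\emph{The case $k\ge1$.} First suppose $\mcl_A(\im j_a)\ge k$. Then $\CapLayer^k(\im j_a)\ne0$, so there are $y\in\im j_a$ and $u_1,\dots,u_k\in\mideal$ with $((y\frownH u_1)\cdots)\frownH u_k\ne0$. In particular $y\ne0$ and $\CapLayer^k(y)\ne0$, so $\depth_A(y)\ge k$. Since $y\in\im j_a$, we have $\rho_H(y)\le a$, which gives $\kappa_k(H)\le a$. Taking the infimum over such $a$ yields $\kappa_k(H)\le\inf\{a\mid\mcl_A(\im j_a)\ge k\}$.

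Conversely, let $x\ne0$ with $\depth_A(x)\ge k$, and let $a>\rho_H(x)$. By the definition of $\rho_H$ and the nesting of images, $x\in\im j_a$. The hypothesis $\CapLayer^k(x)\ne0$ means some length-$k$ word applied to $x$ is nonzero, and $\CapLayer^k(x)\subseteq\CapLayer^k(\im j_a)$. Therefore $\mcl_A(\im j_a)\ge k$. This shows that every $a>\rho_H(x)$ lies in the right-hand set. Taking the infimum over $a$ and then over $x$ gives the reverse inequality.

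\emph{The case $k=0$.} Every nonzero class has depth $\ge0$, so $\kappa_0(H)=\inf_{x\ne0}\rho_H(x)$. The same two-sided argument applies with $\CapLayer^k$ replaced by nonvanishing. If $\im j_a\ne0$, pick $0\ne x\in\im j_a$, so that $\rho_H(x)\le a$. If $x\ne0$ and $a>\rho_H(x)$, then $x\in\im j_a$, so $\im j_a\ne0$.

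The only delicate step is the endpoint convention in $\rho_H$, namely whether $x\in\im j_{\rho_H(x)}$ itself. I avoid this issue by working with $a>\rho_H(x)$, so no closedness of the filtration is needed. For the value $\pm\infty$, the empty-set conventions match on both sides: if no class of depth $\ge k$ exists, then no $\im j_a$ has cap-length $\ge k$, since $\im j_a\subseteq HF_*(H)$. Both infima are then $+\infty$.
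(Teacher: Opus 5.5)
Your proof is correct and follows the paper's own argument: you compare the two index sets in both directions exactly as the paper does. A nonzero length-$k$ word in $\CapLayer^k(\im j_a)$ produces a class of depth $\ge k$ with $\rho_H\le a$, and conversely a class of depth $\ge k$ lying in $\im j_a$ forces $\mcl_A(\im j_a)\ge k$. Your use of the nesting $\im j_a\subseteq\im j_b$ and of thresholds $a>\rho_H(x)$ just makes explicit the ``taking infima'' step that the paper leaves implicit.
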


\begin{proof}
If $\mcl_A(\im j_a)\ge k$, then some class $x\in\im j_a$ has depth at
least $k$, so the right-hand side of \eqref{eq:kappa} is at most $a$.
Conversely, if $x\in\im j_a$ has depth at least $k$, then the $A$-linearity
of $j_a$ implies that the same nonzero cap word occurs in $\im j_a$.
Taking infima proves the assertion.  The case $k=0$ is identical without a
positive-degree action.
\end{proof}

\subsection{Endpoints and spectrality}
For $\alpha\in H^*(M;\mathbb{K})$, 
denote by $c(\alpha;H)$ the Schwarz's selector; see~\cite{Schwarz}.  Under PSS,
\[
 \rho_H(\Phi_H(\alpha))=c(\alpha;H).
\]

\begin{lemma}
\label{lem:regular-depth}
Under the PSS identification, the cap-depth of $\Phi_H(\alpha)$, denoted by $\depth_A(\Phi_H(\alpha))$, equals
\[
 \max\{k\ge0\mid \CapLayer^k(\alpha)\ne0\}.
\]
Write $\alpha=\sum_r\alpha_r$ with $\alpha_r\in H^r(M;\K)$.  Then
\begin{equation}
\label{eq:graded-spectral-max}
 \rho_H(\Phi_H(\alpha))=
 \max_{\alpha_r\ne0}c(\alpha_r;H).
\end{equation}
Let
\(
L=\cl_{\mathbb K}(M)
\)
be the cup-length of \(M\) over \(\mathbb K\).  Then
\[
\depth_A(\Phi_H(\alpha))\geq L
\]
if and only if the degree-zero component \(\alpha_0\) is nonzero.
\end{lemma}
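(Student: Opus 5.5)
The plan is to move everything to the regular module via PSS, prove the three assertions there, and treat the spectral-max formula as the only genuinely Floer-theoretic step.

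\emph{Depth transports to the regular module.} By \eqref{eq:PSS-module}, the module law, and induction on word length, every cap word satisfies
\[
\Phi_H(\alpha)\frownH u_1\frownH\cdots\frownH u_k=\Phi_H(\alpha\smilecup u_1\smilecup\cdots\smilecup u_k).
\]
Since $\Phi_H$ is a linear isomorphism, it maps $\CapLayer^k(\alpha)$, computed in the right regular module $A_A$, isomorphically onto $\CapLayer^k(\Phi_H(\alpha))$. Hence the two depths agree.

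\emph{The depth criterion.} I work in $A_A$, where $\CapLayer^k(\alpha)=\alpha\,\mideal^k$. Both directions reduce to the fact that $\mideal^L\neq0$ while $\mideal^{L+1}=0$, which holds by the definition of cup-length.
\begin{enumerate}
\item Suppose $\alpha_0\ne0$. Pick a nonzero product $w=u_1\smilecup\cdots\smilecup u_L$ with each $u_i$ homogeneous of positive degree. Then $\alpha\smilecup w$ has degree-$\deg w$ component $\alpha_0 w\ne0$, because the other summands $\alpha_r w$ with $r>0$ lie in strictly higher degrees. So $\alpha\,\mideal^L\ne0$ and the depth is at least $L$.
\item Suppose $\alpha_0=0$. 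Then $\alpha\in\mideal$, so $\alpha\,\mideal^L\subseteq\mideal^{L+1}=0$, and the depth is less than $L$.
\end{enumerate}

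\emph{The spectral-max formula \eqref{eq:graded-spectral-max}.} Degrees are preserved: $\Phi_H$ sends $H^r$ into a single Floer degree, so $\Phi_H(\alpha)=\sum_r\Phi_H(\alpha_r)$ with summands in distinct Floer degrees. Filtered Floer homology $V_a^H$ splits by degree, and $j_a$ is degree-preserving, so $\im j_a$ is a graded subspace. Therefore $\Phi_H(\alpha)\in\im j_a$ if and only if each homogeneous component $\Phi_H(\alpha_r)$ lies in $\im j_a$. Taking the infimum over $a$ gives
\[
\rho_H(\Phi_H(\alpha))=\max_r\rho_H(\Phi_H(\alpha_r))=\max_{\alpha_r\ne0}c(\alpha_r;H).
\]
This uses the cited identity $\rho_H\circ\Phi_H=c(\cdot\,;H)$, applied only to homogeneous classes where Schwarz's definition applies, and the convention that a zero component imposes no constraint.

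\emph{Main obstacle.} The only delicate point is this last step: one must check that Schwarz's selector is defined using the same closed endpoint convention and the same inf. The infimum-versus-attained issue is harmless, since a finite maximum of infima of upward-closed sets is the infimum of their intersection.
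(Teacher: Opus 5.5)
Your proposal is correct and follows essentially the paper's argument: the spectral-max formula comes from the degree splitting of $\im j_a$, and the depth criterion from $\mideal^{L+1}=0$. The differences are minor. Where the paper shows that each positive-degree component $\alpha_r$ annihilates the length-$L$ product, you use degree separation; you also make explicit the PSS transport of cap words and the infimum-of-intersection step, which the paper leaves implicit.
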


\begin{proof}
The filtered Floer groups and
the maps to full Floer homology split by degree.  Therefore a sum of
homogeneous classes lies in $\im j_a$ exactly when each of its homogeneous
components lies in $\im j_a$.  This proves \eqref{eq:graded-spectral-max}.

If $\alpha_0=\lambda1$ with $\lambda\ne0$, choose
$u_1,\ldots,u_L\in\mideal$ with $u_1\smilecup\cdots\smilecup u_L\ne0$.
Every positive-degree component of $\alpha$ annihilates this length-$L$
product, since otherwise one would obtain $L+1$ positive-degree factors with
nonzero product.  Hence
\[
 \alpha\smilecup u_1\smilecup\cdots\smilecup u_L
 =\lambda u_1\smilecup\cdots\smilecup u_L\ne0.
\]
Conversely, if $\alpha_0=0$, then every homogeneous component of $\alpha$
has positive degree.  Nonvanishing of $\CapLayer^L(\alpha)$ would again
produce a nonzero product of $L+1$ positive-degree homogeneous factors after
expanding into homogeneous summands, contradicting the definition of $L$.
\end{proof}

\begin{theorem}[Selector hierarchy]
\label{thm:kappa}
For a normalized non-degenerate Hamiltonian $H$:
\begin{enumerate}
\item
\[
 \kappa_0(H)\le\kappa_1(H)\le\cdots\le\kappa_L(H).
\]
\item Every $\kappa_k(H)$ lies in $\Spec(H)$.
\item The endpoints are
\begin{equation}
\label{eq:endpoints}
 \kappa_0(H)=c([M];H),
 \qquad
 \kappa_L(H)=c(1;H).
\end{equation}
\item If $k\ge1$, then
\begin{equation}
\label{eq:gap-kappa}
 \kappa_k(H)\ge\kappa_0(H)+k\hbar(H).
\end{equation}
\end{enumerate}
\end{theorem}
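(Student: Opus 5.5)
The four claims come out of the results already in hand: the persistent characterization in \cref{prop:kappa-PCL}, the strict action drop \eqref{eq:strict-filtered-cap} of \cref{thm:filtered-cap}, and \cref{lem:regular-depth} transported through the PSS isomorphism $\Phi_H$, which is $A$-linear by \eqref{eq:PSS-module}. We prove them in the order (1), (2), (3), (4).

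\emph{Monotonicity and spectrality.} For (1), the index sets in \eqref{eq:kappa} shrink as $k$ grows, since $\CapLayer^{k+1}(x)\subseteq\CapLayer^{k}(x)$ means $\depth_A(x)\ge k+1$ implies $\depth_A(x)\ge k$. An infimum over a smaller set is larger, so the $\kappa_k$ increase. For (2), by \cref{prop:kappa-PCL} each $\kappa_k(H)$ is the infimum of those $a$ at which the $A$-module $\im j_a$ passes a threshold. This condition is either $\mcl_A(\im j_a)\ge k$ or $\im j_a\neq 0$. Because $\mcl_A$ is monotone under inclusion, the condition is upward closed in $a$. The module $\im j_a$ is constant on each component of $\R\setminus\Spec(H)$, and with our $(-\infty,a]$ convention it is right-continuous. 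Hence the infimum is attained and lies in $\Spec(H)$. The set is nonempty because for $a>\max\Spec(H)$ we have $\im j_a=HF_*(H)\cong A$, which has cap-length $L$ by the regular-module computation.

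\emph{Endpoints.} For (3), write $x=\Phi_H(\alpha)$. Since $\Phi_H$ is an $A$-linear isomorphism, $\depth_A(x)$ is the cap-depth of $\alpha$ in $A_A$. By \cref{lem:regular-depth}, $\depth_A(\Phi_H(\alpha))\ge L$ holds exactly when $\alpha_0\neq0$. Combining this with \eqref{eq:graded-spectral-max} gives
\[
\kappa_L(H)=\inf_{\alpha_0\neq 0}\ \max_{\alpha_r\ne 0}c(\alpha_r;H).
\]
This is at most $c(1;H)$, taking $\alpha=1$. It is also at least $c(\alpha_0;H)=c(\lambda 1;H)=c(1;H)$, since scalars do not change selectors. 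For $\kappa_0$, the right-hand side of \eqref{eq:graded-spectral-max} is at least $c(\alpha_r;H)$ for each homogeneous component, so
\[
\kappa_0(H)=\min_{\alpha\neq0} c(\alpha;H).
\]
It remains to show that this minimum equals $c([M];H)$, meaning the selector of the top-degree class $\PD[\mathrm{pt}]$ in Schwarz's normalization. We would invoke Schwarz's triangle inequality for the cap/pair-of-pants product. Every nonzero $\alpha$ satisfies $\alpha\smilecup\beta=\mu\cdot\mathrm{vol}$ for some $\beta$ and some $\mu\neq0$, by Poincaré duality. The filtered cap action is action non-increasing (\cref{thm:filtered-cap}), so $c(\mathrm{vol};H)\le c(\alpha;H)$. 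Hence the minimum is attained at the volume/fundamental class. \textbf{This identification of Schwarz's conventions, namely which class plays the role of $[M]$ and the direction of the inequality, is the step I expect to be the main obstacle.}

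\emph{Gap estimate.} For (4), take $x\neq0$ with $\depth_A(x)\ge k$, together with a nonzero cap word $y=x\frownH u_1\cdots\frownH u_k$, $u_i\in\mideal$. We may take each $u_i$ homogeneous of positive degree, by expanding the word. Suppose $x\in\im j_a$. Applying \eqref{eq:strict-filtered-cap} $k$ times, the cap actions commuting with $j$ by $A$-linearity, gives $y\in\im j_{a-k\hbar(H)}$. Since $y\neq0$, we get $a-k\hbar(H)\ge\kappa_0(H)$. Taking the infimum over $a$ and then over $x$ yields \eqref{eq:gap-kappa}. The remark after \cref{thm:filtered-cap} ensures that $\hbar(H)$ is defined whenever a nonzero word exists.
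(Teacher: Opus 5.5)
Your proposal is correct. Parts (1) and (4) are the paper's arguments in substance: the admissible sets shrink as $k$ grows, and for (4) you iterate \eqref{eq:strict-filtered-cap} along a nonzero cap word and compare the terminal class with $\kappa_0(H)$. Your spectrality argument restates the paper's remark that every $\rho_H(x)$ is one of finitely many action values. You phrase it through \cref{prop:kappa-PCL}, using that $a\mapsto\im j_a$ is constant on each half-open interval $[s_i,s_{i+1})$ between consecutive spectral values.

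The real difference is in (3). The paper cites Schwarz's two extremal inequalities $c([M];H)\le c(\alpha;H)\le c(1;H)$. It uses the lower one to get $\kappa_0$ and the upper one to show that every class with $\alpha_0\ne0$ has spectral level exactly $c(1;H)$.

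You instead do two things:
\begin{enumerate}
\item You get $\kappa_L(H)=c(1;H)$ with no extremal inequality at all. The maximum in \eqref{eq:graded-spectral-max} already dominates $c(\alpha_0;H)=c(1;H)$, and $\alpha=1$ attains the bound.
\item You derive the lower extremal inequality internally from three facts: Poincar\'e duality, the relation \eqref{eq:PSS-module}, and the fact that each $\im j_a$ is an $A$-submodule. The same reasoning applied to $\Phi_H(\alpha)=\Phi_H(1)\frownH\alpha$ would also give the upper inequality, so you have essentially reproved Schwarz's extremal inequalities in this setting.
\end{enumerate}
Your route makes the endpoint identities follow from the filtered module structure alone. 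The paper's route buys brevity.

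The step you flagged as the main obstacle is not a real one. The direction of the inequality is forced by the filtration-nonincreasing cap action you already use. Since $c(\cdot\,;H)$ is indexed by cohomology classes, $[M]$ in \eqref{eq:endpoints} must be read as the top-degree class. This matches the torus computation, where $\kappa_0$ is attained by $e_1\smilecup\cdots\smilecup e_m$.
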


\begin{proof}
The admissible set in \eqref{eq:kappa} decreases as $k$ increases, proving
monotonicity.

For a non-degenerate Hamiltonian, every spectral level $\rho_H(x)$ is an
action value, and only finitely many action values occur.  Hence the infimum
in \eqref{eq:kappa} is a minimum of a nonempty subset of $\Spec(H)$, proving
spectrality.

Schwarz's extremal inequalities (cf.~\cite[Corollary~4.10]{Schwarz}) say
\[
 c([M];H)\le c(\alpha;H)\le c(1;H)
\]
for every nonzero cohomology class $\alpha$.  Since PSS is an isomorphism,
$\kappa_0$ is the minimum of all nonzero spectral levels and therefore equals
$c([M];H)$.  By \cref{lem:regular-depth}, a class of depth at least $L$ has a
nonzero degree-zero component.  Formula \eqref{eq:graded-spectral-max} and
Schwarz's upper extremal inequality show that every such class has spectral
level $c(1;H)$: its degree-zero component is a nonzero multiple of $1$, while
all other homogeneous components have spectral level at most $c(1;H)$.
Hence $\kappa_L=c(1;H)$.

Finally, let $x$ have depth at least $k$.  Choose a nonzero cap word of
length $k$ and let $y$ be its terminal class.  The strict filtration drop in
\cref{thm:filtered-cap} gives
\[
 \rho_H(y)\le\rho_H(x)-k\hbar(H).
\]
Since $\rho_H(y)\ge\kappa_0(H)$, one has
$\rho_H(x)\ge\kappa_0(H)+k\hbar(H)$.  Taking the minimum over $x$ proves
\eqref{eq:gap-kappa}.
\end{proof}

\subsection{Continuation and widths}

\begin{theorem}[Continuation of selectors]
\label{thm:kappa-stability}
For every $k$,
\begin{equation}
\label{eq:kappa-one-sided}
 \kappa_k(K)\le\kappa_k(H)+\Delta^+(H,K).
\end{equation}
Consequently,
\[
 |\kappa_k(H)-\kappa_k(K)|\le\delta(H,K).
\]
If $r:S^1\to\R$, then
\begin{equation}
\label{eq:kappa-shift}
 \kappa_k(H+r)=\kappa_k(H)-\int_0^1r(t)\,dt.
\end{equation}
\end{theorem}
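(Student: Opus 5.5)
The plan is to derive \eqref{eq:kappa-one-sided} from the filtered continuation map of \cref{thm:continuation}, using its $A$-linearity to transport cap-depth. The shift formula \eqref{eq:kappa-shift} will follow from a direct action computation.

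Fix $k$ and $\eps>0$. By \cref{prop:kappa-PCL}, choose $a$ with $a<\kappa_k(H)+\eps$ and a nonzero $x\in\im j_a^H$ of depth at least $k$; for $k=0$ this just means $x\neq0$. Write $x=j_a^H(\tilde x)$ with $\tilde x\in V_a^H$. Let $C=C_{H,K}^a:V_a^H\to V_{a+\Delta^+(H,K)}^K$ be the continuation map. The filtered continuation maps commute with the maps $j$ to full Floer homology, and on full Floer homology $C$ is an $A$-linear isomorphism, since the reverse continuation is its inverse by \cref{thm:continuation}. Hence
\[
j^K_{a+\Delta^+(H,K)}(C\tilde x)=C(x)\neq0 .
\]
Because $C$ is $A$-linear and injective on full homology, any nonzero cap word $x\frownH u_1\cdots\frownH u_k$ is sent to the nonzero word $C(x)\frown_K u_1\cdots\frown_K u_k$. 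Therefore $\depth_A(C x)\ge k$, which gives $\rho_K(Cx)\le a+\Delta^+(H,K)$ and so $\kappa_k(K)\le\kappa_k(H)+\eps+\Delta^+(H,K)$. Letting $\eps\to0$ yields \eqref{eq:kappa-one-sided}.

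Exchanging $H$ and $K$ gives $\kappa_k(H)\le\kappa_k(K)+\Delta^+(K,H)$. Together these give $|\kappa_k(H)-\kappa_k(K)|\le\delta(H,K)$.

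For \eqref{eq:kappa-shift}, note that $H+r$ has the same Hamiltonian vector field, hence the same orbits, Floer trajectories and cap moduli spaces. Its action is
\[
\cA_{H+r}=\cA_H-\int_0^1 r\,dt .
\]
So $V^{H+r}_a=V^H_{a+\int r}$ canonically and $A$-linearly, compatibly with the maps $j$, and every $\rho$ shifts by $-\int r$ while depths are unchanged. Alternatively, one can apply \eqref{eq:kappa-one-sided} in both directions: here $\Delta^+(H,H+r)=\int r$ and $\Delta^+(H+r,H)=-\int r$, which forces equality.

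The main obstacle is the bookkeeping in the first step. One must check that the filtered continuation maps commute with the maps $j_a$ to full Floer homology and that the limiting map is an isomorphism on full homology. Only then does nonvanishing of the cap word survive, and this is what lets depth be preserved rather than merely bounded. If $H$ or $K$ is degenerate, I would reduce to the non-degenerate case by small perturbation, using the Lipschitz estimate just proved.
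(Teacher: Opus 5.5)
Your main line is the paper's own proof. The full continuation map is an $A$-linear isomorphism, so it preserves cap-depth, and the filtered estimate $\rho_K(C_{H,K}x)\le\rho_H(x)+\Delta^+(H,K)$ gives \eqref{eq:kappa-one-sided} after taking infima. The formula \eqref{eq:kappa-shift} comes from $\cA_{H+r}=\cA_H-\int_0^1r\,dt$.

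The step that fails is your ``alternative'' derivation of \eqref{eq:kappa-shift}. Since $\Delta^+(H,H+r)=\int_0^1r\,dt$ and $\Delta^+(H+r,H)=-\int_0^1r\,dt$, the two applications of \eqref{eq:kappa-one-sided} give $\kappa_k(H+r)\le\kappa_k(H)+\int_0^1r\,dt$ and $\kappa_k(H)\le\kappa_k(H+r)-\int_0^1r\,dt$. Together these force $\kappa_k(H+r)=\kappa_k(H)+\int_0^1r\,dt$, which is the opposite sign from \eqref{eq:kappa-shift}. So this argument does not prove the shift formula. It shows instead that \eqref{eq:kappa-one-sided}, as written, contradicts \eqref{eq:kappa-shift} whenever $\int_0^1r\,dt\ne0$.

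The source of the problem is the filtration shift you import from \cref{thm:continuation}. Use the action \eqref{eq:action} and a Floer differential that decreases action. Take the homotopy $G_s=K+\beta(s)(H-K)$ with $\beta(-\infty)=1$ and $\beta(+\infty)=0$. A continuation solution $u$ from $x\in\mathcal P(H)$ to $y\in\mathcal P(K)$ then satisfies
\[
 0\le E(u)=\cA_H(x)-\cA_K(y)-\int_\R\int_0^1\beta'(s)\,(H_t-K_t)(u(s,t))\,dt\,ds .
\]
Since $\beta'\le0$, this yields $\cA_K(y)\le\cA_H(x)+\int_0^1\max_M(H_t-K_t)\,dt=\cA_H(x)+\Delta^+(K,H)$.

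The constant case confirms this. For $K=H+c$ the continuation equation is $s$-independent, so the map is the identity on generators and shifts every action by exactly $-c$. The stated shift is $\Delta^+(H,H+c)=c$, which is too small when $c<0$.

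Normalization does not rescue the literal statement either. Take a $C^2$-small $K=-\tau(f-\bar f)$ and $H=sK$ with $0<s<1$. Then \eqref{eq:kappa-one-sided} at $k=L$ reduces to $\max f+\min f\le 2\bar f$, which fails for suitable $f$.

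What your argument (and the paper's) actually proves is $\kappa_k(K)\le\kappa_k(H)+\Delta^+(K,H)$. Symmetrizing this still gives $|\kappa_k(H)-\kappa_k(K)|\le\delta(H,K)$, and your direct action computation for \eqref{eq:kappa-shift} is correct. So keep both of those, drop the alternative, and state the one-sided bound with $\Delta^+(K,H)$.
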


\begin{proof}
The full continuation map $C_{H,K}:HF_*(H)\to HF_*(K)$ is an $A$-linear
isomorphism, so it preserves cap-depth in both directions.  The filtered
estimate gives
\[
 \rho_K(C_{H,K}x)\le\rho_H(x)+\Delta^+(H,K).
\]
Taking the minimum over all $x$ of depth at least $k$ proves
\eqref{eq:kappa-one-sided}.  Apply the same argument in the reverse
direction for the symmetric estimate.

Adding $r(t)$ does not change the Hamiltonian vector field or the Floer
complex, while \eqref{eq:action} changes every action by
$-\int_0^1r(t)dt$.  The cap-action is unchanged, so every $\rho_H(x)$ and
hence every $\kappa_k$ changes by the same amount.
\end{proof}

\begin{definition}
Define the $k$th cap-spectral width by
\[
 w_k(H):=\kappa_k(H)-\kappa_0(H),
 \qquad 0\le k\le L.
\]
\end{definition}

\begin{corollary}
\label{cor:widths}
The widths are invariant under addition of functions of time and satisfy
\[
 0=w_0(H)\le w_1(H)\le\cdots\le w_L(H)
 =c(1;H)-c([M];H).
\]
Moreover,
\begin{equation}
\label{eq:width-Lipschitz}
 |w_k(H)-w_k(K)|
 \le\Delta^+(H,K)+\Delta^+(K,H)
 =\normHofer{H-K},
\end{equation}
and $w_k(H)\ge k\hbar(H)$ for $k\ge1$.
\end{corollary}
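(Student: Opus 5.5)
The corollary follows from the definition $w_k=\kappa_k-\kappa_0$ together with \cref{thm:kappa} and \cref{thm:kappa-stability}. I would prove the four assertions in the order they are stated.

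\emph{Invariance under time-dependent shifts.} Apply \eqref{eq:kappa-shift} to both $\kappa_k$ and $\kappa_0$. Each is shifted by the same quantity $-\int_0^1 r(t)\,dt$, so the shift cancels in the difference and $w_k(H+r)=w_k(H)$.

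\emph{The chain of inequalities.} The monotone chain $0=w_0\le w_1\le\cdots\le w_L$ comes from subtracting the common number $\kappa_0(H)$ from the hierarchy in part (1) of \cref{thm:kappa}. The identification of $w_L$ is the endpoint formula \eqref{eq:endpoints}: $\kappa_L(H)-\kappa_0(H)=c(1;H)-c([M];H)$. The lower bound $w_k(H)\ge k\hbar(H)$ for $k\ge1$ is a rearrangement of \eqref{eq:gap-kappa}.

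\emph{The Lipschitz estimate.} This is the one step that needs care, because the symmetric bound $|\kappa_k(H)-\kappa_k(K)|\le\delta(H,K)$ only gives $2\delta(H,K)$ after subtraction, which is too weak. Instead I would use the one-sided estimate \eqref{eq:kappa-one-sided} in both directions and pair them crosswise. Write
\[
 w_k(K)-w_k(H)=\bigl(\kappa_k(K)-\kappa_k(H)\bigr)+\bigl(\kappa_0(H)-\kappa_0(K)\bigr).
\]
The first bracket is at most $\Delta^+(H,K)$, by \eqref{eq:kappa-one-sided} with $k$. The second bracket is at most $\Delta^+(K,H)$, by \eqref{eq:kappa-one-sided} applied with the roles of $H$ and $K$ exchanged and with index $0$. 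Hence $w_k(K)-w_k(H)\le\Delta^+(H,K)+\Delta^+(K,H)$, and the same estimate holds with $H$ and $K$ interchanged. Finally, the identity $\Delta^+(H,K)+\Delta^+(K,H)=\normHofer{H-K}$ holds for arbitrary Hamiltonians; the computation in the proof of \cref{thm:universal-stability} does not use normalization for this sum. This gives \eqref{eq:width-Lipschitz}.

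If degenerate Hamiltonians are allowed, the Lipschitz estimate extends to them by approximating with non-degenerate ones.
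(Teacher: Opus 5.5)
Your proposal is correct and follows the paper's proof essentially verbatim. The paper likewise reads off the shift invariance, the monotone chain, the endpoint identity and the bound $w_k(H)\ge k\hbar(H)$ from \cref{thm:kappa,thm:kappa-stability}, and it obtains \eqref{eq:width-Lipschitz} by the same crosswise pairing: $\kappa_k(K)\le\kappa_k(H)+\Delta^+(H,K)$ together with the reversed estimate $\kappa_0(K)\ge\kappa_0(H)-\Delta^+(K,H)$, then exchanging $H$ and $K$. You also make explicit two points the paper leaves implicit, and both are correct: the symmetric bound alone would only give $2\delta(H,K)$, and the identity $\Delta^+(H,K)+\Delta^+(K,H)=\normHofer{H-K}$ does not need normalization.
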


\begin{proof}
All statements except \eqref{eq:width-Lipschitz} follow from
\cref{thm:kappa,thm:kappa-stability}.  For the estimate, combine
\[
 \kappa_k(K)\le\kappa_k(H)+\Delta^+(H,K)
\]
with the reverse one-sided estimate for $\kappa_0$,
\[
 \kappa_0(K)\ge\kappa_0(H)-\Delta^+(K,H).
\]
This gives
$w_k(K)-w_k(H)\le\Delta^+(H,K)+\Delta^+(K,H)$.
Exchange $H,K$.  The equality with the Hofer norm of $H-K$ is immediate
from the definitions.
\end{proof}

\begin{corollary}[Action-spectrum diameter]
\label{cor:action-spectrum-diameter}
Let \(H\) be a normalized non-degenerate Hamiltonian and let
\[
L=\operatorname{cl}\bigl(H^*(M;\mathbb K)\bigr).
\]
Then
\[
\max\operatorname{Spec}(H)-\min\operatorname{Spec}(H)
\geq
c(1;H)-c([M];H)
=
w_L(H)
\geq
L\hbar(H).
\]
\end{corollary}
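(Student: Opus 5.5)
The statement to prove is Corollary~\ref{cor:action-spectrum-diameter}. It reduces almost entirely to \cref{thm:kappa} and \cref{cor:widths}, so the plan is to chain three inequalities.

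\textbf{First inequality.} I will show
\[
\max\Spec(H)-\min\Spec(H)\ge c(1;H)-c([M];H).
\]
By the spectrality statement in \cref{thm:kappa}, both $\kappa_L(H)$ and $\kappa_0(H)$ lie in $\Spec(H)$. By the endpoint formula \eqref{eq:endpoints}, these are $c(1;H)$ and $c([M];H)$ respectively. Hence $c(1;H)\le\max\Spec(H)$ and $c([M];H)\ge\min\Spec(H)$, and subtracting gives the claim. The action spectrum is finite and nonempty because $H$ is non-degenerate and Floer homology is nonzero, so the maximum and minimum exist.

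\textbf{Middle equality and last inequality.} The identity $c(1;H)-c([M];H)=w_L(H)$ is the definition of $w_L$ combined with \eqref{eq:endpoints}; it is also recorded in \cref{cor:widths}. For $w_L(H)\ge L\hbar(H)$, I will apply \eqref{eq:gap-kappa} with $k=L$. This gives $\kappa_L(H)\ge\kappa_0(H)+L\hbar(H)$ whenever $L\ge1$.

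If $L=0$, the inequality reads $w_0=0\ge0$ and is trivial.

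\textbf{The one delicate point: $\hbar(H)$.} The quantity $\hbar(H)$ in \eqref{eq:hbar} is a minimum over a set that could be empty. I would handle this as follows.
\begin{itemize}
\item If $L\ge1$, the regular module $HF_*(H)\cong A$ contains a nonzero cap word of length $L$, using $\mcl_A(A_A)=\cl(A)$. By the strict action drop \eqref{eq:strict-action}, this forces at least two distinct action values, so the set in \eqref{eq:hbar} is nonempty and $\hbar(H)>0$ is well defined. The remark following \cref{thm:filtered-cap} makes exactly this point.
\item With $\hbar(H)$ defined, \eqref{eq:gap-kappa} applies, which completes the chain.
\end{itemize}
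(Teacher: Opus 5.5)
Your proof is correct and follows the paper's argument. Spectrality of $\kappa_0(H)$ and $\kappa_L(H)$, combined with the endpoint formula, gives the first inequality. The identity $c(1;H)-c([M];H)=w_L(H)$ and the gap estimate $w_L(H)\ge L\hbar(H)$ (which the paper cites via the widths corollary) give the rest. Your check that $\hbar(H)$ is well defined when $L\ge1$ is a harmless addition; the paper makes this point only in the remark following the filtered cap-action proposition.
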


\begin{proof}
By spectrality,
\[
c(1;H),c([M];H)\in\operatorname{Spec}(H).
\]
Therefore
\[
\max\operatorname{Spec}(H)-\min\operatorname{Spec}(H)
\geq c(1;H)-c([M];H).
\]
The identity
\[
c(1;H)-c([M];H)=w_L(H)
\]
and the inequality
\[
w_L(H)\geq L\hbar(H)
\]
follow from \cref{cor:widths}.
\end{proof}

\subsection{Essential-bar interpretation}
For fixed $\ell,q$, the direct limit of the cap layer is
$(\CapLayer^\ell(HF_*(H)))_q$.  A barcode interval is called \textit{essential} when it is
unbounded above.

\begin{proposition}
\label{prop:essential-bars}
For $\ell\ge1$,
\[
 \kappa_\ell(H)=
 \min\{b(I)\mid I\in\bigsqcup_q\Bcap_{\ell,q}(H),\ I
 \text{ is essential}\}.
\]
For $\ell=0$, the same formula applied to the ordinary Floer barcode gives
$\kappa_0(H)$.
\end{proposition}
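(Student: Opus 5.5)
The plan is to identify essential bars of the fixed-layer modules $\Psi_{\mideal}^{\ell,q}(\cV^H)$ with the images $\CapLayer^\ell(\im j_a)_q$ in full Floer homology, and then read off $\kappa_\ell$ from \cref{prop:kappa-PCL}. For $a$ above $\max\Spec(H)$ the maps $i_a^b$ are isomorphisms onto $HF_*(H)$, so the module stabilizes and its direct limit is $(\CapLayer^\ell(HF_*(H)))_q$. In an interval decomposition of a constructible module with finite-dimensional stable value, the structure map $V_a\to V_\infty$ kills exactly the finite bars. Hence the image of $(\CapLayer^\ell(V_a^H))_q\to(\CapLayer^\ell(HF_*(H)))_q$ has dimension equal to the number of essential bars $I$ with $a\in I$. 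The essential bars have the form $[b,\infty)$ under the $(-\infty,a]$ convention; the endpoint decoration is irrelevant to the numerical value $b(I)$.

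Next, by \cref{lem:layer-functor} applied to $f=j_a$, this image equals $\CapLayer^\ell(\im j_a)_q$. Therefore the image is nonzero for some $q$ if and only if $a\ge b(I)$ for some essential bar $I\in\bigsqcup_q\Bcap_{\ell,q}(H)$. On the other hand, $\CapLayer^\ell(\im j_a)\ne0$ is precisely the condition $\mcl_A(\im j_a)\ge\ell$. Since the layers form a descending flag, nonvanishing of the $\ell$-th layer is equivalent to cap-length at least $\ell$.

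Combining these with \cref{prop:kappa-PCL}, $\kappa_\ell(H)=\inf\{a\mid \mcl_A(\im j_a)\ge\ell\}=\min b(I)$ over essential bars. The infimum is attained because there are finitely many bars, all with left endpoints in $\Spec(H)$. One also checks that the set of essential bars is nonempty. Since $\ell\le L$, the regular module has a nonzero $\ell$-th layer, so the direct limit $\CapLayer^\ell(HF_*(H))\neq 0$ via PSS. For $\ell=0$ the same argument applies verbatim to the ordinary Floer barcode, using the $k=0$ case of \cref{prop:kappa-PCL}.

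The main obstacle is the first step: justifying carefully that the image in the direct limit counts exactly the essential bars alive at $a$. This requires that the structure maps to the limit factor through interval summands compatibly with restriction to the sub-persistence module $\Psi^{\ell,q}$. I would handle it by noting that $\Psi^{\ell,q}$ is itself a constructible persistence module whose limit is the stated subspace, and then applying Crawley--Boevey's decomposition directly to it.
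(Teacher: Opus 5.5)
Your proposal is correct and follows essentially the same route as the paper: \cref{lem:layer-functor} identifies the image of $\CapLayer^\ell(V_a^H)\to\CapLayer^\ell(HF_*(H))$ with $\CapLayer^\ell(\im j_a)$. The interval decomposition then shows this image is nonzero exactly once an essential bar has been born, and \cref{prop:kappa-PCL} converts this into $\kappa_\ell(H)$. Your additional checks fill in details the paper leaves implicit, namely stabilization above $\max\Spec(H)$, attainment of the minimum, and nonemptiness of the essential barcode for $\ell\le L$ via PSS.
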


\begin{proof}
By \cref{lem:layer-functor},
\[
 \im\bigl(\CapLayer^\ell(V_a^H)\to \CapLayer^\ell(HF_*(H))\bigr)
 =\CapLayer^\ell(\im j_a).
\]
This space is nonzero exactly when $\mcl_A(\im j_a)\ge\ell$.  In an interval
decomposition, the map to the direct limit is nonzero exactly when an
essential bar has been born.  Apply \cref{prop:kappa-PCL}.
\end{proof}

\section{A complete Morse computation on tori}
\label{sec:torus}

Let $M=T^m$ with $m=2n$ and any translation-invariant symplectic form.  Its
cohomology algebra is
\[
 A=\Lambda_\K(e_1,\ldots,e_m),\qquad \deg e_i=1,
\]
so $L=m$.  Let $h:S^1\to[0,1]$ be a perfect Morse function with one minimum
and one maximum.  Choose weights
\[
 0<a_1\le a_2\le\cdots\le a_m
\]
and put
\[
 f(\theta_1,\ldots,\theta_m)=\sum_{i=1}^m a_i h(\theta_i),
 \qquad
 H=\eps(f-\bar f),
\]
where $\bar f$ is the mean value and $\eps>0$ is sufficiently small.
Then $H$ is non-degenerate and has no nonconstant contractible one-periodic
orbits.

For a subset $S\subseteq\{1,\ldots,m\}$, let
$e_S=\smilecup_{i\in S}e_i$.  The corresponding critical point is the maximum
in the coordinates indexed by $S$ and the minimum in the remaining
coordinates.

\begin{theorem}[Explicit torus formula]
\label{thm:torus}
For $0\le k\le m$,
\begin{equation}
\label{eq:torus-kappa}
 \kappa_k(H)=
 \eps\bar f-\eps\sum_{i=k+1}^{m}a_i
\end{equation}
provided that $ \eps>0$ is sufficiently small. 
Hence, 
\begin{equation}
\label{eq:torus-width}
 w_k(H)=\eps\sum_{i=1}^{k}a_i.
\end{equation}
Moreover, because the perfect Morse differential vanishes, all structure
maps of the filtered Floer module are injective and
\begin{equation}
\label{eq:torus-PCL}
 \PCL_H([a,b])=
 \max\left\{m-|S|\ \middle|\
 \eps\bar f-\eps\sum_{i\in S}a_i\le a\right\}
\end{equation}
for every $a\le b$, with value zero if the set is empty.
\end{theorem}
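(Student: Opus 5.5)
The plan is to reduce everything to the explicit Morse complex of $f$ and to the monomial basis $\{e_S\}$ of $A=\Lambda_\K(e_1,\ldots,e_m)$. The main point is to show that the filtered Floer module of $H$ is the span of monomials, each sitting at an explicit action level.

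\emph{Step 1 (Morse--Floer identification).} For $\eps$ small the only contractible $1$-periodic orbits are the $2^m$ critical points $p_S$ of $f$, where $p_S$ is the maximum in the coordinates indexed by $S$. By \eqref{eq:action},
\[
 \cA_H(p_S)=-\eps(f(p_S)-\bar f)=\eps\bar f-\eps\sum_{i\in S}a_i .
\]
Since $h$ is perfect, $f$ is a perfect Morse function on $T^m$, so the Morse (hence Floer) differential vanishes. Therefore
\[
 V_a^H=\operatorname{span}\{p_S\mid \cA_H(p_S)\le a\},
\]
and all structure maps $i_a^b$ and $j_a$ are injective. For the grading I would use the Floer/Morse convention for $-f$, which matches the action ordering. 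The critical point $p_S$ then has Morse index $m-|S|$ and its unstable/stable cell is the coordinate subtorus complementary to $S$. Under PSS, $[p_S]=\Phi_H(e_S)$ up to sign.

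\emph{Step 2 (cap action on the monomial basis).} The module law \eqref{eq:PSS-module} gives $\Phi_H(e_S)\frownH e_i=\pm\Phi_H(e_{S\cup\{i\}})$ if $i\notin S$, and $0$ otherwise. Combined with Step 1, each $V_a^H$ is the $A$-submodule of $A_A$ spanned by the monomials $e_S$ with $\eps\bar f-\eps\sum_{i\in S}a_i\le a$. This set is closed under enlarging $S$, consistent with \eqref{eq:strict-filtered-cap}. This identification is the step I expect to be the main obstacle, because it needs the Floer basis element $p_S$ to be exactly the PSS image of the monomial at chain level, not merely up to lower-action corrections. I would argue it via the Morse--Floer/PSS identification for $C^2$-small Hamiltonians, together with the fact that in a perfect complex the filtered homology has a canonical basis of critical points. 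The cap with $e_i$ is the intersection of unstable subtori with a codimension-one subtorus, and that intersection is again a coordinate subtorus.

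\emph{Step 3 (formulas).} A class $x=\sum c_S e_S$ has spectral level $\rho_H(x)=\max_{c_S\ne0}\cA_H(p_S)$, because the complex is perfect and filtered by the basis. Its cap-depth is at least $k$ iff some $S$ with $c_S\ne0$ has $|S|\le m-k$. Indeed, if $S$ has minimal size among the monomials of $x$, capping by the $k$ generators $e_i$ with $i$ outside $S$ kills every other monomial of $x$ and leaves $\pm e_{S\cup\cdots}\ne0$. Hence $\kappa_k$ is attained by a single monomial $e_S$ with $|S|\le m-k$ that minimizes the action, i.e.\ maximizes $\sum_{i\in S}a_i$. Because the weights are ordered, the minimizer is $S=\{k+1,\ldots,m\}$, which yields \eqref{eq:torus-kappa}. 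Formula \eqref{eq:torus-width} follows by subtracting $\kappa_0$, which corresponds to $S=\{1,\ldots,m\}$.

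For \eqref{eq:torus-PCL}, injectivity gives $\im i_a^b\cong V_a^H$ as $A$-modules. For a submodule spanned by monomials, $\CapLayer^\ell\ne0$ iff it contains some $e_S$ with $|S|\le m-\ell$, by the same capping argument. Therefore $\mcl_A$ equals $\max\{m-|S|\}$ over the $S$ admitted at level $a$, and it is $0$ when no $S$ is admitted.
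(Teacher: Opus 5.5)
Your proposal is correct and is essentially the paper's own proof: Morse--Floer identification with vanishing differential, $[p_S]=\pm\Phi_H(e_S)$, cap-depth of $e_S$ equal to $m-|S|$, reduction of $\kappa_k$ and $\PCL_H$ to single monomials via the max-action formula for $\rho_H$, and the greedy choice $S=\{k+1,\ldots,m\}$; the paper simply asserts the chain-level PSS identification you flag, and you could even avoid it, since by Poincar\'e duality in $\Lambda_\K(e_1,\ldots,e_m)$ every nonzero homogeneous class of degree $d$ has cap-depth exactly $m-d$, so only the degrees and actions of the $p_S$ matter. One small slip: capping a minimal $S$ by just $k$ generators outside $S$ need not kill the other monomials (e.g.\ $(e_1+e_2)\smilecup e_3$ for $m=4$), so either cap by all $m-|S|$ generators outside $S$, or note that the surviving products $e_{S'}\smilecup e_T$ are, up to sign, pairwise distinct monomials and hence their sum is nonzero.
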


\begin{proof}
For $\eps$ sufficiently small, the Floer complex is canonically identified
with the Morse cochain complex, with action
\[
 \cA_H(p_S)=-H(p_S)
 =\eps\bar f-\eps\sum_{i\in S}a_i.
\]
The product Morse data are perfect: the Morse differential on each circle
vanishes and therefore the product differential vanishes.  Under PSS, the
class of $p_S$ is $e_S$.

The cap-depth of $e_S$ is $m-|S|$.  Indeed, multiplication by the generators
$e_i$ with $i\notin S$ gives a nonzero word of length $m-|S|$.  No longer
word is possible because every positive-degree factor raises total degree by
at least one and $A$ vanishes above degree $m$.

The condition $\depth(e_S)\ge k$ is equivalent to $|S|\le m-k$.
For a linear combination of monomials, depth at least $k$ forces at least
one monomial summand to have depth at least $k$, while its spectral level is
the maximum action among its nonzero monomial summands.  Hence the minimum
in the definition of $\kappa_k$ is attained by a single monomial.  To
minimize its action, one maximizes $\sum_{i\in S}a_i$ under the cardinality
constraint.  Since the weights are ordered, the maximum is attained by
$S=\{k+1,\ldots,m\}$, proving \eqref{eq:torus-kappa}.  Subtracting the case
$k=0$ gives \eqref{eq:torus-width}.

Finally, the filtered group $V_a^H$ is spanned by those $e_S$ whose action is
at most $a$, and the structure maps are inclusions because the differential
vanishes.  Hence $\im i_a^b=V_a^H$.  Its module cap-length is the maximum of
$m-|S|$ among its basis classes, which is \eqref{eq:torus-PCL}.
\end{proof}

\begin{remark}
The formula shows that the hierarchy is genuinely multilevel.  The endpoint
width $w_m=\eps\sum_i a_i$ is Schwarz's spectral norm, while the successive
increments recover the ordered weights $\eps a_1,\ldots,\eps a_m$.
A single spectral norm cannot retain this information.
\end{remark}

\begin{remark}[Intermediate widths are not norms]
\label{rem:intermediate-widths-not-norms}
The intermediate widths need not be non-degenerate.  For example,
on the standard two-torus let
\[
H_\delta(q,p)
=
\varepsilon
\left(
\delta h(q)+h(p)-(1+\delta)\bar h
\right),
\qquad \delta>0.
\]
Here $h:S^1\to\R$ is the perfect Morse function used as above. 
By \cref{thm:torus},
\[
w_1(H_\delta)=\varepsilon\delta.
\]
As \(\delta\to0\), the Hamiltonians converge in the Hofer norm to
\[
H_0(q,p)=\varepsilon\bigl(h(p)-\bar h\bigr).
\]
The Hofer continuity of \(w_1\) therefore gives
\[
w_1(H_0)=0.
\]
Nevertheless,
\[
\phi_{H_0}^1(q,p)
=
\bigl(q-\varepsilon h'(p),p\bigr)
\neq\id
\]
for nonconstant \(h\) and sufficiently small \(\varepsilon>0\).
Thus \(w_1\) is degenerate on
\(\Ham(T^2,\omega)\).

More generally, by allowing some of the weights in
\cref{thm:torus} to tend to zero, one obtains nontrivial
Hamiltonian diffeomorphisms for which several intermediate widths
vanish.  Hence only the endpoint
\[
w_L=c(1;\,\cdot\,)-c([M];\,\cdot\,)
\]
is asserted to be Schwarz's spectral norm; no norm property is
claimed for \(w_k\) when \(0<k<L\).
\end{remark}

\section{Transient cap modules and refined boundary depths}
\label{sec:transient}

Let
\[
 T_a^H:=\ker(j_a:V_a^H\to HF_*(H)).
\]
Because $j_a$ and all structure maps are $A$-linear, $T^H$ is a persistence
submodule in graded $A$-modules.  Its bars are precisely the finite bars of
the ordinary filtered Floer module.

For $\ell\ge0$ and degree $q$, define
\[
 \mathcal B_{\ell,q}^{\mathrm{cap,tr}}(H):=
 \barc\bigl(a\mapsto(\CapLayer^\ell(T_a^H))_q\bigr).
\]
Every bar in this barcode is finite.

\begin{definition}
Define the depth-$\ell$, degree-$q$ cap-boundary depth by
\begin{equation}
\label{eq:beta-lq}
 \beta_{\ell,q}^{\mathrm{cap}}(H):=
 \sup\{\operatorname{length}(I)\mid
 I\in\mathcal B_{\ell,q}^{\mathrm{cap,tr}}(H)\},
\end{equation}
with supremum zero for the empty barcode.  Put
\[
 \beta_\ell^{\mathrm{cap}}(H):=
 \sup_q\beta_{\ell,q}^{\mathrm{cap}}(H).
\]
\end{definition}

At depth zero, $\beta_0^{\mathrm{cap}}$ is the length of the longest finite
bar of filtered Floer homology, hence agrees with boundary depth under the
standard barcode interpretation of Floer-type complexes
\cite{Usher,UsherZhang}.

\begin{proposition}[Interval characterization]
\label{prop:beta-char}
For $\ell\ge1$,
\[
 \beta_\ell^{\mathrm{cap}}(H)=
 \sup\{b-a\mid
 \mcl_A(\im(T_a^H\to T_b^H))\ge\ell\}.
\]
For $\ell=0$, replace the condition by
$\im(T_a^H\to T_b^H)\ne0$.
\end{proposition}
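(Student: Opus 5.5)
The proof is a direct transcription of the scalar recovery argument of \cref{thm:barcode}, applied to the transient persistence module $T^H$ instead of $\cV^H$. The three ingredients are that $T^H$ is a constructible persistence module in $\mathrm{GrMod}_A$, that its cap layers commute with images, and the interval decomposition.

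\emph{Step 1: setup.} The module $T^H$ is a persistence module in graded right $A$-modules: each $T_a^H$ is a graded $A$-submodule of $V_a^H$, since $j_a$ is $A$-linear and degree preserving. Its structure maps $t_a^b$ are the restrictions of $i_a^b$. It is constructible and pointwise finite-dimensional, with critical values in $\Spec(H)$, because $V^H$ is.

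\emph{Step 2: layers commute with images.} Apply \cref{lem:layer-functor} to $t_a^b$:
\[
 \im\bigl(\CapLayer^\ell(T_a^H)\to\CapLayer^\ell(T_b^H)\bigr)=\CapLayer^\ell(\im t_a^b).
\]
Hence the right-hand side is nonzero if and only if $\mcl_A(\im t_a^b)\ge\ell$, by the definition \eqref{eq:mcl}, which is the same as \eqref{eq:recover} for $T^H$. Since everything is graded, this is nonzero if and only if, for some $q$, the degree-$q$ structure map of $a\mapsto(\CapLayer^\ell(T_a^H))_q$ from $a$ to $b$ is nonzero.

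\emph{Step 3: interval decomposition.} Decompose this module as a direct sum of interval modules $\K_K$ with $K\in\mathcal B^{\mathrm{cap,tr}}_{\ell,q}(H)$. As in \eqref{eq:barcode-proof-recovery-3}, the structure map from $a$ to $b$ is nonzero exactly when some bar $K$ contains $[a,b]$. Therefore
\[
 \{b-a\mid \mcl_A(\im t_a^b)\ge\ell\}
 =\{b-a\mid [a,b]\subseteq K \text{ for some } K\in\textstyle\bigsqcup_q\mathcal B^{\mathrm{cap,tr}}_{\ell,q}(H)\}.
\]
All bars are finite, and only finitely many occur, since the layers are subquotients of finite-dimensional spaces with finitely many critical values. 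So the supremum of the right-hand set over closed intervals $[a,b]\subseteq K$ equals $\sup_K \operatorname{length}(K)$, which is $\beta_\ell^{\mathrm{cap}}(H)$. The empty case gives $0$ on both sides. For $\ell=0$, the layer is $T^H$ itself, and the condition reads $\im t_a^b\ne0$; the same argument applies.

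\emph{Main obstacle: endpoint decorations.} A bar $K$ that is open at an end contains closed intervals $[a,b]$ only of length strictly less than its length. This is why the statement must be a supremum rather than a maximum. I would handle it by approximating from inside: take $[a+\epsilon',b-\epsilon']\subseteq K$ and let $\epsilon'\to0$. With the $(-\infty,a]$ convention, the bars are half-open of the form $[\cdot,\cdot)$, so the closed-interval lengths approach $\operatorname{length}(K)$ but need not attain it. I would also note that $\ell=0$ and $\ell\ge1$ are treated uniformly, because $\CapLayer^0=\id$.
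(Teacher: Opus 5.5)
Your proposal is correct and follows essentially the same route as the paper: \cref{lem:layer-functor} identifies the layer transition image with $\CapLayer^\ell(\im(T_a^H\to T_b^H))$, the interval decomposition shows this is nonzero exactly when some bar contains $[a,b]$, and the supremum of $b-a$ over such closed subintervals recovers the longest bar length. Your checks that $T^H$ is a constructible persistence submodule and your remarks on half-open endpoints make explicit what the paper's three-line proof leaves implicit.
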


\begin{proof}
By \cref{lem:layer-functor}, the transition image in the depth-$\ell$ layer
is nonzero exactly when
$\CapLayer^\ell(\im(T_a^H\to T_b^H))\ne0$, which is equivalent to module
cap-length at least $\ell$.  A transition in an interval module is nonzero
exactly when the corresponding bar contains $[a,b]$.  Taking the supremum of
$b-a$ gives the longest bar length.  The depth-zero argument is identical.
\end{proof}

\begin{theorem}[Stability of refined boundary depths]
\label{thm:beta}
For every $\ell,q$,
\[
 \left|\beta_{\ell,q}^{\mathrm{cap}}(H)
 -\beta_{\ell,q}^{\mathrm{cap}}(K)\right|
 \le2\delta(H,K),
\]
and the same estimate holds for $\beta_\ell^{\mathrm{cap}}$.
\end{theorem}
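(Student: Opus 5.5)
The plan is to reduce the estimate to the barcode stability already established in \cref{thm:barcode}, applied to the transient submodules rather than the full filtered modules. First I assume $H,K$ are non-degenerate and set $\eps:=\delta(H,K)$. Let $F_a\colon V_a^H\to V_{a+\eps}^K$ and $G_a\colon V_a^K\to V_{a+\eps}^H$ be the padded continuation maps from \cref{thm:continuation}; they form an $A$-linear $\eps$-interleaving. The key claim is that they restrict to the transient submodules:
\[
F_a(T_a^H)\subseteq T_{a+\eps}^K,\qquad G_a(T_a^K)\subseteq T_{a+\eps}^H.
\]
For this I will use the compatibility
\[
j^K_{a+\eps}\circ F_a=C_{H,K}\circ j^H_a,
\]
where $C_{H,K}\colon HF_*(H)\to HF_*(K)$ is the unfiltered continuation map. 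This identity holds because the filtered and unfiltered continuation maps are induced by the same chain map, followed by inclusions and structure maps. Since $C_{H,K}$ is an isomorphism, and in particular well defined on full homology, any $x\in\ker j_a^H$ is sent into $\ker j_{a+\eps}^K$. I expect this compatibility to be the main point that needs care: one must check that padding by structure maps does not spoil the commutation, which holds because $j^K_{b}\circ i^{b}_{a}=j^K_a$.

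Once the restriction holds, the naturality and interleaving identities for $F,G$ restrict verbatim, since the structure maps of $T$ are restrictions of those of $V$. Therefore $T^H$ and $T^K$ are $A$-linearly $\eps$-interleaved. They are constructible and pointwise finite-dimensional as submodules of constructible modules with the same critical values. Applying \cref{thm:barcode} to $\cV=T^H$ and $\cW=T^K$ with $J=\mideal$ gives
\[
d_{\mathrm B}\bigl(\mathcal B^{\mathrm{cap,tr}}_{\ell,q}(H),\mathcal B^{\mathrm{cap,tr}}_{\ell,q}(K)\bigr)\le\eps
\]
for every $\ell\ge0$ and every $q$.

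Next I convert this into an estimate on the longest bar. All bars here are finite. Take a partial matching $\mu$ with error $\le\eps'$ for any $\eps'>\eps$, and let $I$ be a bar of $H$ of length $\lambda$. There are two cases.
\begin{itemize}
\item If $I$ is matched to a bar $I'$, both endpoints move by at most $\eps'$, so $\operatorname{length}(I')\ge\lambda-2\eps'$.
\item If $I$ is unmatched, then $\lambda/2\le\eps'$, so $\lambda\le2\eps'$.
\end{itemize}
In either case $\beta^{\mathrm{cap}}_{\ell,q}(K)\ge\lambda-2\eps'$. Taking the supremum over $I$, letting $\eps'\downarrow\eps$, and then exchanging $H$ and $K$ gives
\[
\bigl|\beta^{\mathrm{cap}}_{\ell,q}(H)-\beta^{\mathrm{cap}}_{\ell,q}(K)\bigr|\le2\delta(H,K).
\]
The same argument also covers the case of empty barcodes, where the supremum is zero.

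For $\beta_\ell^{\mathrm{cap}}$, a supremum over $q$ of quantities that pairwise differ by at most $2\delta$ again differ by at most $2\delta$. Finally, for degenerate Hamiltonians I would argue as in \cref{thm:universal-stability}, using small non-degenerate perturbations together with the just-proved estimate. The triangle inequality, with the perturbation size tending to zero, then passes the bound to the limit.
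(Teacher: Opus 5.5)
Your proposal is correct and follows essentially the same route as the paper: the continuation square restricts the $A$-linear $\delta(H,K)$-interleaving to the transient submodules $T^H,T^K$, \cref{thm:barcode} bounds the bottleneck distance of every transient cap layer by $\delta(H,K)$, and the matched/unmatched case analysis turns this into the $2\delta(H,K)$ bound on longest bars, followed by the supremum over $q$. Your extra details (the $\eps'>\eps$ device and the padding compatibility) are fine. The one loose phrase is the constructibility justification: a submodule of a constructible module need not share its critical values in general, but for $T_a=\ker j_a$ this does hold because $j_b\circ i_a^b=j_a$. In any case, pointwise finite-dimensionality already suffices for the stability theorem.
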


\begin{proof}
The continuation square
\[
\begin{CD}
 V_a^H @>{C_{H,K}}>> V_{a+\delta}^K\\
 @V{j_a^H}VV @VV{j_{a+\delta}^K}V\\
 HF_*(H) @>{C_{H,K}}>> HF_*(K)
\end{CD}
\]
shows that continuation sends $T_a^H$ to $T_{a+\delta}^K$.  The opposite
continuation does the reverse, and the interleaving identities restrict to
the kernels.  Since continuation is $A$-linear, every transient cap layer is
$\delta$-interleaved.  Thus the corresponding barcodes have bottleneck
distance at most $\delta$.

Under a bottleneck matching of cost $\delta$, a finite bar matched to a
finite bar changes each endpoint by at most $\delta$, so its length changes
by at most $2\delta$.  A bar matched to the diagonal has length at most
$2\delta$.  Therefore the longest finite bar lengths differ by at most
$2\delta$.  Taking the supremum over degrees preserves the estimate.
\end{proof}

\section{A cyclic extension and its present scope}
\label{sec:cyclic}

This section remains in the symplectically aspherical setting.  We fix a
nontrivial primitive free homotopy class $\alpha$ and impose the standard
$\alpha$-atoroidal condition on $\omega$ and $c_1$ as in~\cite{PolterovichShelukhin}.  The action and integer
grading are then single-valued.  Moreover, the full Floer homology in class
$\alpha$ vanishes: continuation to the zero Hamiltonian identifies it with a
complex having no one-periodic orbit in the nontrivial class.  Hence the
barcodes used below have no essential bars.

Fix a prime $p$.  Assume that $\operatorname{char}\K\ne p$, that $\K$ contains
all $p$-th roots of unity, and that for a primitive root $\zeta_p$ the equation
$x^p=\zeta_p^q$ has no solution in $\K$ whenever $p\nmid q$.  The cyclotomic
field $\mathbb Q(\zeta_p)$ is an example
\cite{PolterovichShelukhin,PolterovichShelukhinStojisavljevic}.

\subsection{The multiplicity-sensitive spread}
In this subsection we adopt the various notations and definitions from~\cite[Section~4.3]{PolterovichShelukhin}. 
 Let $(W,R)$ be a $\mathbb Z_p$-persistence module, meaning that $W$ is a persistence module with a $\Z_p$-action which is given by an automorphism $R:W\to W$
satisfying $R^p=\id$. For a $p$-th root $\zeta\neq 1$ of unity, let
$L_\zeta=\ker(R-\zeta\id)$ be its $\zeta$-eigenspace persistence module.  For
a barcode $B$ and an interval $I$, write $m(B,I)$ for the number of bars of
$B$ containing $I$, counted with multiplicity.  Define $\mu_p(B)$ as the
supremum of the $c\ge0$ for which there is an interval $I$ of length greater
than $4c$ such that
\[
 m(B,I)=m(B,I^{2c})\not\equiv0\pmod p.
\]
Set
\[
 \mu_p(W,R):=\max_{\substack{\zeta^p=1\\\zeta\ne1}}
 \mu_p\bigl(\barc(L_\zeta)\bigr).
\]
We use two algebraic facts from
\cite[Section~4.3]{PolterovichShelukhin}:
\begin{enumerate}
\item equivariantly $\eps$-interleaved $\mathbb Z_p$-modules have
$\mu_p$-values differing by at most $\eps$ (see~\cite[Proposition~4.21]{PolterovichShelukhin});
\item if $R=S^p$ for an automorphism $S$ of persistence modules, then
$\mu_p(W,R)=0$ (see~\cite[Proposition~4.19]{PolterovichShelukhin}).
\end{enumerate}

\subsection{Rotation on cap layers}
The marked-point cap construction extends to the Floer complex associated to $\alpha$-atoroidal class. 
It gives a filtration-nonincreasing right action of
$A=H^*(M;\K)$, and continuation and push-forward maps commute with this
action.  Indeed, the defining moduli spaces are the same spiked-cylinder or
pair-of-pants spaces used in the contractible case; the
$\alpha$-atoroidal condition makes the energy identity single-valued and
symplectic asphericity excludes sphere bubbling.  We use this standard
package in the form established in \cite[Section~4.4]{PolterovichShelukhin} (or see
\cite[Section~3.1]{PolterovichShelukhinStojisavljevic}).

Let $\phi\in\Ham(M,\omega)$ and consider the filtered
Floer persistence module in class $\alpha$ of the $p$-th iterate,
\[
 \mathbb V^\alpha(\phi^p)
 :=HF_*^{(-\infty,\bullet]}(\phi^p)_\alpha.
\]
Loop rotation on $\Lambda M$, induced by $t\to t+1/p$, defines an
automorphism
\[
 R_p:\mathbb V^\alpha(\phi^p)
 \longrightarrow\mathbb V^\alpha(\phi^p),
 \qquad R_p^p=\id.
\]
Equivalently, this operator is given by the push-forward  map (cf.~\cite[Definition~2.11]{PolterovichShelukhin}) induced by $\phi$ on the filtered Floer homology of $\phi^p$. As a consequence of Proposition~\ref{prop:push-forward-naturality}, we have

\begin{proposition}[Rotation preserves every cap layer]
\label{prop:rotation-cap}
For every homogeneous ideal $J\subseteq H^{>0}(M;\K)$ and every $\ell\ge0$,
\[
 R_p\bigl(\CapLayer_J^\ell(\mathbb V^\alpha(\phi^p))\bigr)
 =\CapLayer_J^\ell(\mathbb V^\alpha(\phi^p)).
\]
The same statement holds degree by degree.
\end{proposition}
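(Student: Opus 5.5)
The plan is to exhibit $R_p$ as the push-forward by $\phi$ and then apply \cref{prop:push-forward-naturality} in its atoroidal form (\cref{rem:push-forward-semilinear}), using that $\phi$ acts trivially on cohomology. By the identification stated just before the proposition, $R_p$ is the filtered push-forward $P_a(\phi)$ for the Hamiltonian generating $\phi^p$. Conjugating $\phi^p$ by $\phi$ returns $\phi^p$, so $\eta=\phi$ maps the Floer data of $\phi^p$ to Floer data of $\phi^p$ itself. Since $\phi$ is isotopic to the identity, $\phi_*\alpha=\alpha$, and \cref{rem:push-forward-semilinear} then gives a filtration- and degree-preserving automorphism of $\mathbb V^\alpha(\phi^p)$ that commutes with the structure maps. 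Up to continuation identifications, this automorphism is $R_p$.

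Next, apply the semilinearity formula \eqref{eq:push-forward-cap}:
\[
 R_p(x\frown u)=R_p(x)\frown(\phi^{-1})^*u .
\]
Because $\phi\in\Ham(M,\omega)$ is isotopic to the identity, $(\phi^{-1})^*=\id$ on $H^*(M;\K)$. Hence $R_p$ is $A$-linear, and $(\phi^{-1})^*J=J$ for every homogeneous ideal $J$. Then \eqref{eq:push-forward-layer}, or directly \cref{lem:layer-functor} applied to the $A$-linear map $R_p$, gives
\[
 R_p\bigl(\CapLayer_J^\ell(\mathbb V^\alpha(\phi^p))\bigr)\subseteq\CapLayer_J^\ell(\mathbb V^\alpha(\phi^p)).
\]
For equality, use that $R_p$ is surjective, since $R_p^p=\id$. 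By \cref{lem:layer-functor},
\[
 R_p\bigl(\CapLayer_J^\ell(V)\bigr)=\CapLayer_J^\ell(\im R_p)=\CapLayer_J^\ell(V),
\]
where $V=\mathbb V^\alpha(\phi^p)$. Alternatively, apply the inclusion to $R_p^{p-1}=R_p^{-1}$. The case $\ell=0$ is immediate. The degree-by-degree statement follows because $R_p$ preserves the Conley--Zehnder grading, so it maps the degree-$q$ part of each layer into the degree-$q$ part.

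The main obstacle is the first step: checking that the loop-rotation operator agrees, on filtered homology and compatibly with the cap-action, with the push-forward used in \cref{prop:push-forward-naturality}. The issue is that the Hamiltonian generating $\phi^p$ (the $p$-fold concatenation) is not literally $\eta$-conjugated to itself, but only after a time reparametrization. I would handle this as in \cite[Section~4.4]{PolterovichShelukhin}. First choose the $p$-fold iterate $H^{(p)}_t=pH_{pt}$ of a $1$-periodic $H$. Then $\phi_H$-push-forward coincides with the time shift $t\mapsto t+1/p$ on orbits and Floer cylinders, when the almost complex structure is chosen $1/p$-periodic, which is a generic enough choice in this setting. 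Under this choice the marked-point cap moduli spaces are carried to each other with the cycle $Z_u$ replaced by $\phi(Z_u)$. The two cycles are homologous, so the cap-actions agree on homology; this is exactly the semilinearity computation with trivial $(\phi^{-1})^*$.
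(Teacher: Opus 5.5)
Your proposal is correct and follows the paper's route: the paper identifies $R_p$ with the push-forward by $\phi$ on the filtered Floer homology of $\phi^p$. It then obtains the statement directly from \cref{prop:push-forward-naturality}, in its atoroidal form \cref{rem:push-forward-semilinear}, since $\phi\in\Ham(M,\omega)$ acts trivially on $H^*(M;\K)$ and hence $(\phi^{-1})^*J=J$. Your upgrade from inclusion to equality via bijectivity of $R_p$ and \cref{lem:layer-functor} is equivalent to the paper's ``apply the same argument to $\eta^{-1}$''; the rotation/push-forward identification you flag as the main obstacle is simply asserted in the paper, and for non-autonomous $H$ it holds only after the canonical identification of the two generating paths of $\phi^p$, not literally on Floer cylinders.
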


For each $J,\ell,q$, let
\[
 \mathfrak C_{J,\ell,q}^{p,\alpha}(\widetilde\phi)
 :=\left(
 a\longmapsto
 \bigl(\CapLayer_J^\ell(HF_*^{(-\infty,a]}(\widetilde\phi^p)_\alpha)\bigr)_q,
 R_p\right)
\]
be the resulting $\mathbb Z_p$-persistence module.  Define
\[
 \mu_{p,J,\ell,q}^{\alpha}(\widetilde\phi)
 :=\mu_p\bigl( \mathfrak C_{J,\ell,q}^{p,\alpha}(\widetilde\phi)\bigr)
\]
and
\begin{equation}
\label{eq:mu-cap}
 \mu_p^{\mathrm{cap},\alpha}(\widetilde\phi)
 :=\sup_{\substack{J\subseteq A^{>0}\text{ homogeneous ideal}\\
                    0\le\ell\le L,\ q\in\mathbb Z}}
 \mu_{p,J,\ell,q}^{\alpha}(\widetilde\phi).
\end{equation}
At depth $\ell=0$ this includes the ordinary cyclic persistence module.  If
$J=(u)$ and $\ell=1$, \cref{lem:principal} identifies the layer with the image
of the single cap operator by $u$.

\begin{theorem}[Cyclic stability]
\label{thm:cyclic-stability}
For all $\widetilde\phi,\widetilde\psi\in\widetilde{\Ham}(M,\omega)$,
\[
 \left|\mu_p^{\mathrm{cap},\alpha}(\widetilde\phi)
       -\mu_p^{\mathrm{cap},\alpha}(\widetilde\psi)\right|
 \le p\,\widetilde d_{\mathrm H}(\widetilde\phi,\widetilde\psi).
\]
\end{theorem}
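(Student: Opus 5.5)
The plan is to reduce the estimate to the two algebraic facts recalled from \cite[Section~4.3]{PolterovichShelukhin}, after constructing an equivariant interleaving between the $\mathbb Z_p$-persistence modules $\mathfrak C_{J,\ell,q}^{p,\alpha}(\widetilde\phi)$ and $\mathfrak C_{J,\ell,q}^{p,\alpha}(\widetilde\psi)$ for each fixed triple $(J,\ell,q)$.

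\textbf{Step 1: a rotation-equivariant, $A$-linear interleaving.} Choose normalized Hamiltonians $H$, $K$ generating $\widetilde\phi$, $\widetilde\psi$ with $\normHofer{H-K}$ as close as desired to $\widetilde d_{\mathrm H}(\widetilde\phi,\widetilde\psi)$. The $p$-th iterates are generated by the $p$-fold time-reparametrized Hamiltonians $H^{(p)}_t=pH_{pt}$ and $K^{(p)}_t=pK_{pt}$, and
\[
\normHofer{H^{(p)}-K^{(p)}}=p\normHofer{H-K}.
\]
Following \cite[Section~4.4]{PolterovichShelukhin}, the continuation maps between the class-$\alpha$ filtered Floer modules of $H^{(p)}$ and $K^{(p)}$ are built from $1/p$-periodic homotopies. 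For such homotopies the continuation commutes with loop rotation $R_p$, and they give an equivariant $p\,\delta(H,K)$-interleaving. As recalled in the rotation subsection, these continuation maps are also $A$-linear, by the marked-point argument of \cref{thm:continuation} transplanted to the atoroidal class.

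\textbf{Step 2: restriction to cap layers.} Since the interleaving maps are $A$-linear and degree-preserving, the argument of \cref{thm:abstract-stability} and Step~2 of \cref{thm:barcode} shows that they restrict to maps between the layers $(\CapLayer_J^\ell(\cdot))_q$. These restrictions still satisfy naturality and the interleaving identities. They still commute with $R_p$, and $R_p$ preserves each layer by \cref{prop:rotation-cap}. Hence the restrictions give an equivariant $p\,\delta(H,K)$-interleaving of the $\mathbb Z_p$-persistence modules $\mathfrak C_{J,\ell,q}^{p,\alpha}$.

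\textbf{Step 3: conclusion.} Fact (1) yields
\[
|\mu_{p,J,\ell,q}^\alpha(\widetilde\phi)-\mu_{p,J,\ell,q}^\alpha(\widetilde\psi)|\le p\,\delta(H,K)\le p\normHofer{H-K}.
\]
Taking the supremum over $(J,\ell,q)$ preserves the bound, because a uniform bound on the differences of the terms bounds the difference of the suprema. Then take the infimum over $H,K$. Degenerate cases follow by small perturbation and the continuity just established.

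\textbf{Main obstacle.} The hard point is Step~1: the continuation must be simultaneously equivariant under $R_p$ and compatible with the cap action. The cap operation uses a marked point at $(0,0)$ on the cylinder, while rotation moves the time coordinate, so it is not immediately clear that $R_p$ commutes with $\frown$. I would handle this through the identification of $R_p$ with push-forward by $\phi$, combined with \cref{prop:push-forward-naturality}. Since $\phi$ is Hamiltonian, $\phi^*=\id$ on cohomology, so push-forward is $A$-linear. The remaining check is that the $1/p$-periodic continuation data can be chosen generically while keeping the marked-point constraint. If that fails, I would compose with homotopies moving the marked point, as in \cref{thm:continuation}, which introduce no filtration shift.
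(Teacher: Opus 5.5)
Your proposal is correct and follows essentially the same route as the paper. You pass to the iterates $pH_{pt}$ and $pK_{pt}$, use rotation-equivariant, $A$-linear continuation maps (cited from Polterovich--Shelukhin) with shift at most $p\normHofer{H-K}$, and restrict them to each layer $(\CapLayer_J^\ell)_q$ via \cref{lem:layer-functor}; you then apply the Lipschitz property of $\mu_p$ and take the supremum over $(J,\ell,q)$ followed by the infimum over generating Hamiltonians. Your justification that $R_p$ preserves the layers (push-forward by the Hamiltonian $\phi$, which acts trivially on cohomology) is exactly the content of \cref{prop:rotation-cap}, and your sharper shift $p\,\delta(H,K)$ is a harmless refinement.
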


\begin{proof}
If normalized Hamiltonians $F,G$ generate the two lifts, then their $p$-th
iterates are generated by $F^{(p)}_t=pF_{pt}$ and $G^{(p)}_t=pG_{pt}$.
Continuation between these iterates has filtration shift at most
$p\normHofer{F-G}$.  The continuation maps are equivariant with respect to
loop rotation and $A$-linear with respect to the ordinary cap-action
\cite{PolterovichShelukhin,PolterovichShelukhinStojisavljevic}.  By
\cref{lem:layer-functor}, they restrict to every $J$-decorated cap layer.
Thus each pair
$ \mathfrak C_{J,\ell,q}^{p,\alpha}(\widetilde\phi)$ and
$ \mathfrak C_{J,\ell,q}^{p,\alpha}(\widetilde\psi)$ is equivariantly
$p\normHofer{F-G}$-interleaved.  The Lipschitz property of $\mu_p$ gives the
same estimate for every $J,\ell,q$.  Take the supremum and then the infimum
over generating Hamiltonians.
\end{proof}

\begin{theorem}[Distance to full powers]
\label{thm:cyclic-powers}
Let $\phi$ be the endpoint of $\widetilde\phi$.  Then
\begin{equation}
\label{eq:powers-lower}
 \distH\bigl(\phi,\Powers_p(M,\omega)\bigr)
 \ge\frac1p\,\mu_p^{\mathrm{cap},\alpha}(\widetilde\phi).
\end{equation}
\end{theorem}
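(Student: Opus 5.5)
The plan is to combine the cyclic stability estimate of \cref{thm:cyclic-stability} with the vanishing of $\mu_p$ on full $p$-th powers, following the pattern of \cite[Section~4.3]{PolterovichShelukhin}. Fix $\psi\in\Powers_p(M,\omega)$, so $\psi=\theta^p$ for some $\theta\in\Ham(M,\omega)$. We will show that $\mu_p^{\mathrm{cap},\alpha}(\widetilde\psi)=0$ for a suitable lift $\widetilde\psi$ of $\psi$. We then estimate $\mu_p^{\mathrm{cap},\alpha}(\widetilde\phi)\le p\,\widetilde d_{\mathrm H}(\widetilde\phi,\widetilde\psi)$. Finally, we take the infimum over $\psi$ and over lifts, using that $\distH(\phi,\psi)=\inf\widetilde d_{\mathrm H}(\widetilde\phi,\widetilde\psi)$ over lifts $\widetilde\psi$ of $\psi$. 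For this last step, either fix $\widetilde\phi$ and let $\widetilde\psi$ range over all lifts, or observe that $\mu_p^{\mathrm{cap},\alpha}$ depends only on the time-one map in the aspherical, $\alpha$-atoroidal setting.

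\emph{Step 1: the vanishing on powers.} Choose $\widetilde\theta$ a lift of $\theta$ and put $\widetilde\psi:=\widetilde\theta^p$. The Floer module of $\widetilde\psi^p=\widetilde\theta^{p^2}$ in class $\alpha$ carries the rotation $R_p$, which is the push-forward by $\psi=\theta^p$. Let $S$ be the push-forward by $\theta$, i.e.\ the rotation by $t\mapsto t+1/p^2$. Since $\theta$ commutes with $\theta^{p^2}$, push-forward by $\theta$ gives a filtration-preserving automorphism of $\mathbb V^\alpha(\psi^p)$. This uses \cref{rem:push-forward-semilinear}: $\theta$ is Hamiltonian, hence homotopic to the identity, so it fixes the free homotopy class $\alpha$. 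Functoriality of push-forward gives $S^p=R_p$.

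By \cref{prop:push-forward-naturality}, and because $\theta^*=\id$ on $H^*(M;\K)$, $S$ is $A$-linear and preserves every layer $\CapLayer_J^\ell$ degreewise, exactly as in \cref{prop:rotation-cap}. Hence on each $\mathfrak C^{p,\alpha}_{J,\ell,q}(\widetilde\psi)$ the automorphism $R_p$ is a $p$-th power of an automorphism of persistence modules. Fact (2), \cite[Proposition~4.19]{PolterovichShelukhin}, then gives $\mu_{p,J,\ell,q}^\alpha(\widetilde\psi)=0$ for every $J,\ell,q$, so the supremum vanishes.

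\emph{Step 2: combine.} By \cref{thm:cyclic-stability}, $\mu_p^{\mathrm{cap},\alpha}(\widetilde\phi)\le p\,\widetilde d_{\mathrm H}(\widetilde\phi,\widetilde\psi)$. The lift $\widetilde\psi$ above depends on the choice of $\widetilde\theta$, and lifts of $\psi$ differ by loops in $\Ham$. I expect this to be the main obstacle: we need the infimum of $\widetilde d_{\mathrm H}$ over the lifts of the form $\widetilde\theta^p$ to equal $\distH(\phi,\psi)$. The cleanest route I would try first uses the intrinsic-dependence discussion of \cref{sec:floer}. In the symplectically aspherical case the filtered Floer theory of normalized Hamiltonians descends to $\Ham(M,\omega)$, with vanishing action monodromy, and this holds in the class $\alpha$ under the atoroidal assumption. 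Consequently $\mu_p^{\mathrm{cap},\alpha}$ is independent of the lift, the estimate holds with $\distH(\phi,\psi)$ in place of $\widetilde d_{\mathrm H}$, and the statement is insensitive to which lift of $\psi$ is used.

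Taking the infimum over $\psi\in\Powers_p(M,\omega)$ then yields \eqref{eq:powers-lower}. If descent in class $\alpha$ is not available, I would instead argue directly with Hamiltonians. Given any Hamiltonian path $K$ to $\psi$, concatenate it with a path $H$ generating $\phi$. The resulting Hamiltonians differ in Hofer norm by $\normHofer{H-K}$, and running the Step 1 argument for the $p$-th iterate of $K$ would require only that its time-one map be a $p$-th power. This is exactly what Step 1 uses, via push-forward by $\theta$, which needs no lift.
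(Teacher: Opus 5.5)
Your proposal is correct and follows essentially the same route as the paper: you show that $\mu_p^{\mathrm{cap},\alpha}$ vanishes on full $p$-th powers because the push-forward $S$ by $\theta$ satisfies $S^p=R_p$ and preserves every cap layer (Fact~(2)), and then you combine \cref{thm:cyclic-stability} with an infimum over lifts and powers. Your explicit treatment of the lift issue through lift-independence (descent) in the class $\alpha$ spells out what the paper compresses into the phrase ``every lift relevant to the comparison with a full power.'' Your fallback in the last paragraph does not actually avoid that step: identifying the push-forward by $\theta$ of a general iterate $K^{(p)}$ with an automorphism of $\mathbb V^\alpha$ again requires the same descent.
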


\begin{proof}
Suppose first that $\psi=\theta^p$.  On the Floer persistence module of
$\psi^p=\theta^{p^2}$, push-forward by $\theta$ defines an automorphism $S$
whose $p$-th power is the rotation $R_p$ associated with $\psi$.  The same
naturality argument as in \cref{prop:rotation-cap} shows that $S$ preserves
every ideal-decorated cap layer.  Hence the restriction of $R_p$ to each cap
layer is a full $p$-th power and its multiplicity-sensitive spread vanishes.
Therefore
\[
 \mu_p^{\mathrm{cap},\alpha}(\widetilde\psi)=0
\]
for every lift relevant to the comparison with a full power.

Apply \cref{thm:cyclic-stability} to $\widetilde\phi$ and such a lift of
$\psi$:
\[
 \mu_p^{\mathrm{cap},\alpha}(\widetilde\phi)
 \le p\,\widetilde d_{\mathrm H}(\widetilde\phi,\widetilde\psi).
\]
Taking the infimum over lifts of full $p$-th powers gives
\eqref{eq:powers-lower}, exactly as in the operator argument of
\cite[Remark~3.1]{PolterovichShelukhinStojisavljevic}.
\end{proof}

\subsection{Formal strictness in the equivariant persistence category}

The depth-zero module and the depth-one principal-ideal modules are already
contained in
\cite{PolterovichShelukhin,PolterovichShelukhinStojisavljevic}.  We now show
that higher ideal layers cannot, even formally, be reconstructed from the
underlying cyclic module together with all single-operator image modules.

Let $\Sigma_g$ denote a closed oriented surface of genus $g$.  Set
\[
 M_0:=\Sigma_2\times\Sigma_3
\]
with a product area form.  This is a closed symplectically aspherical
manifold.  Choose
\[
 0\ne z\in H^1(\Sigma_2;\mathbb Q)
\]
and linearly independent classes
\[
 e_1,e_2,e_3\in H^1(\Sigma_3;\mathbb Q)
\]
spanning an isotropic subspace for the intersection form.  Via the K\"unneth
identification, the subspace
\[
 A_0:=\mathbb Q\cdot1\oplus\mathbb Q z
       \oplus E\oplus zE,
 \qquad E:=\operatorname{span}_{\mathbb Q}\{e_1,e_2,e_3\},
\]
is a graded subalgebra of $H^*(M_0;\mathbb Q)$, with
\[
 z^2=0,\qquad e_i\smilecup e_j=0,\qquad
 A_0^{>0}=\mathbb Qz\oplus E\oplus zE.
\]

\begin{theorem}[Formal strictness of higher cyclic cap layers]
\label{thm:formal-cyclic-strictness}
There exist a homogeneous ideal $J\subset A_0^{>0}$ and a constructible
$\mathbb Z_2$-persistence module $(\mathcal Q,R)$ in graded right
$A_0$-modules such that
\[
 \mu_2(\mathcal Q_q,R)=0
 \qquad\text{for every }q,
\]
and, for every homogeneous $u\in A_0^{>0}$,
\[
 \mu_2\bigl((\operatorname{im}m_u)_q,R\bigr)=0
 \qquad\text{for every }q,
\]
where $m_u(x)=x\frown u$, while
\[
 \mu_2\bigl((\CapLayer_J^2(\mathcal Q))_0,R\bigr)>0.
\]
Consequently, depth-two ideal-decorated cyclic persistence is strictly richer
than the underlying cyclic persistence module together with all principal
single-operator image modules, in the category of equivariant graded
persistence modules.
\end{theorem}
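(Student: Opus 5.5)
The plan is to build $(\mathcal Q,R)$ by hand as a \emph{constant-on-an-interval} module. We take a finite-dimensional graded right $A_0$-module $N$ and set $\mathcal Q=N\otimes\mathbb Q_{[0,T)}$, with all structure maps the identity on $[0,T)$. We put $R=-\id$, which commutes with the $A_0$-action, and we take $J=A_0^{>0}$.

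The point of this choice is that every eigenspace barcode in sight is then a multiset of copies of the single bar $[0,T)$. For such a barcode, $m(B,I)$ is either $0$ or the total multiplicity. Hence $\mu_2$ vanishes exactly when that multiplicity is even, and it is bounded below by roughly $T/4$ when the multiplicity is odd. The whole problem therefore reduces to linear algebra: we need a finite-dimensional module $N$ in which
\begin{itemize}
\item every graded piece $N_q$ has even dimension;
\item $(\operatorname{im} m_u)_q$ has even dimension for every homogeneous $u\in A_0^{>0}$ and every $q$;
\item $(\CapLayer_J^2 N)_0=(N\cdot zE)_0$ has odd dimension.
\end{itemize}
Since $E\cdot E=0$ and $z^2=0$, we have $J^2=zE$, which is what makes the third requirement computable.

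For the core summand we use the three-dimensionality of $E$ through a cross product. Identify $E\cong\mathbb Q^3$ via $e_1,e_2,e_3$. Let $X\cong\mathbb Q^3$ sit in degree $2$, let $X z\oplus X\wedge E$ sit in degree $1$ (with $X\wedge E\cong\mathbb Q^3$ through $x\otimes e\mapsto x\times e$), and let $Y\cong\mathbb Q^3$ sit in degree $0$. The action is defined by
\[
x\cdot z:=xz,\qquad x\cdot e:=x\times e\in X\wedge E,\qquad (xz)\cdot e:=x\times e\in Y,\qquad (x\times e)\cdot z:=\pm\, x\times e\in Y,
\]
with all remaining products set to zero. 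We then check the module axioms, namely associativity together with $e_ie_j=0$, $z^2=0$, and the graded-commutativity signs of $ze=-ez$; this is routine.

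In this core summand, $x\mapsto x\times e$ has rank $2$ for each $e\ne0$. Consequently the images of $m_e$ and of $m_{ze}$ are two-dimensional, and the same holds for an arbitrary homogeneous $u=\lambda z+e$ or $u=ze$. The full depth-two layer, however, is
\[
\operatorname{span}\{x\times e\}=Y,
\]
which has dimension $3$, an odd number. This is the reason for taking three independent, isotropic $e_i$: isotropy guarantees $e_ie_j=0$, so that the $e_i$ generate the subalgebra $A_0$ with the stated relations.

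The main obstacle is parity bookkeeping: the core summand alone violates the first two requirements. For example, $m_z$ has image $Xz$ of dimension $3$ in degree $1$ and image $Y$ of dimension $3$ in degree $0$, and the graded pieces themselves have odd dimensions. We repair this by adding padding summands on which $E$ acts trivially, i.e.\ modules over $A_0/(E)\cong\mathbb Q[z]/(z^2)$, together with trivial one-dimensional modules, placed in suitable degrees:
\begin{itemize}
\item a copy of $\mathbb Q[z]/(z^2)$ with generator in degree $2$ and one with generator in degree $1$ make the two $m_z$-images even;
\item trivial summands $\mathbb Q$ in the appropriate degrees make every $\dim N_q$ even.
\end{itemize}
These summands contribute nothing to any $m_e$ or $m_{ze}$ image, and they contribute nothing to $\CapLayer^2_J$ because $zE$ kills them. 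We also handle mixed $u=\lambda z+e$ in degree $1$, whose image is a sum; we either verify directly that its rank is even, or replace $z$ by $2z$-type rescalings in the padding so that the rank stays even. Finally, we tabulate every graded dimension and conclude:
\begin{itemize}
\item $\mu_2(\mathcal Q_q,R)=0$ and $\mu_2((\operatorname{im} m_u)_q,R)=0$ for all $q$ and all homogeneous $u$;
\item $\mu_2\bigl((\CapLayer_J^2\mathcal Q)_0,R\bigr)\ge T/4>0$, because three copies of $[0,T)$ give $m(B,I)=m(B,I^{2c})=3\not\equiv0\pmod 2$ for short intervals $I$ near the middle of $[0,T)$.
\end{itemize}
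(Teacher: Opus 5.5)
Your construction is correct and is the paper's own: the paper's module with $X=Z=\mathbb Q^4$ is exactly your cross-product core plus your two $\mathbb Q[z]/(z^2)$ padding summands (generators in degrees $2$ and $1$), since the fourth row and column of its $C_b$ vanish. As in the paper, it takes $\mathcal Q=\mathbb Q_I\otimes Q$, $R=-\id$, and $J=A_0^{>0}$. This already gives graded dimensions $4,8,4$ and rank $4$ in both degrees for every $u=\lambda z+b$ with $\lambda\neq0$, so the extra trivial summands and the rescaling hedge are unnecessary; also, under the paper's enlargement convention $I^{2c}=[a-2c,b+2c]$ the spread of $[0,T)^{\oplus3}$ is $T/8$ rather than $T/4$, which is harmless since only positivity is needed.
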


\begin{proof}
Take $J=A_0^{>0}$ and fix a nonempty bounded interval $I\subset\mathbb R$.
Let $X=Z=\mathbb Q^4$.  For
$b=b_1e_1+b_2e_2+b_3e_3\in E$, define
\[
 C_b=
 \begin{pmatrix}
 0&-b_3& b_2&0\\
 b_3&0&-b_1&0\\
 -b_2&b_1&0&0\\
 0&0&0&0
 \end{pmatrix}:X\longrightarrow Z.
\]
Define a graded right $A_0$-module $Q$ by
\[
 Q_2=X,\qquad Q_1=X\oplus Z,\qquad Q_0=Z,
\]
with all other graded pieces zero, and by the actions
\[
 x\frown z=(x,0),\qquad x\frown b=(0,C_bx),
\]
for $x\in Q_2$, and
\[
 (x,y)\frown z=-y,\qquad (x,y)\frown b=C_bx
\]
for $(x,y)\in Q_1$.  The degree-two action is consequently
$x\frown(z\smilecup b)=C_bx$.  Set
\[
 \mathcal Q:=\mathbb Q_I\otimes Q,
 \qquad R:=-\operatorname{id}_{\mathcal Q}.
\]

The graded module identities follow from
$z^2=0$, $e_i\smilecup e_j=0$, and
$b\smilecup z=-z\smilecup b$.  Moreover,
$\operatorname{rank}C_b=2$ for every $0\ne b\in E$, whereas
\[
 \sum_{b\in E}\operatorname{im}C_b
 =\operatorname{span}_{\mathbb Q}\{f_1,f_2,f_3\}\subset Z
\]
has dimension three. It follows that the ordinary degreewise bar multiplicities are
\(4,8,4\). Moreover, every homogeneous single-operator image is a direct
sum of \(0\), \(2\), or \(4\) copies of the interval module \(I\).
Hence its barcode multiplicities are all even, and therefore its
multiplicity-sensitive \(2\)-spread vanishes.  On the
other hand,
\[
 (\CapLayer_J^2(Q))_0
 =\sum_{b\in E}\operatorname{im}C_b
\]
has dimension three, so the corresponding $(-1)$-eigenspace barcode is
$I^{\oplus3}$ and has positive multiplicity-sensitive spread.  The module
relations and all rank calculations are given in
\cref{app:formal-cyclic-strictness}.
\end{proof}

\begin{remark}[Geometric scope]
\label{rem:formal-cyclic-scope}
\Cref{thm:formal-cyclic-strictness} realizes the coefficient algebra inside
the ordinary cohomology of a closed symplectically aspherical manifold, but
it does not realize $\mathcal Q$ as a filtered Floer persistence module.  In
particular, it does not prove a strictly stronger Hofer-geometric obstruction.
A concrete realization would require an equivariant marked-point Floer
calculation in which the differential, loop rotation, all single cap
operators, and the higher image sum are controlled simultaneously.  This
remains open.
\end{remark}

\section{Comparison with persistent cup products and operator persistence}
\label{sec:comparison}

For a filtration $X_t\subseteq X_s$ (which is an example of a persistent space with transition maps given by inclusions), cohomology is contravariant and
\textit{persistent cup-length} applies ring length to
$\im(H^*(X_s)\to H^*(X_t))$; see~\cite{ContessotoMemoliStefanouZhou,MemoliStefanouZhou}.  Filtered Floer homology is covariant in the
action threshold, and persistent cap-length applies module length relative to
the fixed ambient ring $H^*(M)$.  The fixed-depth flags are parallel:
\[
 H^+(X)\supseteq(H^+(X))^2\supseteq\cdots,
 \qquad
 \CapLayer^1(V)\supseteq\CapLayer^2(V)\supseteq\cdots.
\]
The explicit definition \eqref{eq:explicit-cap-layer} is essential: the
right-hand flag consists of subspaces generated by iterated cap actions, not
of an unspecified multiplication internal to $V$.

A class $u\in H^{>0}(M)$ gives a single persistence operator
$m_u(x)=x\frown u$.  By \cref{lem:principal}, its image is precisely the
first layer associated with the principal ideal $(u)$.  Thus the
ideal-decorated package contains the operator persistence of
\cite{PolterovichShelukhinStojisavljevic}; the total ideal
$J=H^{>0}(M)$ additionally records all cap words at every depth.

\begin{remark}[Representative cocycles and fixed endpoints]
If a barcode interval is written $I=\langle b(I),d(I)\rangle$, then $b(I)$
and $d(I)$ are fixed endpoints of the fixed bar.  The brackets suppress only
whether the endpoints are open or closed.  In Proposition~2.15 of
\cite{MemoliStefanouZhou}, a representative cocycle is chosen at, or just
before, the fixed death endpoint and restricted backward.  In covariant
Floer persistence, coherent cycle representatives propagate forward from the
birth endpoint.  The variable parameter is the filtration value, not the
endpoint symbol $d(I)$.
\end{remark}

\appendix
\section{Coherent representatives}
\label{app:representatives}

The notion of the \textit{persistent cup-length
diagram} of a filtration was introduced by Contessoto-M\'{e}moli-Stefanou-Zhou~\cite{ContessotoMemoliStefanouZhou} to compute the persistent cup-length invariant. 
Similarly, one can see that the persistent cap-length invariant can be retrieved from the
persistent cap-length diagram as follows. 

Let $\cV$ be a constructible pointwise finite-dimensional persistence module
in graded $A$-modules.  Choose an interval decomposition.  For every bar
$I=\langle b(I),d(I)\rangle$, choose a coherent family
$\xi_{I,t}\in V_t$ for $t\in I$ such that
\[
 v_t^s(\xi_{I,t})=\xi_{I,s}
 \quad(t\le s,\ [t,s]\subset I),
\]
and such that the active $\xi_{I,t}$ form a basis of $V_t$.

For a word $\mathbf u=(u_1,\ldots,u_\ell)$ in $A^{>0}$, define
\[
 \supp_\xi(I;\mathbf u)=
 \{t\in I\mid
 \xi_{I,t}\frown u_1\frown\cdots\frown u_\ell\ne0\}.
\]

\begin{lemma}
The support is empty or an initial subinterval of $I$.
\end{lemma}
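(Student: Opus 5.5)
The idea is to show that the support is \emph{down-closed} within $I$: if $s\in\supp_\xi(I;\mathbf u)$ and $t\in I$ with $t\le s$, then $t\in\supp_\xi(I;\mathbf u)$. For an interval this already gives the claim, because a down-closed subset of an interval $I$ is either empty or an initial subinterval, namely a set of the form $\{t\in I\mid t<c\}$ or $\{t\in I\mid t\le c\}$.

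The key step is a single use of $A$-linearity. Fix $t\le s$ in $I$ with $[t,s]\subset I$; this holds automatically since $I$ is an interval. The structure map $v_t^s$ is right $A$-linear, so iterating the module identity $v_t^s(x\frown u)=v_t^s(x)\frown u$ gives
\[
 v_t^s\bigl(\xi_{I,t}\frown u_1\frown\cdots\frown u_\ell\bigr)
 =\xi_{I,s}\frown u_1\frown\cdots\frown u_\ell,
\]
using the coherence $v_t^s(\xi_{I,t})=\xi_{I,s}$. This is exactly the computation in \cref{lem:layer-functor}. If the right-hand side is nonzero, the left-hand argument cannot be zero, since a linear map sends $0$ to $0$. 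Hence $s\in\supp_\xi(I;\mathbf u)$ implies $t\in\supp_\xi(I;\mathbf u)$.

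To conclude, put $c:=\sup\supp_\xi(I;\mathbf u)$ when the support is nonempty. Down-closedness shows that every $t\in I$ with $t<c$ lies in the support, and $c$ itself lies in the support or not according to whether the supremum is attained. So the support is an initial subinterval of $I$ sharing the birth endpoint $b(I)$, with the same open or closed decoration at the left end. Constructibility says the support changes only at critical values, so $c$ lies in the finite critical set or equals $d(I)$, but the lemma itself does not need this.

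There is no real obstacle here. The only point needing care is that the argument uses the coherence of the chosen family $\xi_{I,\bullet}$ along $I$ and not its basis property. In particular, no claim is made that the cap word of $\xi_{I,t}$ stays inside the span of the $\xi_{I,\cdot}$ for one bar: the product may spread over other bars, and nonvanishing is tested in $V_t$ itself.
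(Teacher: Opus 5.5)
Your proof is correct and follows the paper's argument: the paper also combines $A$-linearity of the structure map with coherence of the chosen family to get $v_s^t(\xi_{I,s}\frown\mathbf u)=\xi_{I,t}\frown\mathbf u$. Hence nonvanishing at a later parameter forces nonvanishing at every earlier one in $I$. Your explicit step, that a down-closed subset of an interval is empty or an initial subinterval, is left implicit in the paper.
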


\begin{proof}
If $s\le t$ lie in $I$, then $A$-linearity gives
\[
 v_s^t(\xi_{I,s}\frown\mathbf u)
 =\xi_{I,t}\frown\mathbf u.
\]
Nonvanishing at $t$ therefore implies nonvanishing at every earlier $s$.
\end{proof}

Define the \textit{persistent cap-length diagram} of the interval $J\subseteq \R$ as 
\[
 \dgm_\xi^{\mathrm{cap}}(J)=
 \max\{\ell\ge1\mid
 J=\supp_\xi(I;u_1,\ldots,u_\ell)
 \text{ for some }I,u_i\in A^{>0}\},
\]
with value zero if no such data exist.

\begin{theorem}
For every interval $[a,b]$,
\[
 \PCL_{\cV}([a,b])=
 \max_{J\supseteq[a,b]}\dgm_\xi^{\mathrm{cap}}(J).
\]
\end{theorem}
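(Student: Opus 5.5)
We prove the two inequalities separately, starting from the recovery formula \eqref{eq:recover}: $\PCL_{\cV}([a,b])\ge\ell$ if and only if $\CapLayer^\ell(\im v_a^b)\ne0$. By \cref{lem:layer-functor} this is equivalent to some word $(u_1,\ldots,u_\ell)$ in $A^{>0}$ and some $x\in V_a$ satisfying $v_a^b(x\frown u_1\frown\cdots\frown u_\ell)\ne0$. Since the layer is spanned by images of pure words applied to elements of $V_a$, one word suffices. We then expand $x$ in the chosen coherent basis, $x=\sum_I\lambda_I\xi_{I,a}$, summing over bars $I\ni a$.

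For the inequality ``$\le$'': by $A$-linearity and coherence,
\[
v_a^b(x\frown\mathbf u)=\sum_{I\ni a}\lambda_I\, v_a^b(\xi_{I,a})\frown\mathbf u .
\]
Here $v_a^b(\xi_{I,a})=\xi_{I,b}$ when $b\in I$ and $=0$ otherwise, because a basis element leaves the module once its bar dies. So a nonzero value forces some bar $I\supseteq[a,b]$ with $\xi_{I,b}\frown\mathbf u\ne0$. By the preceding lemma, $J:=\supp_\xi(I;\mathbf u)$ is an initial subinterval of $I$ containing $b$, hence containing $[a,b]$. Thus $\dgm^{\mathrm{cap}}_\xi(J)\ge\ell$ for some $J\supseteq[a,b]$.

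For the inequality ``$\ge$'': suppose $J=\supp_\xi(I;\mathbf u)\supseteq[a,b]$ with $\ell$ letters. Then $\xi_{I,a}\frown\mathbf u$ is carried by $v_a^b$ to $\xi_{I,b}\frown\mathbf u$, which is nonzero because $b\in J$. Hence $\CapLayer^\ell(\im v_a^b)\ne0$ and $\PCL_{\cV}([a,b])\ge\ell$. Taking maxima on both sides gives the equality, with the convention that both sides are $0$ when no data exist.

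The main obstacle is the step ``$\le$''. We must justify that the pure word $x\frown\mathbf u$, with $x$ a mixed combination, survives to time $b$ only through a single coherent generator. The worry is cancellation or mixing: $\xi_{I,b}\frown\mathbf u$ need not be a basis vector. This does not matter, since we only need nonvanishing of one summand, and that follows because a sum of zero terms is zero.

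We also need that coherent families with $v$ killing dead generators exist, meaning the interval decomposition is compatible in that sense. This comes from \eqref{eq:interval-decomposition} applied degreewise, since the decomposition is only $\K$-linear and not $A$-linear. We will point out that the argument uses $A$-linearity solely through the identity $v(\xi\frown\mathbf u)=v(\xi)\frown\mathbf u$, which holds regardless of whether the basis respects the module structure.
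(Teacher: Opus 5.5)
Your proof is correct and follows essentially the same route as the paper. For the easy inequality, the paper likewise pushes $\xi_{I,a}\frown\mathbf u$ forward to the nonzero class $\xi_{I,b}\frown\mathbf u$. For the other inequality, the paper also expands a surviving class in the interval basis and picks a summand with nonzero image under the same cap word. There is one cosmetic difference. The paper expands directly at $b$, tacitly using that $\im v_a^b$ is spanned by the $\xi_{I,b}$ with $I\supseteq[a,b]$. You expand at $a$ and push forward, which makes that fact explicit.
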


\begin{proof}
A support containing $[a,b]$ supplies a class surviving from $a$ to $b$ with
a nonzero cap word.  Conversely, expand a class realizing
$\PCL_{\cV}([a,b])$ in the interval basis at $b$.  At least one basis summand
has nonzero image under the same cap word, and its support contains $[a,b]$.
\end{proof}

\section{The cap-cube calculation}
\label{app:formal-cyclic-strictness}

We verify the algebraic assertions used in
\cref{thm:formal-cyclic-strictness}.  Since $\Sigma_2$ and $\Sigma_3$ are
aspherical, so is $M_0=\Sigma_2\times\Sigma_3$; in particular, every product
area form is symplectically aspherical.  Choose the classes $e_1,e_2,e_3$ in
a Lagrangian subspace of $H^1(\Sigma_3;\mathbb Q)$ for the intersection
pairing.  Then $e_i\smilecup e_j=0$ for all $i,j$, while the K\"unneth theorem
shows that the classes $z\smilecup e_i$ are linearly independent.  Hence
\[
 A_0=\mathbb Q\cdot1\oplus\mathbb Qz\oplus E\oplus zE
\]
is a graded subalgebra of $H^*(M_0;\mathbb Q)$ and, for
$J=A_0^{>0}$,
\[
 J^2=zE,\qquad J^3=0.
\]

Let $f_1,f_2,f_3,f_4$ be the standard basis of $Z=\mathbb Q^4$.  For
$b=(b_1,b_2,b_3)\ne0$, the upper-left $3\times3$ block of $C_b$ is the matrix
of the map $v\mapsto b\times v$ where $\times$ is the cross product on $\mathbb{R}^3$.  Its kernel is the line $\mathbb Qb$, so
\[
 \operatorname{rank}C_b=2.
\]
In particular,
\[
 \begin{aligned}
 \operatorname{im}C_{e_1}&=\operatorname{span}\{f_2,f_3\},\\
 \operatorname{im}C_{e_2}&=\operatorname{span}\{f_1,f_3\},\\
 \operatorname{im}C_{e_3}&=\operatorname{span}\{f_1,f_2\}.
 \end{aligned}
\]
Consequently,
\[
 \sum_{b\in E}\operatorname{im}C_b
 =\operatorname{span}\{f_1,f_2,f_3\}
\]
has dimension three.

We next check the right $A_0$-module identities.  For $x\in Q_2$ and
$b,c\in E$,
\[
 (x\frown z)\frown z=0,
 \qquad
 (x\frown b)\frown c=0,
\]
as required by $z^2=0$ and $b\smilecup c=0$.  Moreover,
\[
 (x\frown z)\frown b=C_bx,
 \qquad
 (x\frown b)\frown z=-C_bx,
\]
which agrees with
$b\smilecup z=-z\smilecup b$.  Positive-degree classes act trivially on
$Q_0$, so these identities exhaust the nonzero products.  Thus $Q$ is a
graded right $A_0$-module.

It remains to calculate all homogeneous single-operator images.  If
$u\in A_0^1$, write
\[
 u=\lambda z+b,
 \qquad \lambda\in\mathbb Q,
 \quad b\in E.
\]
Then
\[
 m_u:Q_2\longrightarrow Q_1,
 \qquad x\longmapsto(\lambda x,C_bx),
\]
has rank four when $\lambda\ne0$, rank two when $\lambda=0$ and $b\ne0$,
and rank zero when $u=0$.  Similarly,
\[
 m_u:Q_1\longrightarrow Q_0,
 \qquad (x,y)\longmapsto C_bx-\lambda y,
\]
has rank four when $\lambda\ne0$, rank two when $\lambda=0$ and $b\ne0$,
and rank zero otherwise.  If $u\in A_0^2$, then
$u=z\smilecup b$ for a unique $b\in E$, and the only possibly nonzero map is
\[
 m_u:Q_2\longrightarrow Q_0,
 \qquad x\longmapsto C_bx,
\]
which has rank zero or two.  Thus every degreewise single-operator image has
even dimension.

For $\mathcal Q=\mathbb Q_I\otimes Q$ and $R=-\operatorname{id}$, the
$(-1)$-eigenspace is the whole module.  The ordinary degreewise barcodes are
\[
 I^{\oplus4},\qquad I^{\oplus8},\qquad I^{\oplus4},
\]
and the barcode of every degreewise homogeneous single-operator image is
$I^{\oplus r}$ with $r\in\{0,2,4\}$.  Hence their multiplicity-sensitive
$2$-spreads vanish.  By contrast,
\[
 (\CapLayer_J^2(\mathcal Q))_0
 =\mathbb Q_I\otimes
   \operatorname{span}\{f_1,f_2,f_3\},
\]
whose barcode is $I^{\oplus3}$.  Choose an interval $K$ whose closure is
contained in the interior of $I$.  For every sufficiently small $c>0$, both
$K$ and $K^{2c}$ are contained in $I$, and therefore
\[
 m(I^{\oplus3},K)=m(I^{\oplus3},K^{2c})=3\not\equiv0\pmod2.
\]
It follows directly from the definition that
\[
 \mu_2\bigl((\CapLayer_J^2(\mathcal Q))_0,R\bigr)>0.
\]


\begin{thebibliography}{99}

\bibitem{BauerLesnick}
U.~Bauer and M.~Lesnick,
Induced matchings and the algebraic stability of persistence
barcodes,
\emph{J. Comput. Geom.}
\textbf{6} (2015), no.~2, 162--191.

\bibitem{ChazalEtAl}
F.~Chazal, V.~de Silva, M.~Glisse, and S.~Oudot,
\emph{The Structure and Stability of Persistence Modules},
SpringerBriefs in Mathematics, Springer, 2016.

\bibitem{ContessotoMemoliStefanouZhou}
M.~Contessoto, F.~M\'emoli, A.~Stefanou, and L.~Zhou,
\emph{Persistent cup-length},
in: 38th International Symposium on Computational Geometry (SoCG 2022),
LIPIcs 224 (2022), Art. 31, 31:1--31:17.

\bibitem{CrawleyBoevey}
W.~Crawley-Boevey,
\emph{Decomposition of pointwise finite-dimensional persistence modules},
J. Algebra Appl. \textbf{14} (2015), no. 5, 1550066.

\bibitem{MemoliStefanouZhou}
F.~M\'emoli, A.~Stefanou, and L.~Zhou,
\emph{Persistent cup product structures and related invariants},
J. Appl. Comput. Topol. \textbf{8} (2024), 93--148.

\bibitem{EntovConjugacy}
M.~Entov,
\emph{$K$-area, Hofer metric and geometry of conjugacy classes in Lie groups},
Invent. Math. \textbf{146} (2001), no.~1, 93--141.

\bibitem{Floer}  A. Floer, \emph{Symplectic fixed points and holomorphic spheres}, Comm. Math. Phys., \textbf{120}
(1989), 575--611.

\bibitem{Go} W. Gong, Quantum-Decorated Floer Persistence. In progress. 

\bibitem{Ho} H. Hofer, On the topological properties of symplectic maps. {\it Proc. Roy. Soc. Edinburgh Sect. A } {\bf 115} (1990),  25--38.

\bibitem{HZ} H. Hofer, and E. Zehnder, \textit{Symplectic invariants and Hamiltonian dynamics}. Birkh\"{a}user Advanced Texts: Basler Lehrb\"{a}cher. Birkh\"{a}user Verlag, Basel, 1994. xiv+341 pp.

\bibitem{PatelGPD}
A.~Patel,
Generalized persistence diagrams,
\emph{J. Appl. Comput. Topol.}
\textbf{1} (2018), no.~3--4, 397--419.

\bibitem{PSS}
S.~Piunikhin, D.~Salamon, and M.~Schwarz,
\emph{Symplectic Floer--Donaldson theory and quantum cohomology},
in: Contact and Symplectic Geometry (Cambridge, 1994),
Publ. Newton Inst. 8, Cambridge Univ. Press, 1996, pp. 171--200.

\bibitem{Po}  L. Polterovich, {\it The Geometry of the Group of Symplectic Diffeomorphisms}, Lectures in Mathematics ETH Z\"{u}rich, Birkh\"{a}user Verlag, Basel, 2001.

\bibitem{PolterovichShelukhin}
L.~Polterovich and E.~Shelukhin,
\emph{Autonomous Hamiltonian flows, Hofer's geometry and persistence modules},
Selecta Math. (N.S.) \textbf{22} (2016), 227--296.

\bibitem{PolterovichShelukhinStojisavljevic}
L.~Polterovich, E.~Shelukhin, and V.~Stojisavljevi\'c,
\emph{Persistence modules with operators in Morse and Floer theory},
Mosc. Math. J. \textbf{17} (2017), no. 4, 757--786.

\bibitem{Schwarz}
M.~Schwarz,
\emph{On the action spectrum for closed symplectically aspherical manifolds},
Pacific J. Math. \textbf{193} (2000), no. 2, 419--461.

\bibitem{Usher}
M.~Usher,
\emph{Boundary depth in Floer theory and its applications to Hamiltonian dynamics and coisotropic submanifolds},
Israel J. Math. \textbf{184} (2011), 1--57.

\bibitem{UsherZhang}
M.~Usher and J.~Zhang,
\emph{Persistent homology and Floer--Novikov theory},
Geom. Topol. \textbf{20} (2016), 3333--3430.

\end{thebibliography}
\end{document}